\def\smallunderbrace#1{\mathop{\vtop{\m@th\ialign{##\crcr  
$\hfil\displaystyle{#1}\hfil$\crcr 
\noalign{\kern3\p@\nointerlineskip}%
\tiny\upbracefill\crcr\noalign{\kern3\p@}}}}\limits} 
\begin{document}
\title{Error analysis of finite difference/collocation method for the nonlinear coupled parabolic free boundary problem modeling plaque growth in the artery}
\author{F. Nasresfahani\and  M. R. Eslahchi  }
\institute{Mohammad Reza Eslahchi, Corresponding author \\
eslahchi@modares.ac.ir\at
Farzaneh nasresfahani  \\
 f.nasresfahani@modares.ac.ir\\
Department of Applied Mathematics, Faculty of Mathematical Sciences, Tarbiat Modares University, P.O. Box 14115-134 \\
Tehran, Iran
}
\date{}
\maketitle
\begin{abstract}
The main target of this paper is to present a new and efficient method to solve a nonlinear free boundary mathematical model of atherosclerosis. This model consists of three parabolics, one elliptic and one ordinary differential equations that are coupled together and describe the growth of a plaque in the artery. We start our discussion by using the front fixing method to fix the free domain and simplify the model by changing the mix
boundary condition to a Neumann one by applying suitable changes of variables. Then, after employing a nonclassical finite difference and the collocation method on this model, we prove the stability and convergence of methods. Finally, some numerical results are considered to show the efficiency of the method.
\end{abstract}
\keywords{Spectral collocation method, Finite difference method, Nonlinear parabolic equation, Free boundary problem, Mathematical model,
Atherosclerosis, Convergence and stability.}
\subclass{ 65M70, 65M12, 65M06, 35Q92, 35R35.}
\section{Introduction}
There are many phenomena that we have questions about and want to describe them or their behaviours. To find an answer we collect data and multiple sources of data support a rapid knowledge. However, our ability to analyze and interpret this data lags far behind data generation and storage capacity. The question that arises here is how can mathematics help us to solve this problem? The answer to this question is that mathematics helps us in this issue by modeling as described in Figure \ref{dra}\cite{meerschaert2013mathematical}.
The first recognizable models were numbers since about 30.000 BC \cite{schichl2004models}. The development of mathematical models continued in various fields. By the invention of calculus by Newton and Leibniz in 1671, differential equations came into existence \cite{newton1774methodus}, and then, partial differential equations stand out in 1719 by Nicolaus Bernoulli \cite{cajori1928early}. 
Early 16th century was the beginning of the modern interaction between mathematics and biology duo to the work of William Harvey \cite{harvey1737exercitatio}. Thereafter, until the 20th century, biological modeling continued to advance by the work of Hardy in 1908 in population genetics \cite{hardy1908mendelian}, Yula in 1925 in birth and death process \cite{yule1925growth}, Luria and Delbruck in 1943 in estimating bacterial mutation rates \cite{luria1943mutations}. The question that matters here is what biological issues  to biological scientists are more valuable and selected for modeling. In reply to the question, according to the authors' consideration, topics of the greatest importance to them are those that affect the human community more, which is the factor that causes a tendency to model  for a more detailed examination of these issues. Due to human society conditions, studying diseases is the most important area of study. According to the ICD 10\footnote{International Classification of Diseases}\cite{world2004international}, the most important diseases that cause of death in the world are HIV\cite{marinho2012model,culshaw2003mathematical}, Tumor \cite{rockne2009mathematical,owen1999mathematical,holmes2000mathematical,chaplain1995mathematical},  Cancer \cite{stephanou2006mathematical,deng2004mathematical,anderson2000mathematical,franks2003modelling}, Cardiovascular diseases (especially Atherosclerosis) \cite{yang2016mathematical,eberhard2006numerical,islam2016mathematical,calvez2009mathematical,calvez2010mathematical,hao2014ldl,2015mathematical} and Wound healing \cite{flegg2012wound,terry2012mathematical,paul2013mathematical,javierre2009mathematical} respectively.
As described above, the heart attack or stroke that happens because of atherosclerosis diseases is one of the third leading cause of death in the world \cite{world2004international}.  There are two various perspectives to research in this area, "modeling" and "numerical analysis" point of view. In the case of modeling, the important point to note here is that  there are different perspectives in studying Atherosclerosis \cite{seo2007mice,ougrinovskaia2010ode}. A very important perspective in the area of studying this disease is to study mathematical modeling in the forms of ordinary differential equation (ODE) and partial differential equation (PDE), that are distinguished from each other by the various factors such as choosing the region and the boundary, elements involved in biologically issues, boundary conditions and etc. For instance, in 2009  a 3-D nonlinear parabolic system of PDEs is considered as a mathematical model of Atherosclerosis involving the local blood flow dynamics by Calvez\cite{calvez2009mathematical}. In 2010 Calvez extended his previous work \cite{calvez2010mathematical}. In both articles, Calvez assumed the artery to be an irregular 3-D cylinder. Friedman in 2014, 2015 presented mathematical models of plaque growth in the artery, which parabolic nonlinear 2-D system of PDEs  with mixed boundary conditions and free boundary are considered for modeling \cite{friedman2015free,hao2014ldl}. It is worth noting that the difference between these models is that in \cite{friedman2015free} the artery is assumed to be a very long circular cylinder but in \cite{hao2014ldl} the artery is considered as an irregular cylinder. In other work, a mathematical model with the approach used in \cite{hao2014ldl} by including the effect of reverse cholesterol transport of plaque growth which includes the (LDL, HDL) concentrations is developed by Friedman \cite{2015mathematical}. There are many other models that are associated with this disease \cite{eberhard2006numerical,little2009model,yang2016mathematical}. In the case of numerical analysis, there are various mathematical methods and different perspectives in convergence and stability analyzing. For instance, in \cite{esmaili2018application} mathematical modeling of a tumor is considered and numerical results with convergence and stability of numerical methods have been presented. In \cite{esmaili2017optimal} mathematical modeling of optimal control of tumor with drug application has been presented and in \cite{esmaili2018application} it has been numerically solved and analyzed. Additionally, in \cite{zhao2017convergence}, a two-dimensional multi-term time fractional diffusion equation is solved numerically using a fully-discrete
schem and convergence and superconvergence of the method is illustrated. For the mathematical analysis point of view, in \cite{ramezani2008combined} a hyperbolic equation with an integral condition is solved using finite difference/spectral method.\\
In this article, we want to solve a free boundary nonlinear system of coupled PDEs that model Atherosclerosis and  consists of three parabolics, one elliptic and one ordinary differential equation which is introduced in \cite{friedman2015free}. For the readers' convenience, we highlight the main goals of this study as follows
\begin{itemize}
\item
We have fixed the domain using the front fixing method and  simplified the model by changing the mix boundary condition  to a Neumann one by applying a suitable change of variables to achieve more comfortable results for numerical analysis.
\item
Applying the finite difference method, we have constructed a sequence, which converges to the exact solution of coupled partial differential equations. 
\item
In each time step, using Taylor theorem,  the problem has changed to linear one (see  (\ref{fin1})-(\ref{fin2})) and using the collocation method, equations (\ref{fin1})-(\ref{fin2}) are solved numerically.
\item
We have proved constructed sequence converges to the exact solution of the problem (see Theorem \ref{convergetheo}) and also the stability of the method is proven (see Theorem \ref{stabtheo}).
\item
We have simulated the model using finite difference and collocation method for some pair of values $(L_0, H_0)$ to show the validity and efficiency of the presented method. It is critical to note that, construction of a new second-order non-classical discretization formula helps us to prove the convergence and stability Theorem appropriately.
\end{itemize}
\begin{figure}
\centering
\includegraphics[scale=.4]{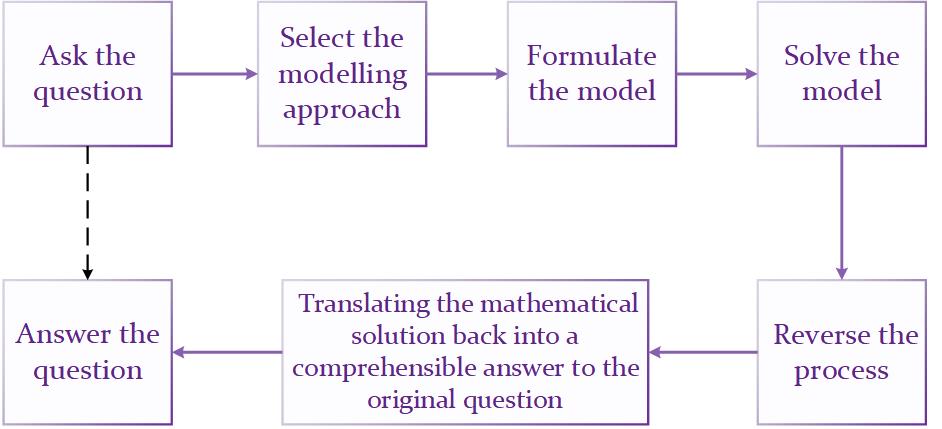}
\caption{\it{Seven steps to show the role of  mathematical modeling  in Solving the mathematical Problems \cite{meerschaert2013mathematical}.}}
\label{dra}
\end{figure}
We organize our paper as follows: In Section \ref{sec2}, we introduce the model of Atherosclerosis presented by Friedman in \cite{friedman2015free}. We apply some changes to construct a more appropriate model for numerical and proof purposes in Section \ref{reform}.
In Section \ref{sec3}, we use finite difference and collocation method with convenience basis for approximating the solution of the problem.
We discuss the stability and convergence of the method used for solving the model in Section \ref{sec4}.
Finally, by presenting some numerical  results, the theoretical statements are justified  in Section \ref{sec5}.
\section{Mathematical model}\label{sec2}
In this paper, we consider the following parabolic free boundary problem modeling plaque growth introduced in \cite{friedman2015free} as follows
\begin{eqnarray}\nonumber
&\dfrac{\partial{\widehat{L}}}{\partial t}-\Delta \widehat{L}=-k_1 \dfrac{(M0-\widehat{F})\widehat{L}}{K_1+\widehat{L}}-r_1\widehat{L},\,\, R(t)<r<1, \, t>0,&\\ 
&\dfrac{\partial \widehat{L}}{\partial n}+\alpha (\widehat{L}-L_0)=0,  \,\, \text{at}\,\, r=R(t), \, t>0 , \quad\dfrac{\partial \widehat{L}}{\partial n}=0,\,\, \text{at}\,\, r=1, \, t>0, \quad \widehat{L}(r ,0)=L_0,& \label{Lb}
\end{eqnarray}
\begin{eqnarray}\nonumber
&\dfrac{\partial \widehat{H}}{\partial t}-\Delta \widehat{H}=-k_2 \dfrac{\widehat{H}\widehat{F}}{K_2+\widehat{F}}-r_2\widehat{H},\,\, R(t)<r<1, \, t>0,&\\ 
&\dfrac{\partial \widehat{H}}{\partial n}+\alpha (\widehat{H}-H_0)=0 ,  \,\, \text{at}\,\, r=R(t), \, t>0,\quad\dfrac{\partial \widehat{H}}{\partial n}=0 ,  \,\, \text{at}\,\, r=1, \, t>0,\quad\widehat{H}(r ,0)=H_0,& \label{Hb}
 \end{eqnarray}
\begin{eqnarray}\nonumber
&\quad\dfrac{\partial \widehat{F}}{\partial t}-D\Delta \widehat{F}+\widehat{F}_r. v=k_1 \dfrac{(M_0-\widehat{F})\widehat{L}}{K1+\widehat{L}}-k_2\dfrac{\widehat{H}\widehat{F}}{K_2+\widehat{F}}-&\\\nonumber
&\lambda \dfrac {\widehat{F}(M_0-\widehat{F})\widehat{L}}{M_0(\delta +\widehat{H})}+\dfrac{\mu_1}{M_0}(M_0-\widehat{F})\widehat{F}-\dfrac{\mu_2}{M_0}\widehat{F}(M_0-\widehat{F}),\,\, R(t)<r<1, \, t>0,&\\
&\dfrac{\partial \widehat{F}}{\partial n}+\beta \widehat{F}=0 \quad ,  \,\, \text{at}\,\, r=R(t), \, t>0,\quad\dfrac{\partial \widehat{F}}{\partial n}=0\quad ,  \,\, \text{at}\,\, r=1, \, t>0,\quad F(r ,0)=0, & \label{Fb}
\end{eqnarray}
\begin{eqnarray}\nonumber
&M_0.v_r=\lambda\dfrac {(M_0-\widehat{F})\widehat{L}}{\delta +\widehat{H}}-\mu_1 (M_0-\widehat{F})-\mu_2 \widehat{F} ,\,\, R(t)<r<1, \, t>0, &\\ \label{vb}
&v(r,t)=0 ,  \,\, \text{at}\,\, r=1, \, t>0,& 
\end{eqnarray}
\begin{eqnarray}\nonumber
&\dfrac{dR(t)}{dt}=v(R(t),t), \, t>0,&\\
&R(0)=\epsilon . & \label{etf}
\end{eqnarray}
There are several mathematical models that describe the growth of a plaque in the artery.
All these models recognize the critical role of the “bad” cholesterols and the “good” cholesterols in determining whether a plaque, once formed, will grow or shrink.\\
In mathematical models, choosing the type of geometry of the problem is very important. Since the plaques grown in the artery are approximately spherically symmetric, in most of the models, it is assumed that the plaque grows radially-symmetric. In this model, it is assumed that the artery is a very long circular cylinder and  a circular cross-section $0\leq r\leq 1$ is considered. The plaque is given by
$R(t) < r < 1$, where $r$ is measured in unit of cm, and $t$ is measured in unit of days. Also, The variables $\widehat{L}$, $\widehat{H}$ and $\widehat{F}$ are taken to be functions of $(r,t)$ only in the region $\{(r,t); R(t) < r < 1, t > 0\}$. In this model, $\widehat{L}$, $\widehat{H}$ and $\widehat{F}$ are the variables that illustrate the concentration of LDL and HDL and the density of foam cell in the plaque respectively and $v$ is the radial velocity. $k_1$ is the rate of ox-LDL ingestion by macrophages. $k_2$ is the rate of reverse cholesterol transport. $r_1$ and $r_2$ 
represent the degradation of the LDL and HDL caused by radicals respectively. $\mu_1$ and $\mu_2$ are the death rate of macrophages and foam cells respectively. Also, D is the diffusion coefficients of foam cells.
 Initially, in order to better understand the model, it is better to know a little how the plaque is calcified in the artery.
 \begin{figure}[!ht]
\centering
\subfloat[{\tiny Intravascular contents}]{\includegraphics[width=3.75cm]{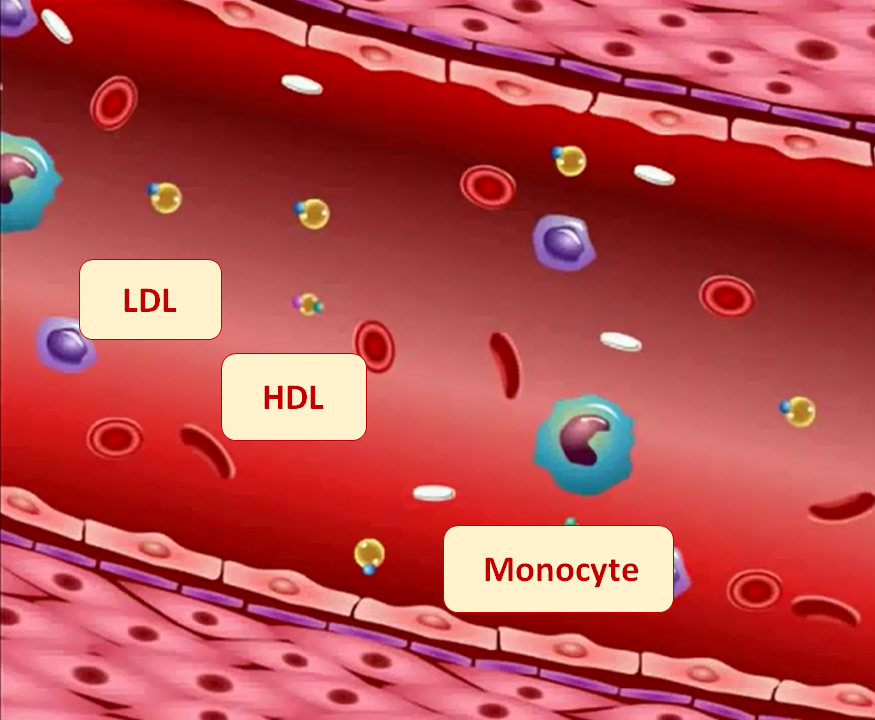}}
\quad
\subfloat[{\tiny LDL penetration}]{\includegraphics[width=3.75cm]{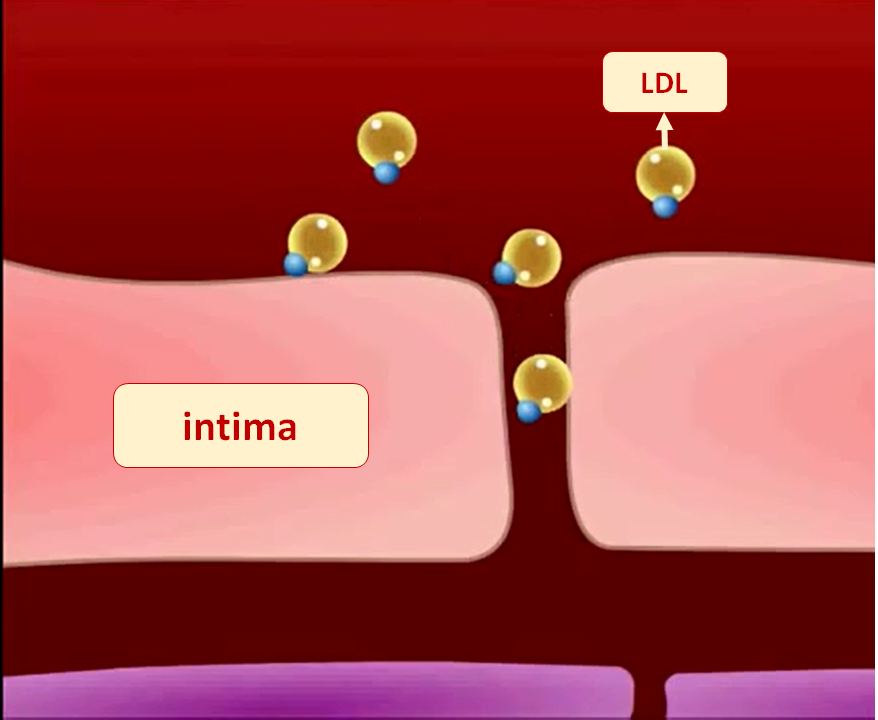}}
\quad
\subfloat[{\tiny FR release}]{\includegraphics[width=3.75cm]{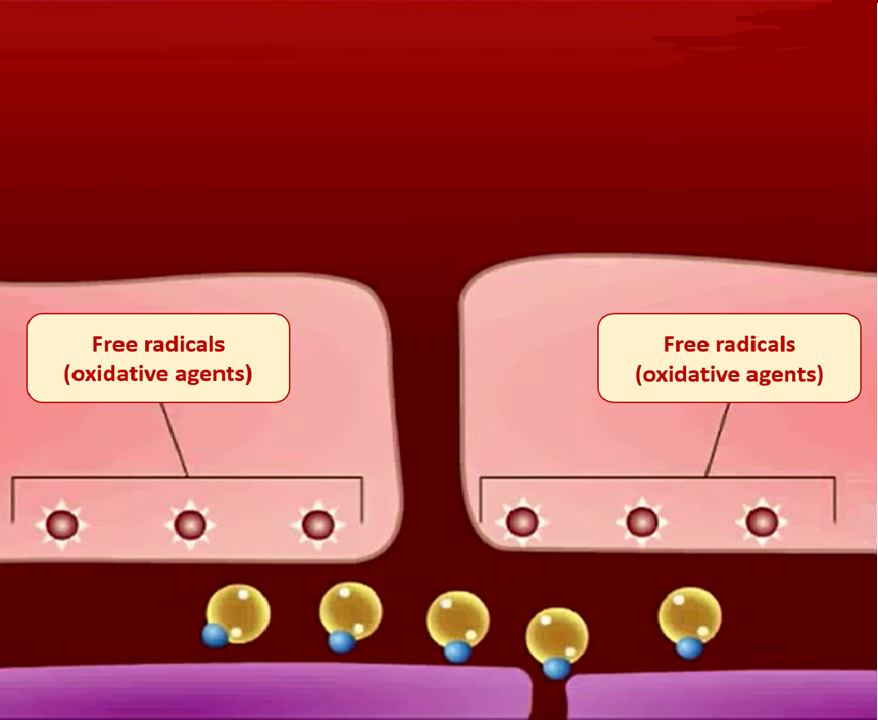}}
\quad
\subfloat[{\tiny L$_{ox}$ formation}]{\includegraphics[width=3.75cm]{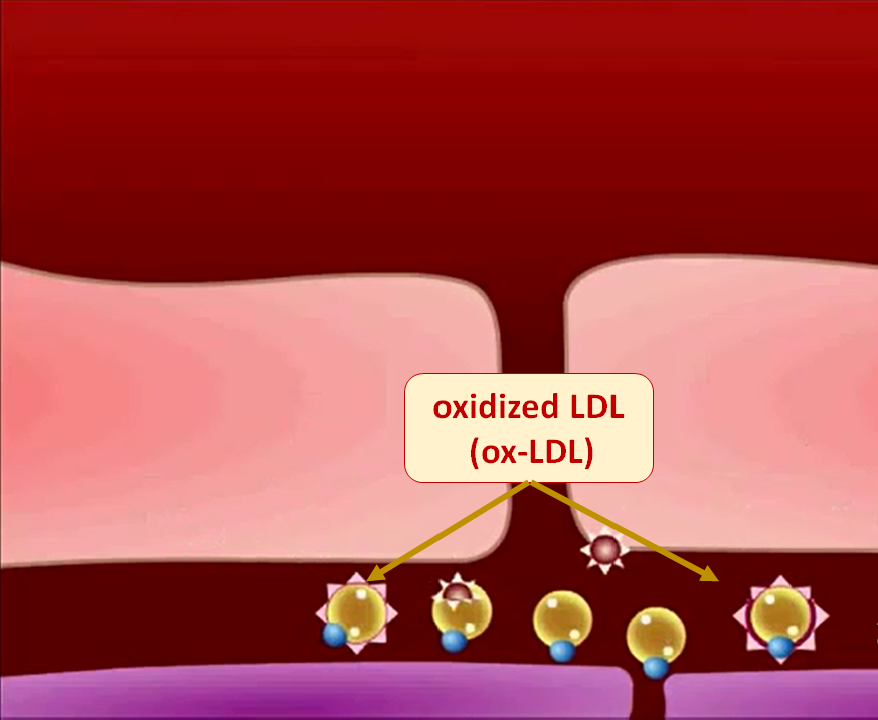}}
\quad
\subfloat[{\tiny M secretion }]{\includegraphics[width=3.75cm]{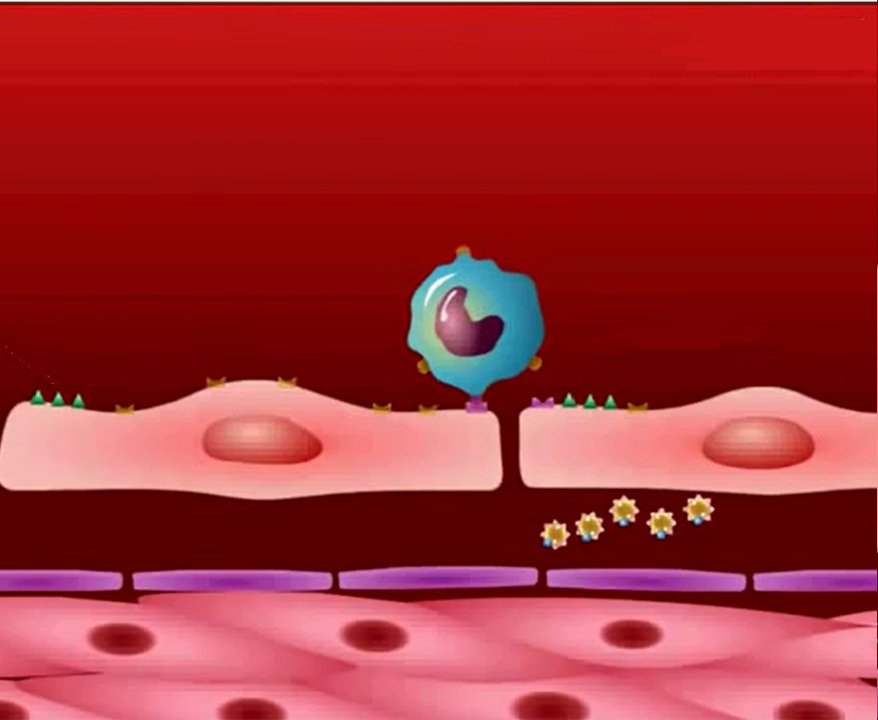}}
\quad
\subfloat[{\tiny M penetration} ]{\includegraphics[width=3.75cm]{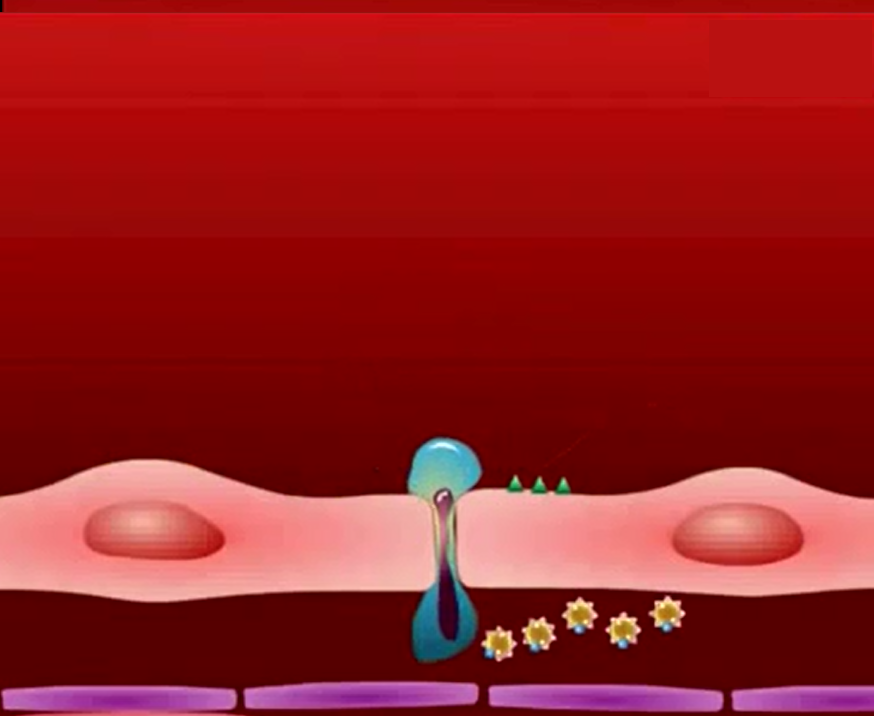}}
\quad
\subfloat[{\tiny  ingestion of L$_{ox}$ by M}]{\includegraphics[width=3.75cm]{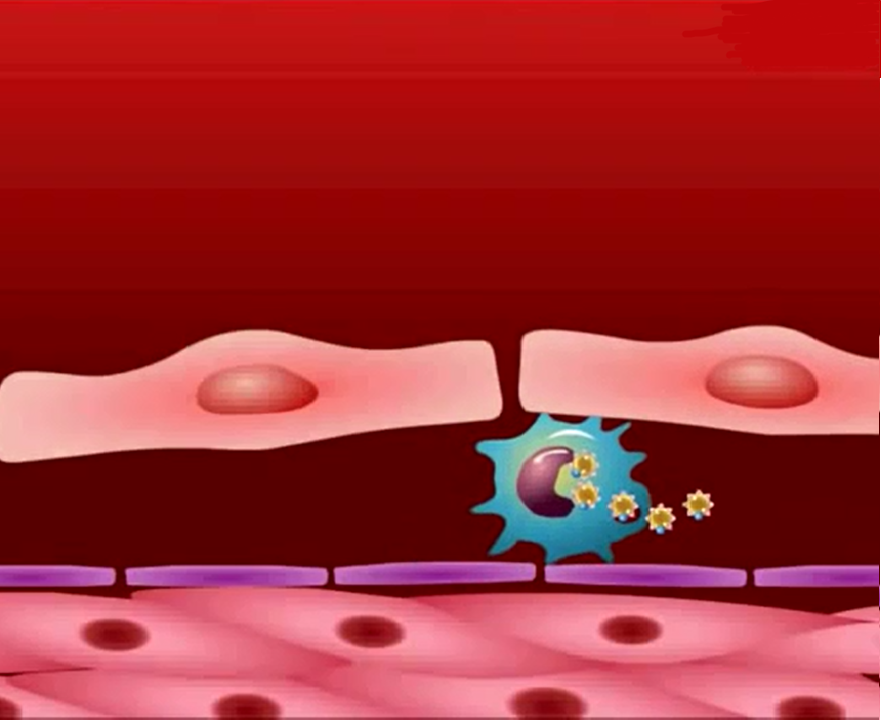}}
\quad
\subfloat[{\tiny F formation} ]{\includegraphics[width=3.75cm]{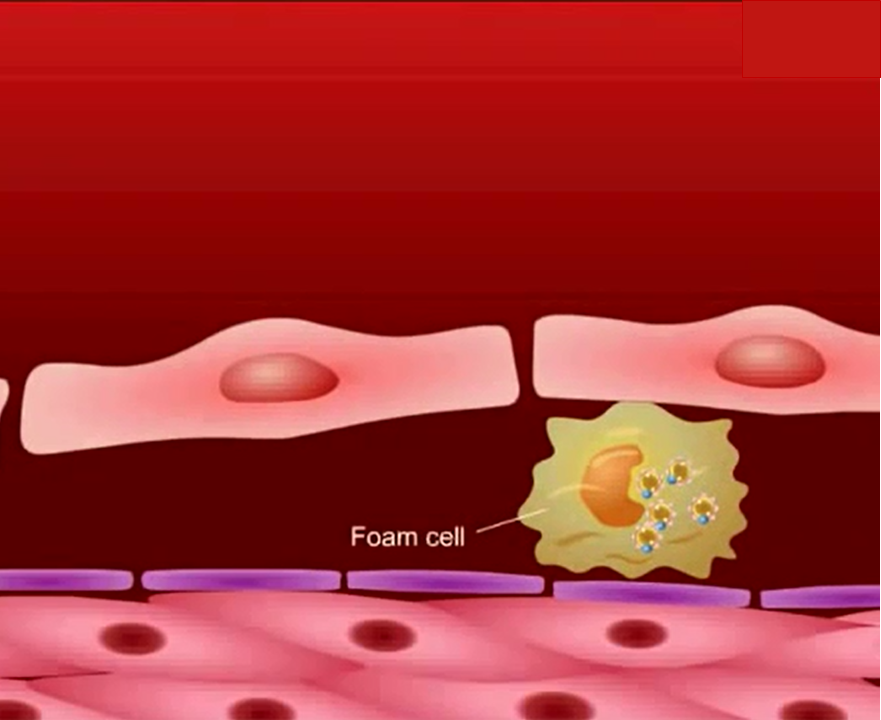}}
\quad
\subfloat[{\tiny Plaque formation} ]{\includegraphics[width=3.75cm]{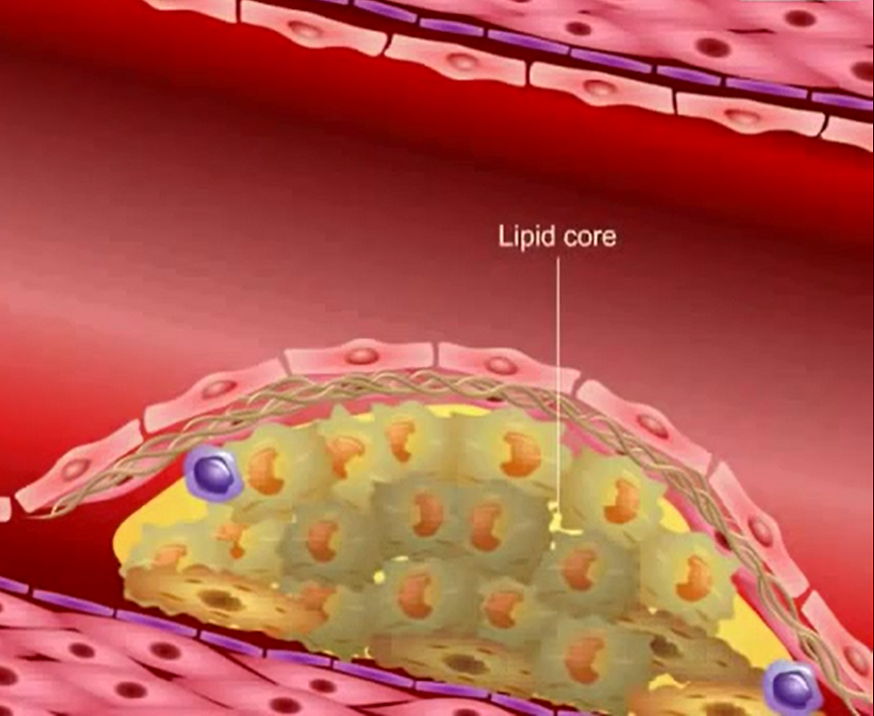}}
\caption{\it{The process of plaque development \cite{youttt}.}}
\label{f1}
\end{figure}
A plaque contains low density lipoprotein (LDL) or bad cholesterol,
 high-density lipoprotein (HDL) or good cholesterol, macrophages and foam cells (Figure \ref{f1}.a).
 The process of plaque development begins with a lesion in the
endothelial layer, penetration of low density lipoproteins in the intima (Figure \ref{f1}.b) and by Free radicals in the initma (Figure \ref{f1}.c) becoming oxidized LDL  (Figure \ref{f1}.d) and it is presented in the second term of (\ref{Lb}). Notice that  in this model $\widehat{L}$ (and $\widehat{H}$) and its oxidized form $\widehat{L}_{ox}$ (and $\widehat{H}_{ox}$) are merged.
Free radicals are oxidative agents continuously released by biochemical reactions within the body, including the intima \cite{singh2015role,bankson1993role}.
Endothelial cells, sensing the presence of ox-LDL, are activated and trigger 
monocyte chemoattractant protein, which triggers
recruitment of monocytes into the intima (Figure \ref{f1}.e) \cite{bankson1993role,gerrity1981role}. After entering the
intima, monocytes differentiate into macrophages  (M) (Figure \ref{f1}.f). The ingestion of large amounts of
ox-LDL (Figure \ref{f1}.g), that shown in the first term of (\ref{Lb}), transforms the fatty macrophages into the foam cells (Figure \ref{f1}.h) \cite{bentzon2014mechanisms}.
Newly formed foam cells secrete chemokines which attract more macrophages, 
and the plaque is gradually calcified (Figure \ref{f1}.i).
 At the same time that LDL enters the intima, high-density lipoprotein also enters the intima and becomes
oxidized by free radicals (FR) as presented by the second term of (\ref{Hb}). However, oxidized HDL ($H_{ox}$)
is not ingested by macrophages. HDL helps to prevent Atherosclerosis by removing cholesterol from foam cells, and by the limiting
inflammatory processes that underline Atherosclerosis, as shown in the first term of (\ref{Hb}). Furthermore, HDL takes up free radicals that are otherwise available
to LDL {\cite{hao2014ldl}}.
As the plaque continues to grow (Figure \ref{f1}.i), the increased shear force may cause rupture of
the plaque, possibly resulting in the formation of a thrombus (blood
clot)  and heart attack (for more information about the model see \cite{friedman2015free}).\\
For simplicity, we consider the model from cylindrical coordinates (\ref{Lb})-(\ref{etf}) to spherical coordinates as follows
\begin{eqnarray}\nonumber
&\dfrac{\partial \widehat{L}}{\partial t}-\dfrac{1}{r^2}\dfrac{\partial}{\partial r}(r^2\dfrac{\partial\widehat{L}}{\partial r})=-k_1 \dfrac{(M_0-\widehat{F})\widehat{L}}{K_1+\widehat{L}}-r_1\widehat{L},\,\, R(t)<r <1, \, t >0,&\\
&\dfrac{\partial \widehat{L}}{\partial r}+\alpha (\widehat{L}-L_0)=0\,\,\text{at}\,\, r =R(t), \, t>0,\quad \dfrac{\partial \widehat{L}}{\partial r}=0\,\,\text{at}\,\, r =1, \, t >0,\quad \widehat{L}(r,0)=L_0,&\label{e1}
\end{eqnarray}
\begin{eqnarray} \nonumber
&\dfrac{\partial \widehat{H}}{\partial t}-\dfrac{1}{r^2}\dfrac{\partial}{\partial r}(r^2\dfrac{\partial \widehat{H}}{\partial r})=-k_2 \dfrac{\widehat{H}\widehat{F}}{K_2+\widehat{F}}-r_2\widehat{H},\,\, R(t)<r <1, \, t >0,&\\
&\dfrac{\partial \widehat{H}}{\partial r}+\alpha (\widehat{H}-H_0)=0 \,\,\text{at}\,\, r=R(t), \, t >0,\quad \dfrac{\partial \widehat{H}}{\partial r}=0 \,\,\text{at}\,\, r =1, \, t >0,\quad \widehat{H}(r,0)=H_0,&\label{e2}
\end{eqnarray}
\begin{eqnarray} \nonumber
&\quad\dfrac{\partial \widehat{F}}{\partial t}-\dfrac{D}{r^2}\dfrac{\partial}{\partial r}(r^2\dfrac{\partial\widehat{F}}{\partial r})+\widehat{F}_r.v=k_1 \dfrac{(M_0-\widehat{F})\widehat{L}}{K1+\widehat{L}}-k_2\dfrac{\widehat{H}\widehat{F}}{K_2+\widehat{F}}-&\\ \nonumber
&\lambda \dfrac {\widehat{F}(M_0-\widehat{F})\widehat{L}}{M_0(\delta +\widehat{H})}+\dfrac{\mu_1}{M_0}(M_0-
\widehat{F})\widehat{F}-\dfrac{\mu_2}{M_0}\widehat{F}(M_0-\widehat{F}),\,\, R(t)<r <1, \, t >0,&\\
&\dfrac{\partial \widehat{F}}{\partial r}+\beta \widehat{F}=0 \,\,\text{at}\,\, R(t)<r <1, \, t >0,\quad \dfrac{\partial \widehat{F}}{\partial r}=0 \,\,\text{at}\,\, r =1, \, t >0,\quad \widehat{F}(r,0)=0,&
\end{eqnarray}
\begin{eqnarray}  \nonumber
&M_0.v_{r}=\lambda\dfrac{ (M_0-\widehat{F})\widehat{L}}{\delta +\widehat{H}}-\mu_1 (M_0-\widehat{F})-\mu_2 \widehat{F},\,\, R(t)<r <1, \, t >0,&\\
&v(r,t)=0 \,\,\text{at}\,\, r =1, \, t >0,&
 \end{eqnarray}
\begin{eqnarray} \nonumber
&\dfrac{d R(t)}{d t}=v(R(t),t), \, t>0,&\\
&R (0)=\epsilon .& \label{e3}
\end{eqnarray}
In the next section, we want to present a new reformulation of the model presented in (\ref{e1})-(\ref{e3}).
\section{A new reformulation of the model}\label{reform}
Most of the mathematical models that originate from biological phenomena have some special features. Free or moving boundary condition, mix (robin) boundary condition and many other features of a mathematical model can cause some difficulties in applying classical numerical methods on them and researchers overcome these difficulties by applying suitable techniques. Since the model of Atherosclerosis (\ref{e1})-{\ref{e3}) is a free boundary model with mix boundary condition, to solve this model numerically, we need to remove the mentioned difficulties. To reach this aim, we must consider a new reformulation of this model for which the free boundary change to a fixe one and the robin (mix) boundary condition change to a Neumann one.
\subsection{{Fixing the domain}}
Due to the fact that the plaque grows radially symmetric with free boundary, using the front fixing method by the following variable changes
\[\rho=\dfrac{r-R(t)}{1-R(t)},\quad\widehat{L}(\rho ,t)=\widehat{L}(r,t),\,\, \widehat{H}(\rho , t)=\widehat{H}(r,t), \,\, \widehat{F}(\rho , t)=\widehat{F}(r,t),\]
the free boundary problem (\ref{e1})-(\ref{e3})  is transformed into a problem with a fixed domain 
\[\{(\rho , t) \mid 0< \rho < 1 , t \geq 0\},\]
and the model becomes as follows
 \begin{eqnarray}\nonumber
&\dfrac{\partial \widehat{L}}{\partial t}-\dfrac{1}{(1-R(t))^2}\dfrac{\partial^2 \widehat{L}}{\partial \rho^2}+\left( \dfrac{-2}{\rho (1-R(t))^2+R(t)(1-R(t))}+ \dfrac{v(0,t)(\rho -1)}{(1-R(t))^2}\right) \dfrac{\partial \widehat{L}}{\partial \rho}=f^{\widehat{L}}(\widehat{L},\widehat{F}),&\\
&\dfrac{\partial \widehat{L}}{\partial \rho}+\alpha (1-R(t))(\widehat{L}-L_0)=0\,\,\text{at}\,\, \rho =0, \, t >0,\quad \dfrac{\partial \widehat{L}}{\partial \rho}=0\,\,\text{at}\,\, \rho =1, \, t >0,\quad \widehat{L}(\rho,0)=L_0,&\label{eqf1}
\end{eqnarray}
\begin{eqnarray}\nonumber
& \dfrac{\partial \widehat{H}}{\partial t}-\dfrac{1}{(1-R(t))^2}\dfrac{\partial^2 \widehat{H}}{\partial \rho^2}+\left( \dfrac{-2}{\rho (1-R(t))^2+R(t)(1-R(t))}+ \dfrac{v(0,t)(\rho -1)}{(1-R(t))^2}\right) \dfrac{\partial \widehat{H}}{\partial \rho}=f^{\widehat{H}}(\widehat{H},\widehat{F}),&\\
&\dfrac{\partial \widehat{H}}{\partial \rho}+\alpha(1-R(t))(\widehat{H}-H_0)=0 \,\,\text{at}\,\, \rho =0, \, t >0,\quad \dfrac{\partial\widehat{H}}{\partial \rho}=0 \,\,\text{at}\,\, \rho =1, \, t >0,\quad \widehat{H}(\rho,0)=H_0, &\label{eqfh}
\end{eqnarray}
\begin{eqnarray}\nonumber
 &\dfrac{\partial \widehat{F}}{\partial t}-\dfrac{D}{(1-R(t))^2}\dfrac{\partial^2 \widehat{F}}{\partial \rho^2}+&\\\nonumber
 &\left( \dfrac{-2D}{\rho (1-R(t))^2+R(t)(1-R(t))}+\dfrac{v}{1-R(t)}+ \dfrac{v(0,t)(\rho -1)}{(1-R(t))^2}\right) \dfrac{\partial \widehat{F}}{\partial \rho}=f^{\widehat{F}}(\widehat{L},\widehat{H},\widehat{F}),&\\
&\dfrac{\partial \widehat{F}}{\partial\rho}+\beta (1-R(t)) \widehat{F}=0 \,\,\text{at}\,\, \rho =0, \, t >0,\quad  \dfrac{\partial \widehat{F}}{\partial \rho}=0 \,\,\text{at}\,\, \rho =1, \, t >0,\quad \widehat{F}(\rho,0)=0,&\label{eqf2}
\end{eqnarray}
\begin{eqnarray}\nonumber
 &\dfrac{1}{1-R(t)}\dfrac{\partial v}{\partial \rho}=f^v(\widehat{L},\widehat{H},\widehat{F}),\\
&v(\rho,t)=0 \,\,\text{at}\,\, \rho =1, \, t >0,\label{eqfr}
\end{eqnarray}
\begin{eqnarray}\nonumber
& \dfrac{d R}{ d t}=v(0,t), \, t>0,\\ \label{RR}
&R (0)=\epsilon. \label{eqfe}
 \end{eqnarray}
 \subsection{changing the boundary condition}
To obtain more comfortable results for numerical analysis, without loss of generality, we can change the mixed boundary condition of the model to a Neumann one by applying suitable variable changes as follows
 \begin{eqnarray}
&&L(\rho,t):=\exp (-\alpha (1-R(t))\dfrac{(1-\rho)^2}{2})(\widehat{L}(\rho,t)-L_0),\\ \label{vc1}
&&H(\rho,t):=\exp (-\alpha (1-R(t))\dfrac{(1-\rho)^2}{2})(\widehat{H}(\rho,t)-H_0),\\\label{vc2}
&&F(\rho,t):=\exp (-\beta (1-R(t))\dfrac{(1-\rho)^2}{2})\widehat{F}(\rho,t).\label{vc3}
\end{eqnarray}
So, from (\ref{eqf1})-(\ref{RR}) we have
 \begin{eqnarray}\nonumber
&\dfrac{\partial {L}}{\partial t}-\dfrac{1}{(1-R(t))^2}\dfrac{\partial^2 {L}}{\partial \rho^2}+&\\\nonumber
&\left( \dfrac{-2}{\rho (1-R(t))^2+R(t)(1-R(t))}+ \dfrac{v(0,t)(\rho -1)}{1-R(t)}+\dfrac{2(1-\rho)\alpha}{1-R(t)}\right) \dfrac{\partial {L}}{\partial \rho}=f^L({L},{F}),&\\
&\dfrac{\partial {L}}{\partial \rho}=0\,\,\text{at}\,\, \rho =0, \, t >0,\quad \dfrac{\partial {L}}{\partial \rho}=0\,\,\text{at}\,\, \rho =1, \, t >0,\quad {L}(\rho,0)=0,&\label{fin1}
\end{eqnarray}
\begin{eqnarray}\nonumber
& \dfrac{\partial {H}}{\partial t}-\dfrac{1}{(1-R(t))^2}\dfrac{\partial^2 {H}}{\partial \rho^2}+&\\\nonumber
&\left( \dfrac{-2}{\rho (1-R(t))^2+R(t)(1-R(t))}+ \dfrac{v(0,t)(\rho -1)}{1-R(t)}+\dfrac{2(1-\rho)\alpha}{1-R(t)}\right) \dfrac{\partial {H}}{\partial \rho}=f^H({H},{F}),&\\
&\dfrac{\partial{H}}{\partial \rho}=0 \,\,\text{at}\,\, \rho =0, \, t >0,\quad \dfrac{\partial{H}}{\partial \rho}=0 \,\,\text{at}\,\, \rho =1, \, t >0,\quad {H}(\rho,0)=0, &
\end{eqnarray}
\begin{eqnarray}\nonumber
 &\dfrac{\partial{F}}{\partial t}-\dfrac{D}{(1-R(t))^2}\dfrac{\partial^2 {F}}{\partial \rho^2}+&\\\nonumber
 &\left( \dfrac{-2D}{\rho (1-R(t))^2+R(t)(1-R(t))}+\dfrac{v}{1-R(t)}+ \dfrac{v(0,t)(\rho -1)}{1-R(t)}+\dfrac{2D(1-\rho)\beta}{1-R(t)}\right) \dfrac{\partial{F}}{\partial \rho}=f^F({L},{H},{F})&\\
&\dfrac{\partial {F}}{\partial \rho}=0 \,\,\text{at}\,\, \rho =0, \, t >0,\quad\dfrac{\partial {F}}{\partial \rho}=0 \,\,\text{at}\,\, \rho =1, \, t >0,\quad{F}(\rho,0)=0,&\label{Fba}
\end{eqnarray}
\begin{eqnarray}\nonumber
 &\dfrac{1}{1-R(t)}\dfrac{\partial v}{\partial \rho}=f^v({L},{H},{F}),&\\
&v(\rho,t)=0 \,\,\text{at}\,\, \rho =1, \, t >0,&
\end{eqnarray}
\begin{eqnarray}\nonumber
& \dfrac{d R}{ d t}=v(0,t), \, t>0,&\\ 
&R (0)=\epsilon . &\label{fin2}
 \end{eqnarray}
\section{Approximating the solution of the problem}\label{sec3}
In this section, we approximate the solution of the problem (\ref{fin1})-(\ref{fin2})  for $0<\rho<1$ and $0<t<T$.  Let  $t_i :=ih\,\, (i=0,1,\ldots, M)$ be mesh points, where $h:=\dfrac{T}{M}$ is the  time step and $M$ is a positive integer.  The problem is solved using the finite difference-collocation method. In order to prove the stability and convergence of the method, we need to construct a non-classical discretization of second-order to approximate the time derivative.
 So, let us consider  the following discretization formula for approximating the time derivative for a given function $u(\rho,t)$
\begin{equation}\label{timed}
\dfrac{\partial u}{\partial t}(\rho,t_{n+1})=\dfrac{u_{n+1}-u_n+\dfrac{u_{n-1}-u_n}{3}}{\dfrac{2h}{3}} +E_t,
\end{equation}
and the following approximation formula for linearizing the equations
\begin{equation}
u(\rho ,t_{n+1})=2u(\rho,t_n)-u(\rho,t_{n-1})+E^u,
\end{equation}
where
\begin{equation}\label{err}
\max\{\Vert E_t\Vert_{\infty},\Vert E^u\Vert_{\infty}\} <C{h}^2,
\end{equation}
and $C$ is a positive constant. In the following, we have assumed that 
$$u_n(\rho)=u(\rho,t_n).$$
Using finite difference method based on the approximation formula given by (\ref{timed}) and equations (\ref{eqf1})-(\ref{eqf2}) we get 
 \begin{eqnarray}\nonumber
& L_{n+1}-L_n+\dfrac{L_{n-1}-L_n}{3}-\dfrac{2h}{3(1-R_{n+1})^2}\dfrac{\partial^2 L_{n+1}}{\partial \rho^2}+&\\ \nonumber
&\dfrac{2h}{3}\left( \dfrac{-2}{\rho (1-R_{n+1})^2+R_{n+1}(1-R_{n+1})}+ \dfrac{(2v_{n}(0)-v_{n-1}(0))(\rho -1)}{1-R_{n+1}}+\dfrac{2(1-\rho)\alpha}{1-R_{n+1}}\right) \dfrac{\partial L_{n+1}}{\partial \rho}=&\\ \nonumber
&\dfrac{2h}{3}(2f^L(L_n,F_n)-f^L(L_{n-1},F_{n-1}))-{\dfrac{2h}{3}}E_t^L,&\\ \nonumber
&\dfrac{\partial L_{n+1}}{\partial \rho}=0\,\,\text{at}\,\, \rho =0, \, t >0,\quad \dfrac{\partial L_{n+1}}{\partial \rho}=0\,\,\text{at}\,\, \rho =1, \, t >0,\quad L_0(\rho)=0,&
\end{eqnarray}
\begin{eqnarray}\nonumber
& H_{n+1}-H_n+\dfrac{H_{n-1}-H_{n}}{3}-\dfrac{2h}{3(1-R_{n+1})^2}\dfrac{\partial^2 H_{n+1}}{\partial \rho^2}+&\\\nonumber
&\dfrac{2h}{3}\left( \dfrac{-2}{\rho (1-R_{n+1})^2+R_{n+1}(1-R_{n+1})}+ \dfrac{(2v_{n}(0)-v_{n-1}(0))(\rho -1)}{1-R_{n+1}}+\dfrac{2(1-\rho)\alpha}{1-R_{n+1}}\right) \dfrac{\partial H_{n+1}}{\partial \rho}=&\\\nonumber
&\dfrac{2h}{3}(2f^H(H_n,F_n)-f^H(H_{n-1},F_{n-1}))-{\dfrac{2h}{3}}E_t^H,&\\\nonumber
&\dfrac{\partial H_{n+1}}{\partial \rho}=0 \,\,\text{at}\,\, \rho =0, \, t >0,\quad \dfrac{\partial H_{n+1}}{\partial \rho}=0 \,\,\text{at}\,\, \rho =1, \, t >0,\quad H_0(\rho)=0, &
\end{eqnarray}
\begin{eqnarray}\nonumber
 & F_{n+1}-F_n+\dfrac{F_{n-1}-F_n}{3}-\dfrac{2hD}{3(1-R_{n+1})^2}\dfrac{\partial^2 F_{n+1}}{\partial \rho^2}+&\\\nonumber
 &\dfrac{2h}{3}\left( \dfrac{-2D}{\rho (1-R_{n+1})^2+R_{n+1}(1-R_{n+1})}+\dfrac{(2v_{n}-v_{n-1})}{1-R_{n+1}}\right. +&\\\nonumber
 &\left. \dfrac{(2v_{n}(0)-v_{n-1}(0))(\rho -1)}{1-R_{n+1}}+\dfrac{2D(1-\rho)\beta}{1-R_{n+1}}\right) \dfrac{\partial F_{n+1}}{\partial \rho}=&\\\nonumber
 &\dfrac{2h}{3}(2f^F(L_n,H_n,F_n)-f^F(L_{n-1},H_{n-1},F_{n-1}))-{\dfrac{2h}{3}}E^F_t,&\\
&\dfrac{\partial F_{n+1}}{\partial \rho}=0 \,\,\text{at}\,\, \rho =0, \, t >0,\quad \dfrac{\partial F_{n+1}}{\partial \rho}=0 \,\,\text{at}\,\, \rho =1, \, t >0,\quad F_0(\rho)=0,&
\end{eqnarray}
\begin{eqnarray}\nonumber
 &\dfrac{1}{1-R_{n}}\dfrac{\partial v_n}{\partial \rho}=f^v(L_n,H_n,F_n),&\\
&v(\rho,t)=0 \,\,\text{at}\,\, \rho =1, \, t >0,&
 \end{eqnarray}
where $E_t^u$ is obtained by merging the errors of $E_t$ and $E^u$.
 From (\ref{err}), there exists positive $C^*$ such that
 \begin{equation}
\max\{\Vert E^L_t\Vert_{\infty} ,\Vert E^H_t\Vert_{\infty},\Vert E^F_t\Vert_{\infty}\}<C^*{h}^2,
\end{equation}
and using (\ref{RR}) we have
\begin{eqnarray}
R_{n+1}=R_{n}-\dfrac{R_{n-1}-R_n}{h}+2hv_n(0)-{\dfrac{2h}{3}}E^R_t.
\end{eqnarray}
Then, we conclude that there exists positive constant $C^{**}$ such that 
 \begin{equation}
max\{\Vert E^L_t\Vert_{\infty} ,\Vert E^H_t\Vert_{\infty},\Vert E^F_t\Vert_{\infty},\Vert E^R_t\Vert_{\infty}\}<C^{**} {h}^2.
\end{equation}
In the rest of this paper, the solution of the problem  (\ref{e1})-(\ref{e2}) is denoted by  $(L_{n+1}^{ap},H_{n+1}^{ap},F_{n+1}^{ap})$, which is the approximated solution of the following problem
 \begin{eqnarray}\nonumber
& L_{n+1}-\dfrac{h^*}{(1-R_{n+1}^{ap})^2}\dfrac{\partial^2 L_{n+1}}{\partial \rho^2}+&\\ \nonumber 
&{h^*}\left( \dfrac{-2}{\rho (1-R_{n+1}^{ap})^2+R_{n+1}^{ap}(1-R_{n+1}^{ap})}+ \dfrac{(2v_{n}^{ap}(0)-v_{n-1}^{ap}(0))(\rho -1)}{1-R_{n+1}^{ap}}+\dfrac{2(1-\rho)\alpha}{1-R_{n+1}^{ap}}\right) \dfrac{\partial L_{n+1}}{\partial \rho}=&\\ \nonumber
&h^*(2f^L(L_n^{ap},F_n^{ap})-f^L(L_{n-1}^{ap},F_{n-1}^{ap}))+L_n^{ap}-\dfrac{L^{ap}_{n-1}-L_n^{ap}}{3},&\\
&\dfrac{\partial L_{n+1}}{\partial \rho}=0\,\,\text{at}\,\, \rho =0, \, t >0,\quad\dfrac{\partial L_{n+1}}{\partial \rho}=0\,\,\text{at}\,\, \rho =1, \, t >0,\quad L_0(\rho)=0,&\label{apL}
\end{eqnarray}
\begin{eqnarray}\nonumber
& H_{n+1}-\dfrac{h^*}{(1-R_{n+1}^{ap})^2}\dfrac{\partial^2 H_{n+1}}{\partial \rho^2}+&\\\nonumber
&h^*\left( \dfrac{-2}{\rho (1-R_{n+1}^{ap})^2+R_{n+1}^{ap}(1-R_{n+1}^{ap})}+ \dfrac{(2v_{n}^{ap}(0)-v_{n-1}^{ap}(0))(\rho -1)}{1-R_{n+1}}+\dfrac{2(1-\rho)\alpha}{1-R_{n+1}^{ap}}\right) \dfrac{\partial H_{n+1}}{\partial \rho}=&\\\nonumber
&h^*(2f^H(H_n^{ap},F_n^{ap})-f^H(H_{n-1}^{ap},F_{n-1}^{ap}))+H_n^{ap}-\dfrac{H^{ap}_{n-1}-H_n^{ap}}{3},&\\
&\dfrac{\partial H_{n+1}}{\partial \rho}=0 \,\,\text{at}\,\, \rho =0, \, t >0,\quad \dfrac{\partial H_{n+1}}{\partial \rho}=0 \,\,\text{at}\,\, \rho =1, \, t >0,\quad H_0(\rho)=0, &\label{apH}
\end{eqnarray}
\begin{eqnarray}\nonumber
 & F_{n+1}-\dfrac{h^*D}{(1-R_{n+1}^{ap})^2}\dfrac{\partial^2 F_{n+1}}{\partial \rho^2}+&\\\nonumber
 &h^*\left( \dfrac{-2D}{\rho (1-R_{n+1}^{ap})^2+R_{n+1}^{ap}(1-R_{n+1}^{ap})}+\dfrac{(2v_{n}^{ap}-v_{n-1}^{ap})}{1-R_{n+1}^{ap}}\right. &\\\nonumber
 &\left. +\dfrac{(2v_{n}^{ap}(0)-v_{n-1}^{ap}(0))(\rho -1)}{1-R_{n+1}^{ap}}+\dfrac{2D(1-\rho)\beta}{1-R_{n+1}^{ap}}\right) \dfrac{\partial F_{n+1}}{\partial \rho}=&\\\nonumber
 &h^*(2f^F(L_n^{ap},H_n^{ap},F_n^{ap})-f^F(L_{n-1}^{ap},H_{n-1}^{ap},F_{n-1}^{ap}))+F_n^{ap}-\dfrac{F_{n-1}^{ap}-F_n^{ap}}{3},&\\
&\dfrac{\partial F_{n+1}}{\partial \rho}=0 \,\,\text{at}\,\, \rho =0, \, t >0,\quad \dfrac{\partial F_{n+1}}{\partial \rho}=0 \,\,\text{at}\,\, \rho =1, \, t >0,\quad F_0(\rho)=0,&\label{apF}
\end{eqnarray}
\begin{eqnarray}\nonumber
 &\dfrac{1}{1-R_{n+1}^{ap}}\dfrac{\partial v_n^{ap}}{\partial \rho}=f^v(L_n^{ap},H_n^{ap},F_n^{ap}),&\\
&v(\rho,t)=0 \,\,\text{at}\,\, \rho =1, \, t >0,&\label{apv}
 \end{eqnarray}
 \begin{eqnarray}
R_{n+1}^{ap}=R_{n}^{ap}-\dfrac{R_{n-1}^{ap}-R_{n}^{ap}}{3}+h^*v_n^{ap}(0),\label{apr}
\end{eqnarray}
where $h^*=\dfrac{2h}{3}$ and $(L_{n+1}^{ap},H_{n+1}^{ap},F_{n+1}^{ap})$ is obtained as an approximation of $(L_{n+1},H_{n+1},F_{n+1})$ by solving (\ref{apL})-(\ref{apr}) employing the collocation method.
To implement this method, we employ $\{p_i(\rho)\}_{i=0}^{\infty}$ as trial functions as follows
\begin{equation}\label{formbasis}
\text{span}\left\lbrace p_0(\rho),p_1(\rho),...,p_k(\rho)\right\rbrace =\{u\in \text{span}\{1,\rho,\rho^2,...,\rho^{k+2}\};  \dfrac{\partial u}{\partial \rho}\vert_{\rho=0}=0,\dfrac{\partial u}{\partial \rho}\vert_{\rho=1}=0\}.
\end{equation}
Then, we approximate $F_{n+1}$ by $F_{n+1}^N$ defined as follows
\begin{equation}
F_{n+1}^N(\rho)=\sum_{i=0}^{N} a_i^{n+1} p_i(\rho).
\end{equation}
Now, we consider the following equation
\begin{eqnarray}\label{pi-in}
 & \Pi_N^{0,0}F_{n+1}^N-\dfrac{h^*D}{(1-R_{n+1}^{ap})^2}\Pi_N^{0,0}\dfrac{\partial^2 F_{n+1}^N}{\partial \rho^2}+h^*\Pi_N^{0,0}(G(\rho)\dfrac{\partial F_{n+1}^N}{\partial \rho})=I_N^{0,0}g_n^*,&
\end{eqnarray}
where 
\begin{equation}\label{gns}
g_n^*=\Pi_N^{0,0}\left(F_n^{ap}-\dfrac{F_{n-1}^{ap}-F_n^{ap}}{3}\right)+\underbrace{h^*(2f^F(L_n^{ap},H_n^{ap},F_n^{ap})-f^F(L_{n-1}^{ap},H_{n-1}^{ap},F_{n-1}^{ap}))}_{f_n^*},
\end{equation}
and
\[
G(\rho)= \dfrac{-2D}{\rho (1-R_{n+1}^{ap})^2+R_{n+1}^{ap}(1-R_{n+1}^{ap})}+
\]
\begin{equation}\label{grho}
\dfrac{(2v_{n}^{ap}-v_{n-1}^{ap})}{1-R_{n+1}^{ap}} +\dfrac{(2v_{n}^{ap}(0)-v_{n-1}^{ap}(0))(\rho -1)}{1-R_{n+1}^{ap}}+\dfrac{2D(1-\rho)\beta}{1-R_{n+1}^{ap}},
\end{equation}
where $\Pi_N^{0,0}$ and $I_N^{0,0}$ are the orthogonal projection and Jacobi-Gauss-Lobatto interpolation operator with respect to $\rho$ on $[0,1]$, respectively. 
The approximation of the solution of (\ref{apL}) and (\ref{apH}) can be obtained by $L_{n+1}^N$ and $H_{n+1}^N$ in a similar manner which is used for $F_{n+1}^N$.
\section{Convergence and stability}\label{sec4}
As we are aware, the first and most important step in numerical analysis is to prove the stability and convergence of the proposed numerical method. To do so, we need to introduce some mathematical preliminaries.
\begin{definition}\label{defs}
\cite{canuto2006spectral} Suppose $\Omega=(a,b)^d,~d\in\Bbb N$ and $L^2_{w^{\alpha,\beta}} (\Omega)$ is the space of square integrable functions in $\Omega$. Now, we can define the following inner product on $L^2_{w^{\alpha,\beta}} (\Omega)$
\begin{equation*}
(u,v)_{w^{\alpha,\beta},\Omega}=\int_{\Omega} w^{\alpha,\beta}(X)u(X)v(X) dX,~~ \forall u,v\in L^2_{w^{\alpha,\beta}(\Omega)},
\end{equation*}
\begin{equation*}
\Vert u\Vert_{w^{\alpha,\beta},\Omega}=\left( \int_{\Omega} w^{\alpha,\beta}(X)(u(X))^2 dX\right) ^{\frac{1}{2}}, ~~\forall u\in L^2_{w^{\alpha,\beta}(\Omega)}.
\end{equation*}
\end{definition}
\begin{definition}\label{defp}
\cite{canuto2006spectral} Suppose ${\Bbb P_N}^d$ is the space of all $d$ dimensional algebraic polynomials of degree at most $N$ in each  variable. $\Pi_{N,w^{\alpha,\beta}}:L^2_{w^{\alpha,\beta}}(\Omega)\longrightarrow {\Bbb P}_N^d$ is an orthogonal projection if and only if for any $u\in L^2_{w^{\alpha,\beta}(\Omega)}$, we have
\begin{equation*}
\int_{\Omega} (\Pi_{N,w^{\alpha,\beta}} u(X)-u(X))v(X)w^{\alpha,\beta}(X) dX =0, ~~\forall v\in{\Bbb P}_N^d.
\end{equation*}
\end{definition}
\begin{theorem}\label{tha}
\cite{shen2011spectral} Let $\alpha ,\beta>-1$. For any $u\in B^m_{\alpha , \beta} (I)$, 
\begin{equation*}
\Vert \Pi_N^{\alpha , \alpha} u-u\Vert_{l,w^{\alpha ,\alpha}} \leq N^{2l-m-1/2} \Vert \partial_x^m u\Vert_{\alpha+m,\alpha+m}.
\end{equation*}
\end{theorem}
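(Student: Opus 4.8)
\noindent\emph{Proof strategy.} The plan is to reduce the estimate to a mode-by-mode computation with Jacobi coefficients, exploiting that $\Pi_N^{\alpha,\alpha}$ is diagonalised by the Jacobi basis. First I would expand $u$ in the Jacobi series $u=\sum_{k\ge 0}\hat u_k P_k^{\alpha,\alpha}$, where $\{P_k^{\alpha,\alpha}\}$ are the Jacobi polynomials orthogonal with respect to $w^{\alpha,\alpha}$ on $I$; by Definition \ref{defp} the projection is then the truncation $\Pi_N^{\alpha,\alpha}u=\sum_{k\le N}\hat u_k P_k^{\alpha,\alpha}$, so that $u-\Pi_N^{\alpha,\alpha}u=\sum_{k>N}\hat u_k P_k^{\alpha,\alpha}$. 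Everything will come down to controlling, for each $k>N$, how large $\Vert\partial_x^j(\hat u_k P_k^{\alpha,\alpha})\Vert_{w^{\alpha,\alpha}}$ ($0\le j\le l$) can be relative to the corresponding contribution of $\hat u_k$ to $\Vert\partial_x^m u\Vert_{\alpha+m,\alpha+m}$.

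The next ingredient is the differentiation identity for Jacobi polynomials, $\partial_x^j P_k^{\alpha,\alpha}=c_{k,j}\,P_{k-j}^{\alpha+j,\alpha+j}$ for $k\ge j$, where $c_{k,j}$ is an explicit ratio of Gamma functions of size $c_{k,j}\sim k^{j}$. Applied termwise this gives $\partial_x^j(u-\Pi_N^{\alpha,\alpha}u)=\sum_{k>N}\hat u_k c_{k,j}P_{k-j}^{\alpha+j,\alpha+j}$ and, likewise, $\partial_x^m u=\sum_{k\ge m}\hat u_k c_{k,m}P_{k-m}^{\alpha+m,\alpha+m}$. For the right-hand side the polynomials $P_{k-m}^{\alpha+m,\alpha+m}$ are orthogonal in exactly the weight $w^{\alpha+m,\alpha+m}$ in which the norm is taken, so $\Vert\partial_x^m u\Vert_{\alpha+m,\alpha+m}^2=\sum_{k\ge m}|\hat u_k|^2 c_{k,m}^2\,\gamma_{k-m}^{\alpha+m}$ with $\gamma_n^{\mu}=\Vert P_n^{\mu,\mu}\Vert_{w^{\mu,\mu}}^2$. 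On the left-hand side, however, the derivative $\partial_x^j P_k^{\alpha,\alpha}$ is measured in the unshifted weight $w^{\alpha,\alpha}$ rather than the natural $w^{\alpha+j,\alpha+j}$, so one has to estimate $\Vert P_{k-j}^{\alpha+j,\alpha+j}\Vert_{w^{\alpha,\alpha}}^2$ — a Jacobi polynomial integrated against a non-matching, heavier weight. This quantity is controlled by the endpoint growth of Jacobi polynomials (equivalently, by a Markov--Bernstein-type inverse inequality), and it exceeds $\gamma_{k-j}^{\alpha+j}$ by a polynomial-in-$k$ factor; this loss is precisely what accounts for the exponent $2l$ rather than the ideal $l$ in the statement.

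It then suffices to prove a uniform per-mode bound of the form
\[
\frac{\sum_{j\le l}c_{k,j}^2\,\Vert P_{k-j}^{\alpha+j,\alpha+j}\Vert_{w^{\alpha,\alpha}}^2}{c_{k,m}^2\,\gamma_{k-m}^{\alpha+m}}\;\le\; c\,N^{2(2l-m-1/2)}\qquad(k>N),
\]
and this estimate is the main technical obstacle. The constants $c_{k,j}$, the norms $\gamma_n^{\mu}$, and the norms $\Vert P_{k-j}^{\alpha+j,\alpha+j}\Vert_{w^{\alpha,\alpha}}^2$ are all expressible through Gamma functions and the known asymptotics of Jacobi polynomials near $\pm1$; invoking $\Gamma(k+a)/\Gamma(k+b)\le c\,k^{a-b}$ (uniformly for $k$ bounded below) collapses the ratio to a power of $k$, whose exponent one checks is maximised over $0\le j\le l$ at $j=l$ and equals $2(2l-m-1/2)$. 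Since $k>N$ and this exponent is nonnegative under the hypothesis relating $l$ and $m$, $k$ may be replaced by $N$, the finitely many low-degree modes being handled directly. Summing the per-mode inequality against $\sum_{k\ge m}|\hat u_k|^2 c_{k,m}^2\gamma_{k-m}^{\alpha+m}\le\Vert\partial_x^m u\Vert_{\alpha+m,\alpha+m}^2$ and taking square roots gives the claim. As this is a classical result of Jacobi spectral approximation theory, it is invoked in the present paper directly from \cite{shen2011spectral} without reproducing the proof.
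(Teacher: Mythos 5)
The paper does not actually prove Theorem \ref{tha}: it is quoted as a known approximation result from \cite{shen2011spectral}, so there is no in-paper argument to compare yours against; your sketch has to be judged on its own. Its general template is the right one (Jacobi expansion, truncation, the derivative identity $\partial_x^j P_k^{\alpha,\alpha}=c_{k,j}P_{k-j}^{\alpha+j,\alpha+j}$, Gamma-function asymptotics), and your bookkeeping is essentially correct: the per-mode ratio you isolate does behave like $k^{2(2l-m-1/2)}$. The gap is in how you dispose of it. You write that since $k>N$ and the exponent is nonnegative, ``$k$ may be replaced by $N$''; the inequality runs the other way. For $k>N$ and a nonnegative exponent $e$ one has $k^{e}\ge N^{e}$, so the supremum of the per-mode ratio over the tail is not attained at $k=N$ --- when $2l>m+1/2$ it is infinite. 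Since the only information retained about the tail coefficients is $\sum_{k}|\hat u_k|^2 c_{k,m}^2\gamma_{k-m}^{\alpha+m}<\infty$, with no decay rate, no purely termwise argument can produce the factor $N^{2l-m-1/2}$ in that regime: a single high mode $u=P_K^{\alpha,\alpha}$ with $K\gg N$ already violates your displayed per-mode inequality at $j=l$. Your argument closes only when $m\ge 2l$, where the exponent is nonpositive. (The same example shows that the theorem as transcribed here is itself imprecise --- it silently needs $1\le l\le m$, a constant $c$, and in the regime $m<2l$ more than the shifted seminorm on the right --- but that is the paper's transcription, not your error.)

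The missing idea is that the estimate in the \emph{unshifted} weight is not proved mode by mode. In \cite{shen2011spectral} (going back to Canuto--Quarteroni) one first establishes the optimal estimate $\Vert\partial_x^l(\Pi_N^{\alpha,\alpha}u-u)\Vert_{w^{\alpha+l,\alpha+l}}\lesssim N^{l-m}\Vert\partial_x^m u\Vert_{w^{\alpha+m,\alpha+m}}$ in the weight matched to the derivative order --- there the termwise argument is clean, because $P_{k-l}^{\alpha+l,\alpha+l}$ is orthogonal in exactly that weight --- and then passes to $w^{\alpha,\alpha}$ by inserting an auxiliary $H^l$-type projection and invoking an inverse (Markov--Bernstein) inequality on ${\Bbb P}_N$. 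The inverse inequality is what converts the loss from $k^{2l}$, which is uncontrolled over the tail, into $N^{2l}$, because the difference of the two projections is a polynomial of degree at most $N$; that is precisely the structural information your tail-by-tail estimate discards. If you reorganize the proof as ``optimal shifted estimate, plus triangle inequality through an auxiliary projection, plus inverse inequality,'' your Gamma-function computations become the ingredients of the first step rather than the whole argument.
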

\subsection{Convergence}
In this section, we want to prove the convergence of the above method under the title of Convergence Theorem. For this purpose, using the principle of mathematical induction, we need to show that for $k=0,1,\ldots, M$, (for an arbitrary $M$) there exist positive constants $L^*$, $H^*$, $F^*$ and $R^*$ such that 
\begin{equation}\label{este1}
\vert L_k^{ap}-L_k\vert <L^*,~~\vert H_k^{ap}-H_k\vert <H^*,~~\vert F_k^{ap}-F_k\vert <F^*,~~\vert R_k^{ap}-R_k\vert <R^*,
\end{equation} 
where $L_k$, $H_k$, $F_k$ and $R_k$ are the exact solutions of (\ref{apL})-(\ref{apr}) in $t=t_k$, respectively.
First, we assume that
\begin{equation}\label{este2}
\vert L_k^{ap}-L_k\vert <L^*,~~\vert H_k^{ap}-H_k\vert <H^*,~~\vert F_k^{ap}-F_k\vert <F^*,~~\vert R_k^{ap}-R_k\vert <R^*\quad 1\leq k \leq n<M.
\end{equation}
In the following, to prove the convergence theorem, we need to present the following lemma.
\begin{lemma}\label{lema1}
Let $F$ be the exact solution of (\ref{Fba}) on $[0,1]\times [0,T]$, $F_{n+1}^{ap}=F_{n+1}^{N}, ~\forall n\geq 1$ and $\dfrac{\partial^2 F}{\partial \rho^2}$ be  $C^1$-smooth function. Then, for each  $0< \rho< 1$, there exist positive constants $c_1$, $c_2$ and $c$ such that
\[
(\dfrac{1}{2}-c_1h^*) \|\dfrac{\partial(F_{n+1}^N-F_{n+1})}{\partial \rho}\|_{w^{0,0}}^2\leq\dfrac{1}{2} \|\dfrac{\partial(F_{n}^{ap}-F_{n})}{\partial \rho}\|_{w^{0,0}}^2+
\]
\[
\sum_{i=0}^n h^*c_2\left(\|L_i-L_{i}^{ap}\|_{w^{0,0}}^2+\|H_{i}-H_{i}^{ap}\|_{w^{0,0}}^2 +\|F_{i}-F_{i}^{ap}\|_{w^{0,0}}^2\right)+\|E_F^*\|_{\infty}^2+h^*K_1(N),
\]
where $K(N)$ is the error generated by the spectral method and
\begin{equation}\label{errff}
\|E_F^*\|_{\infty}\leq c(h^*)^2, ~~ \lim_{N\rightarrow\infty} K_1(N)=0.
\end{equation}
\end{lemma}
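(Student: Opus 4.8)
The plan is to derive an energy estimate for the error function $e_{n+1} := F_{n+1}^{N}-F_{n+1}$ by subtracting the continuous semi-discrete equation (\ref{apF}) (written at step $n+1$ for the exact-in-space solution $F_{n+1}$) from the collocation equation (\ref{pi-in}), then testing the resulting identity against $-\partial^2 e_{n+1}/\partial\rho^2$ in the $w^{0,0}$ inner product on $[0,1]$. First I would split the error into a projection part and a truncation part: write $F_{n+1}^{N}-F_{n+1} = (F_{n+1}^{N}-\Pi_N^{0,0}F_{n+1}) + (\Pi_N^{0,0}F_{n+1}-F_{n+1})$, control the second summand by Theorem \ref{tha} (this is the source of the $h^*K_1(N)$ term with $\lim_{N\to\infty}K_1(N)=0$), and work with the first summand, which lies in the polynomial space so that the projection and interpolation operators act transparently. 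The time-stepping truncation terms $E_F^*$ (the merged $E_t^F$, $E^u$ errors) contribute the $\|E_F^*\|_\infty^2$ term and are bounded by $c(h^*)^2$ via (\ref{err}).

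Next I would integrate by parts in the diffusion term: testing $-\frac{h^*D}{(1-R_{n+1}^{ap})^2}\partial_\rho^2 e_{n+1}$ against $-\partial_\rho^2 e_{n+1}$ produces $\frac{h^*D}{(1-R_{n+1}^{ap})^2}\|\partial_\rho^2 e_{n+1}\|_{w^{0,0}}^2$ (a good, sign-definite term), while testing the zeroth-order term $e_{n+1}$ against $-\partial_\rho^2 e_{n+1}$ and integrating by parts using the homogeneous Neumann boundary conditions in (\ref{apF}) gives $\|\partial_\rho e_{n+1}\|_{w^{0,0}}^2$, which is the left-hand side of the claimed inequality. The convective term $h^*G(\rho)\partial_\rho e_{n+1}$ tested against $-\partial_\rho^2 e_{n+1}$ is handled by Cauchy–Schwarz and Young's inequality, absorbing a small multiple of $\|\partial_\rho^2 e_{n+1}\|_{w^{0,0}}^2$ into the good diffusion term and leaving a $c_1 h^*\|\partial_\rho e_{n+1}\|_{w^{0,0}}^2$ contribution on the left (hence the $(\tfrac12 - c_1 h^*)$ coefficient); here one uses that $G$, $v_n^{ap}$, $v_{n-1}^{ap}$ and $1-R_{n+1}^{ap}$ are uniformly bounded and bounded away from zero, which follows from the induction hypothesis (\ref{este2}) together with the regularity of the exact solution. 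The right-hand data term $g_n^*$ from (\ref{gns}) decomposes into the previous-step error $F_n^{ap}-\frac{F_{n-1}^{ap}-F_n^{ap}}{3}$, producing — after a discrete summation over the history and another integration by parts — the $\tfrac12\|\partial_\rho(F_n^{ap}-F_n)\|_{w^{0,0}}^2$ term, and the reaction term $f_n^*$, whose Lipschitz dependence on $(L,H,F)$ (the nonlinearities $f^F$ are smooth on the relevant compact range by (\ref{este2})) yields the sum $\sum_{i=0}^n h^* c_2(\|L_i-L_i^{ap}\|^2 + \|H_i-H_i^{ap}\|^2 + \|F_i-F_i^{ap}\|^2)$ after telescoping the two-step differences.

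The main obstacle I anticipate is the bookkeeping of the non-classical two-step discretization: the scheme (\ref{timed}) couples steps $n+1$, $n$, and $n-1$, so the "energy" that telescopes cleanly is not simply $\|\partial_\rho e_{n+1}\|^2$ but a combination involving $e_n$ as well, and one must choose the test procedure so that the cross terms $\langle \partial_\rho e_{n+1},\partial_\rho e_{n-1}\rangle$ reassemble into a nonnegative quantity plus a controllable remainder — this is exactly where the factor $\tfrac13$ in (\ref{timed}) and the resulting $h^*=\tfrac{2h}{3}$ are engineered to make the quadratic form dissipative. A secondary technical point is ensuring the interpolation operator $I_N^{0,0}$ on the right-hand side of (\ref{pi-in}) is compatible with the projection $\Pi_N^{0,0}$ on the left: the discrepancy $I_N^{0,0}g_n^* - \Pi_N^{0,0}g_n^*$ must be estimated by standard Jacobi–Gauss–Lobatto interpolation error bounds and folded into $K_1(N)$, using $\partial_\rho^2 F \in C^1$ as assumed. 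Once these are in place, collecting all terms and renaming constants gives the stated inequality.
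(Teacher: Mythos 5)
Your proposal follows essentially the same route as the paper: an energy estimate obtained by testing the error equation against the second $\rho$-derivative of the error, integration by parts (using the homogeneous Neumann conditions) to produce the $\|\partial_\rho(\cdot)\|_{w^{0,0}}^2$ terms, Cauchy--Schwarz and Young to absorb the convective term $h^*G(\rho)\partial_\rho(\cdot)$ into the diffusion term at the cost of the $c_1h^*$ defect, Lipschitz bounds on $f^F$ over the compact range guaranteed by the induction hypothesis (\ref{este2}), and a recursion over the time levels to accumulate the history sum. The one substantive difference is your choice of intermediate comparison element. You split the error through the bare $L^2$ projection $\Pi_N^{0,0}F_{n+1}$, whereas the paper constructs an auxiliary polynomial $F_{n+1,1}^N=F_1^N(\cdot,t_{n+1})$ that, in addition to being spectrally close to $F$ in the sense of (\ref{ABA1}), satisfies $\partial_\rho F_1^N(0,t)=\partial_\rho F_1^N(1,t)=0$ and $I_N^{0,0}F_1^N=F_1^N$ exactly. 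This matters at precisely the step you rely on most: converting $\bigl(e_{n+1},-\partial_\rho^2 e_{n+1}\bigr)_{w^{0,0}}$ into $\|\partial_\rho e_{n+1}\|_{w^{0,0}}^2$ requires the boundary term $\bigl[e_{n+1}\,\partial_\rho e_{n+1}\bigr]_0^1$ to vanish, and $\partial_\rho\Pi_N^{0,0}F_{n+1}$ does not in general vanish at $\rho=0,1$ (the $L^2$ projection preserves neither derivative boundary values nor the interpolation identity used to simplify the right-hand side of (\ref{pi-in})). You would therefore be left with nonzero boundary remainders and an $I_N^{0,0}$-versus-$\Pi_N^{0,0}$ mismatch that must be estimated pointwise at the endpoints, which does not follow directly from the $L^2$-type bound of Theorem \ref{tha}. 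These remainders are spectrally small and could in principle be folded into $K_1(N)$, but the cleaner fix --- and the one the paper adopts --- is to replace $\Pi_N^{0,0}F_{n+1}$ by a boundary-condition-preserving approximant so that the difference $F_{n+1}^N-F_{n+1,1}^N$ lies in the trial space (\ref{formbasis}) and all boundary terms vanish identically. With that single adjustment, your argument matches the paper's proof, including your (correct) observations about the two-step bookkeeping that produces the $\sum_i 3^{-i}$ weights and the $(\tfrac12-c_1h^*)$ coefficient.
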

\begin{proof}
For every $N\in \Bbb N$, there exists a polynomial $F^N_1$ such that
\[I_N^{0,0}F_1^N=F_1^N,~~\dfrac{\partial F_1^N }{\partial \rho}(0,t)=0,~~\dfrac{\partial F_1^N }{\partial \rho}(1,t)=0, ~~F_1^N(\rho,0)=0,~~0\leq t\leq T,\]
and
\begin{equation}\label{ABA1}
\lim_{N\rightarrow\infty} \Big(\|\dfrac{\partial{(F_1^{N}-F)}}{\partial\rho}\|^2_{\omega ^{0,0}}+\|F_1^N-F\|^2_{\omega^{0,0}}+\|\dfrac{G(\rho)\partial(F-F_1^{N})}{\partial\rho}\|^2_{\omega^{0,0}}+\|\dfrac{\partial^2(F-F_1^{N})}{\partial\rho^2}\|^2_{\omega^{0,0}}\Big)=0.
\end{equation}
Now, by taking the inner product of both sides of (\ref{pi-in}), we conclude that
\[
(\Pi_N^{0,0} \mathtt{L}F_{n+1}^N-\Pi_N^{0,0}\mathtt{L}F^N_{n+1,1},\dfrac{\partial^2 (F_{n+1}^N-F_{n+1,1}^N)}{\partial \rho^2})_{w^{0,0}} =
\]
\[
(I^{0,0}_N \underbrace{(g_n^*-\mathtt{L}F_{n+1,1}^N)}_{g_n^1},\dfrac{\partial^2 (F_{n+1}^N-F_{n+1,1}^N)}{\partial \rho^2})_{w^{0,0}},
\]
where $F_{n+1,1}^N(\rho):=F_1^N(\rho,t_{n+1})$ and for each $u\in C^2[0,1]$, $\mathtt{L}$ is defined as follows
\begin{equation}\label{lu}
\mathtt{L}u=u-\dfrac{h^*D}{(1-R_{n+1})}\dfrac{\partial^2 u}{\partial \rho^2}+h^*G(\rho)\dfrac{\partial u}{\partial \rho},
\end{equation}
where $G(\rho)$ is defined in (\ref{grho}) and is a bounded continuous  function. Therefore we have
\[
\left(F_{n+1}^N-F_{n+1,1}^N,\dfrac{\partial^2 (F_{n+1}^N-F_{n+1,1}^N)}{\partial \rho^2}\right)_{w^{0,0}}-
\]
\[
\left(  \dfrac{h^*D}{(1-R_{n+1}^{ap})^2}\dfrac{\partial^2 \left(F_{n+1}^N-F_{n+1,1}^N\right)}{\partial \rho^2},\dfrac{\partial^2(F_{n+1}^N-F_{n+1,1}^N)}{\partial \rho^2} \right)_{w^{0,0}}+
\]
\begin{eqnarray}\nonumber
&\left(h^* G(\rho)\dfrac{\partial (F_{n+1}^N-F_{n+1,1}^N)}{\partial \rho} ,\dfrac{\partial^2(F_{n+1}^N-F_{n+1,1}^N)}{\partial \rho^2} \right) _{w^{0,0}}&\\
&=(I^{0,0}_N g_n^1,\dfrac{\partial^2 (F_{n+1}^N-F_{n+1,1}^N)}{\partial \rho^2})_{w^{0,0}}.&
\end{eqnarray}
Then, by using Cauchy-Schwarz inequality there exists a constant $c_5$ such that
\begin{eqnarray}\nonumber
&\|\dfrac{\partial(F_{n+1}^N-F_{n+1,1}^N)}{\partial \rho}\|^2_{w^{0,0}} +\dfrac{Dh^*}{(1-R_{n+1})^2}\|\dfrac{\partial^2 (F^{N}_{n+1}- F^{N}_{n+1,1})}{\partial \rho^2}\|_{w^{0,0}}^2 \leq&\\
&c_5h^*\|\dfrac{\partial (F^{N}_{n+1}- F^{N}_{n+1,1})}{\partial \rho}\|_{w^{0,0}}^2 +\vert(I^{0,0}_N g_n^1,\dfrac{\partial^2 (F_{n+1}^N-F_{n+1,1}^N)}{\partial \rho^2})_{w^{0,0}}\vert .&\label{gn1}
\end{eqnarray}
By combining (\ref{gns}) with (\ref{gn1}) we obtain that
\begin{eqnarray}\nonumber
&\Big| (I_N^{0,0}g_n^1,\dfrac{\partial (F^{N}_{n+1}- F^{N}_{n+1,1})}{\partial \rho})_{w^{0,0}}\Big|\leq \Big| (I_N^{0,0} (g_n^*-\mathtt{L}F_{n+1,1}^N),\dfrac{\partial (F^{N}_{n+1}- F^{N}_{n+1,1})}{\partial \rho})_{w^{0,0}}\Big|\leq &\\\nonumber
&\Big| (F_n^{ap}-F_{n,1}^N -\dfrac{F_{n-1}^{ap}-F_{n-1,1}^N-F_n^{ap}+F_{n,1}^N}{3},\dfrac{\partial^2 (F_{n+1}^N-F_{n+1,1}^N)}{\partial \rho^2})_{w^{0,0}}\Big|+ &\\
&\Big|(\underbrace{I_N^{0,0}(\mathtt{L}F_{n+1,1}^N-F_{n,1}^N+\dfrac{F_{n-1,1}^N-F_{n,1}^N}{3}) -I_N^{0,0}f_n^*}_{g_n^2},\dfrac{\partial^2 (F_{n+1}^N-F_{n+1,1}^N)}{\partial \rho^2})_{w^{0,0}}\Big|. &\label{as2}
\end{eqnarray}
Therefore, from (\ref{gn1}) and (\ref{as2}) and using Cauchy-Schwarz inequality and Young inequality, we get that
\[
\|\dfrac{\partial (F_{n+1}^N-F_{n+1,1}^N)}{\partial \rho}\|_{w^{0,0}}^2+\dfrac{Dh^*}{(1-R_{n+1})^2}\|\dfrac{\partial^2 (F_{n+1}^N-F_{n+1,1}^N)}{\partial \rho^2}\|_{w^{0,0}}^2\leq
\]
\[
(g_n^2,\dfrac{\partial^2 (F_{n+1}^N-F_{n+1,1}^N)}{\partial \rho^2})_{w^{0,0}}+c_5h^*\| \dfrac{\partial (F_{n+1}^N-F_{n+1,1}^N)}{\partial \rho}\|^2_{w^{0,0}}+\dfrac{1}{2}\|\dfrac{\partial(F_{n}^{ap}-F_{n,1}^N)}{\partial \rho}\|_{w^{0,0}}^2+
 \]
 \[
 \dfrac{1}{2}\|\dfrac{\partial (F_{n+1}^N-F_{n+1,1}^N)}{\partial \rho}\|_{w^{0,0}}^2+\dfrac{1}{3} \Big|(\dfrac{\partial(F_{n-1}^{ap}-F_{n-1,1}^N-F_n^{ap}+F_{n,1}^N)}{\partial \rho},\dfrac{\partial(F_{n+1}^N-F_{n+1,1}^N)}{\partial \rho})_{w^{0,0}}\Big| 
.\]
So there exists a positive constant  $c_4$ such that
\begin{eqnarray}\nonumber
&\dfrac{1}{2}\|\dfrac{\partial (F_{n+1}^N-F_{n+1,1}^N)}{\partial \rho}\|_{w^{0,0}}^2+\dfrac{Dh^*}{2(1-R_{n+1})^2}\|\dfrac{\partial^2 (F_{n+1}^N-F_{n+1,1}^N)}{\partial \rho^2}\|_{w^{0,0}}^2\leq &\\\nonumber
&+h^*c_4\| 2f^{F}_n-f^F_{n-1}+E^F-2f_n^{F,ap}+f_{n-1}^{F,ap}\|_{w^{0,0}}^2+\vert (\mathtt{L}_n F_{n,1}^N -\mathtt{L}_n F ,\dfrac{\partial^2 (F_{n+1}^N-F_{n+1,1}^N)}{\partial \rho^2})_{w^{0,0}}\vert +&\\\nonumber
 &c_5h^*\| \dfrac{\partial (F_{n+1}^N-F_{n+1,1}^N)}{\partial \rho}\|^2_{w^{0,0}}+\dfrac{1}{2}\|\dfrac{\partial(F_{n}^{ap}-F_{n,1}^N)}{\partial \rho}\|_{w^{0,0}}^2+&\\
& \dfrac{1}{3} \Big|(\dfrac{\partial(F_{n-1}^{ap}-F_{n-1,1}^N-F_n^{ap}+F_{n,1}^N)}{\partial \rho},\dfrac{\partial(F_{n+1}^N-F_{n+1,1}^N)}{\partial \rho})_{w^{0,0}}\Big|,&\label{jay2}
\end{eqnarray}
where 
\begin{equation}
\mathtt{L}_n u=\mathtt{L}u_{n+1}-u_{n}+\dfrac{u_{n-1}-u_{n}}{3},
\end{equation}
and $\mathtt{L}$ is defined in (\ref{lu}). On the other hand, from (\ref{pi-in}) we have
\begin{eqnarray}\nonumber
&\Big|(\dfrac{\partial(\,{F_{n}^{ap}-F_{n,1}^N-F_{n+1}^{ap}+F_{n+1,1}^N}\,)}{\partial \rho},\dfrac{\partial(F_{n+1}^N-F_{n+1,1}^N)}{\partial \rho})_{w^{0,0}}\Big|+\dfrac{h^*D}{2(1-R_{n+1})^2}\|\dfrac{\partial^2(F_{n+1}^N-F_{n+1,1}^N)}{\partial \rho^2}\|_{w^{0,0}}^2\leq &\\\nonumber
 &h^*c_4\|\dfrac{\partial(F_{n+1}^N-F_{n+1,1}^N)}{\partial \rho}\|_{w^{0,0}}^2 +h^*M_4^F\| 2f^{F}_n-f^F_{n-1}+E^F-2f_n^{F,ap}+f_{n-1}^{F,ap}\|_{w^{0,0}}^2+&\\\nonumber
& \dfrac{1}{3} \Big|(\dfrac{\partial(F_{n}^{ap}-F_{n,1}^N-F_{n+1}^{ap}+F_{n+1,1}^N)}{\partial \rho},\dfrac{\partial(F_{n+1}^N-F_{n+1,1}^N)}{\partial \rho})_{w^{0,0}}\Big|+&\\
&\vert (\mathtt{L}_n F_{n,1}^N -\mathtt{L}_n F ,\dfrac{\partial^2 (F_{n+1}^N-F_{n+1,1}^N)}{\partial \rho^2})_{w^{0,0}}\vert, &\label{1endL}
 \end{eqnarray}
where $$f_i^{j,ap}=f^j(L_i^{ap},H_i^{ap},F_i^{ap}),~~f_i^{j}=f^j(L_i,H_i,F_i),~~ j\in\{L,H,F\}.$$
Then, by applying the recurrence relation (\ref{1endL}) repeatedly in (\ref{jay2}) we can conclude that
\begin{eqnarray}\nonumber
&(\dfrac{1}{2}-c_1h^*) \|\dfrac{\partial(F_{n+1}^N-F_{n+1,1}^N)}{\partial \rho}\|_{w^{0,0}}^2\leq \dfrac{1}{2}\|\dfrac{\partial(F_{n}^{ap}-F_{n,1}^N)}{\partial \rho}\|_{w^{0,0}}^2+&\\\nonumber
&\sum_{i=0}^n\dfrac{1}{3^i} h^*c_2\left(\| L_{i,1}^N-L_{i}^{ap}\|_{w^{0,0}}^2+\|H_{i,1}^N-H_{i}^{ap}\|_{w^{0,0}}^2 +\|F_{i,1}^N-F_{i}^{ap}\|_{w^{0,0}}^2\right)+&\\
&h^*\|E_F^*\|_{\infty}^2+h^*K_1(N). &\label{akhari}
\end{eqnarray}
where $\|E_F^*\|_{\infty}$ and $K_1(N)$ are as in (\ref{errff}).
\begin{flushright}
$ \blacksquare$
\end{flushright}
\end{proof}
\begin{lemma}\label{lemas}
Let $F_{n+1}^{ap}=F_{n+1}^{N}, ~\forall n\geq 1$ and $\dfrac{\partial^2 F}{\partial \rho^2}$ be  $C^1$-smooth function. Then, for each  $0< \rho< 1$, there exist positive constants $M^*$ and $C^*$ such that the following inequality holds
\begin{eqnarray}
&(F_{n+1}^N-F_{n+1,1}^N)^2\Big|_{\rho=1} \leq F^* +&\\
&4(\dfrac{1}{3})^{n-1}\left( \dfrac{h^*D}{2(1-R_{n+1})^2}+\dfrac{C^*}{\sqrt{N}}\right) ({L^*}^2+{H^*}^2+{F^*}^2+h^*\Vert E_F^*\Vert_{\infty}^2+h^*K_1(N)) \leq M^*.&\nonumber
\end{eqnarray}
where $L^*, H^*$ and $F^*$ are obtained from (\ref{este1}) and $\|E_F^*\|_{\infty}$ and $K_1(N)$ are as in (\ref{errff}).
\end{lemma}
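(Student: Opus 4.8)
The strategy is to bound the boundary value at $\rho=1$ of the polynomial difference $w_{n+1}:=F_{n+1}^{N}-F_{n+1,1}^{N}$ by its weighted $L^{2}$-norms in $\rho$ — the very quantities Lemma \ref{lema1} already controls — and then to feed in the induction hypothesis (\ref{este2}). Since $F_{n+1}^{N}$ lies in the trial space (\ref{formbasis}) and $F_{1}^{N}$, hence $F_{n+1,1}^{N}$, was constructed with homogeneous Neumann data, $w_{n+1}$ is a polynomial on $[0,1]$ with $\partial_{\rho}w_{n+1}(0)=\partial_{\rho}w_{n+1}(1)=0$. The first step is a one-dimensional trace inequality: writing $w_{n+1}(1)^{2}=w_{n+1}(\rho_{0})^{2}+\int_{\rho_{0}}^{1}\tfrac{d}{d\sigma}\big(w_{n+1}(\sigma)^{2}\big)\,d\sigma$ with $\rho_{0}$ a minimizer of $|w_{n+1}|$ on $[0,1]$ (so $w_{n+1}(\rho_{0})^{2}\le\|w_{n+1}\|_{w^{0,0}}^{2}$) and applying Cauchy--Schwarz gives $w_{n+1}(1)^{2}\le\|w_{n+1}\|_{w^{0,0}}^{2}+2\|w_{n+1}\|_{w^{0,0}}\|\partial_{\rho}w_{n+1}\|_{w^{0,0}}$; since $\partial_{\rho}w_{n+1}$ vanishes at an endpoint, the Poincar\'e-type bound $\|\partial_{\rho}w_{n+1}\|_{w^{0,0}}\le c\|\partial_{\rho}^{2}w_{n+1}\|_{w^{0,0}}$ then brings in the second-derivative term that carries the coefficient $\tfrac{h^{*}D}{2(1-R_{n+1})^{2}}$ in (\ref{1endL}), and the split $w_{n+1}=(F_{n+1}^{N}-F_{n+1})+(F_{n+1}-F_{n+1,1}^{N})$ isolates the additive $F^{*}$ contribution furnished by the induction data (\ref{este1}).

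Next I would insert the bound (\ref{akhari}) of Lemma \ref{lema1} for $\|\partial_{\rho}w_{n+1}\|_{w^{0,0}}^{2}$, together with the $\|\partial_{\rho}^{2}w_{n+1}\|_{w^{0,0}}^{2}$-bound tracked through (\ref{1endL}), into this trace inequality. This expresses $w_{n+1}(1)^{2}$ in terms of $\tfrac12\|\partial_{\rho}(F_{n}^{ap}-F_{n,1}^{N})\|_{w^{0,0}}^{2}$, the accumulated sum $\sum_{i=0}^{n}3^{-i}h^{*}c_{2}\big(\|L_{i,1}^{N}-L_{i}^{ap}\|_{w^{0,0}}^{2}+\|H_{i,1}^{N}-H_{i}^{ap}\|_{w^{0,0}}^{2}+\|F_{i,1}^{N}-F_{i}^{ap}\|_{w^{0,0}}^{2}\big)$, and the errors $h^{*}\|E_{F}^{*}\|_{\infty}^{2}+h^{*}K_{1}(N)$, all multiplied by the uniformly bounded factor $\tfrac{h^{*}D}{2(1-R_{n+1})^{2}}+\tfrac{C^{*}}{\sqrt{N}}$, whose $N^{-1/2}$ part is precisely the spectral interpolation/projection error supplied by Theorem \ref{tha} and (\ref{ABA1}). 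Because $F_{k}^{ap}=F_{k}^{N}$, the leading term $\tfrac12\|\partial_{\rho}(F_{n}^{ap}-F_{n,1}^{N})\|_{w^{0,0}}^{2}$ equals $\tfrac12\|\partial_{\rho}w_{n}\|_{w^{0,0}}^{2}$, so iterating (\ref{akhari})--(\ref{1endL}) down to the first time level generates the geometric prefactor $(\tfrac13)^{n-1}$, the $\tfrac13$ stemming from the nonclassical time-discretization weight in (\ref{timed}). Finally, the induction hypothesis (\ref{este2}) replaces each $\|L_{i,1}^{N}-L_{i}^{ap}\|_{w^{0,0}}$, $\|H_{i,1}^{N}-H_{i}^{ap}\|_{w^{0,0}}$, $\|F_{i,1}^{N}-F_{i}^{ap}\|_{w^{0,0}}$ by constant multiples of $L^{*},H^{*},F^{*}$ (using $F_{i,1}^{N}\to F$, (\ref{errff}), the boundedness of $1-R_{k}$ away from $0$, and $h^{*}=\tfrac{2h}{3}$ bounded), yielding the displayed bound $F^{*}+4(\tfrac13)^{n-1}\big(\tfrac{h^{*}D}{2(1-R_{n+1})^{2}}+\tfrac{C^{*}}{\sqrt{N}}\big)\big({L^{*}}^{2}+{H^{*}}^{2}+{F^{*}}^{2}+h^{*}\|E_{F}^{*}\|_{\infty}^{2}+h^{*}K_{1}(N)\big)$; since every factor there is bounded independently of $n$, the whole expression is $\le M^{*}$ for a suitable constant $M^{*}$.

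The hard part will be obtaining the trace inequality in a form compatible with the $C^{*}/\sqrt{N}$ scaling while also controlling the zeroth-order norm $\|w_{n+1}\|_{w^{0,0}}$: since $w_{n+1}$ need not vanish anywhere, Poincar\'e cannot be applied to it directly and one must route the estimate through $\|\partial_{\rho}^{2}w_{n+1}\|_{w^{0,0}}$ (legitimate because $\partial_{\rho}w_{n+1}$ vanishes at the endpoints), through the minimizing-point argument, and through the induction/approximation estimates for the split $(F_{n+1}^{N}-F_{n+1})+(F_{n+1}-F_{n+1,1}^{N})$. One must also check that the $n$-dependent accumulated sum in (\ref{akhari}), multiplied by $(\tfrac13)^{n-1}$, stays bounded so that $M^{*}$ is genuinely uniform in $n$. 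The remaining work — the Cauchy--Schwarz and Young manipulations, the substitution of (\ref{este2}), and the final collapse into $M^{*}$ — is routine.
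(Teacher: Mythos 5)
Your proposal is correct in outline but reaches the boundary estimate by a genuinely different mechanism than the paper. The paper's own proof (which is only a sketch) takes the inner product of the collocation equation (\ref{pi-in}) with the weighted test function $\rho\,\partial_\rho\bigl(\rho\,\partial_\rho(F_{n+1}^N-F_{n+1,1}^N)\bigr)$; after integration by parts this choice produces the boundary value at $\rho=1$ directly among the boundary terms, and the remaining interior integrals are absorbed using (\ref{akhari}), the induction data (\ref{este1}), Theorem \ref{tha}, and Cauchy--Schwarz/Young. You instead prove a one-dimensional trace inequality by hand: evaluating at a minimizer $\rho_0$ of $|w_{n+1}|$ and integrating $(w_{n+1}^2)'$ gives $w_{n+1}(1)^2\le\|w_{n+1}\|_{w^{0,0}}^2+2\|w_{n+1}\|_{w^{0,0}}\|\partial_\rho w_{n+1}\|_{w^{0,0}}$, and you then feed in Lemma \ref{lema1} and the induction hypothesis, splitting through the exact solution to produce the additive $F^*$ and invoking the spectral error for the $C^*/\sqrt{N}$ term. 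This is more elementary and more transparent than the paper's duality-with-a-special-test-function argument, it uses exactly the same prior ingredients, and either route yields a bound uniform in $n$, which is all the lemma is really used for.

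One caveat: your detour through $\|\partial_\rho^2 w_{n+1}\|_{w^{0,0}}$ via a Poincar\'e inequality is both unnecessary and misleading about constants. The energy estimate (\ref{1endL}) controls the quantity $\tfrac{h^*D}{2(1-R_{n+1})^2}\|\partial_\rho^2 w_{n+1}\|^2_{w^{0,0}}$, so solving for $\|\partial_\rho^2 w_{n+1}\|^2_{w^{0,0}}$ itself costs a factor $2(1-R_{n+1})^2/(h^*D)$ --- the coefficient lands in the denominator, not in the numerator where it appears in the lemma's displayed bound, and the resulting $1/h^*$ would spoil uniformity. Since (\ref{akhari}) already bounds $\|\partial_\rho w_{n+1}\|^2_{w^{0,0}}$ directly (after dividing by $\tfrac12-c_1h^*$), you should use that and drop the second-derivative step entirely; with that simplification your argument closes and reproduces the paper's conclusion up to the precise (and, given the paper's one-line derivation, essentially unverifiable) form of the multiplicative factor.
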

\begin{proof}
By taking the inner product of both sides of (\ref{pi-in}), we can deduce that
\[
(\Pi_N^{0,0} \mathtt{L}F_{n+1}^N-\Pi_N^{0,0}\mathtt{L}F^N_{n+1,1},\rho\dfrac{\partial}{\partial \rho}(\rho\dfrac{\partial (F_{n+1}^N-F_{n+1,1}^N)}{\partial \rho}))=\]
\[
(I^{0,0}_N (g_n^*-\mathtt{L}F_{n+1,1}^N), \rho\dfrac{\partial}{\partial \rho}(\rho\dfrac{\partial (F_{n+1}^N-F_{n+1,1}^N)}{\partial \rho})).
\]
Using (\ref{akhari}), (\ref{este1}), Theorem \ref{tha}, Definition \ref{defp}, Cauchy-Schwarz inequality and Young inequality, we have
\begin{eqnarray}\nonumber
&(F_{n+1}^N-F_{n+1,1}^N)^2\Big|_{\rho=1} \leq F^* +&\\
&4(\dfrac{1}{3})^{n-1}\left( \dfrac{h^*D}{2(1-R_{n+1})^2}+\dfrac{\max\{1,c_5\}}{\sqrt{N}}\right) ({L^*}^2+{H^*}^2+{F^*}^2+h^*\Vert E_F^*\Vert_{\infty}^2+h^*K_1(N)) \leq M^*,&
\end{eqnarray}
where $c_5$ is obtained from (\ref{gn1}). $\blacksquare$
\end{proof}
Finally, using Lemmas \ref{lema1} and \ref{lemas} we have
\begin{eqnarray}\nonumber
&(\dfrac{1}{2}-c_1h^*) \|\dfrac{\partial(F_{n+1}^N-F_{n+1,1}^N)}{\partial \rho}\|_{w^{0,0}}^2\leq \dfrac{1}{2}\|\dfrac{\partial(F_{n}^{ap}-F_{n,1}^N)}{\partial \rho}\|_{w^{0,0}}^2+&\\\nonumber
&\sum_{i=0}^n\dfrac{1}{3^i} h^*c_2\left(\|\dfrac{\partial (L_{i,1}^N-L_{i}^{ap})}{\partial \rho}\|_{w^{0,0}}^2+\|\dfrac{\partial (H_{i,1}^N-H_{i}^{ap})}{\partial \rho}\|_{w^{0,0}}^2 +\|\dfrac{\partial (F_{i,1}^N-F_{i}^{ap})}{\partial \rho}\|_{w^{0,0}}^2\right)+\\\label{lastF}
&h^*\|E_F^*\|_{\infty}^2+h^*K_1(N),&
\end{eqnarray}
where $\|E_F^*\|_{\infty}$ and $K_1(N)$ are as in (\ref{errff}).\\
Similar to the proof of lemma \ref{lema1}, we can show that there exist positive constants $c_6,\, c_7,\, c_8,\,c_9$ such that for $0\leq t\leq T,$ 
\begin{eqnarray}\nonumber
&(\dfrac{1}{2}-c_6h^*) \|\dfrac{\partial(L_{n+1}^N-L_{n+1,1}^N)}{\partial \rho}\|_{w^{0,0}}^2\leq \dfrac{1}{2}\|\dfrac{\partial(L_{n}^{ap}-L_{n,1}^N)}{\partial \rho}\|_{w^{0,0}}^2+&\\
 &\sum_{i=0}^n\dfrac{1}{3^i} h^*c_7\left( \|\dfrac{\partial(L_{i,1}^N-L_{i}^{ap})}{\partial \rho}\|_{w^{0,0}}^2 +\|\dfrac{\partial(F_{i,1}^N-F_{i}^{ap})}{\partial \rho}\|_{w^{0,0}}^2\right)+h^*\|E_L^*\|_{\infty}^2+h^*K_2(N),&\label{akhariH}
\end{eqnarray}
\begin{eqnarray}\nonumber
&(\dfrac{1}{2}-c_8h^*) \|\dfrac{\partial(H_{n+1}^N-H_{n+1,1}^N)}{\partial \rho}\|_{w^{0,0}}^2\leq \dfrac{1}{2}\|\dfrac{\partial(H_{n}^{ap}-H_{n,1}^N)}{\partial \rho}\|_{w^{0,0}}^2+&\\
& \sum_{i=0}^n\dfrac{1}{3^i} h^*c_9\left( \|\dfrac{\partial(H_{i,1}^N-H_{i}^{ap})}{\partial \rho}\|_{w^{0,0}}^2 +\|\dfrac{\partial(F_{i,1}^N-F_{i}^{ap})}{\partial \rho}\|_{w^{0,0}}^2\right)+h^*\|E_H^*\|_{\infty}^2+h^*K_3(N),&\label{akhariF}
\end{eqnarray}
where  
$$\lim_{N\rightarrow\infty} K_2(N)=0, \lim_{N\rightarrow\infty} K_3(N)=0~~\text{ and}~~  \max\{\Vert E_L^*\Vert_{\infty},\Vert E_H^*\Vert_{\infty}\}<c({h^*})^2, $$ 
and $c$ is a positive constant.\\
In the following theorem, the convergence of the proposed method is proved.
\begin{theorem}(Convergence Theorem)\label{convergetheo}
Let $L_{n+1}^{ap}=L_{n+1}^N$, $H_{n+1}^{ap}=H_{n+1}^N$ and $F_{n+1}^{ap}=F_{n+1}^N$. Under the assumption of Lemma \ref{lema1} there exist positive constants $M_1 ,M_ 2$ such that 
$$\max_{k=0,\ldots,n+1}\{\xi_{k}\}\leq M_1(e^{M_2T})({h^*}^{2}+(K(N))^{\frac{1}{2}}),$$
where
$$\xi_{k} = \Vert \dfrac {\partial(L_k^{ap}-L_k)}{\partial \rho}\Vert_{w^{0,0}}+\Vert \dfrac{\partial(H_k^{ap}-H_k)}{\partial \rho}\Vert_{w^{0,0}}+\Vert \dfrac{\partial ( F_k^{ap}-F_k)}{\partial \rho}\Vert_{w^{0,0}}+\vert R_k^{ap}-R_k\vert,$$ $$\lim_{N\rightarrow\infty}K(N)=0.$$
\end{theorem}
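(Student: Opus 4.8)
The plan is to combine the three energy-type estimates for $L$, $H$, $F$ from \eqref{lastF}, \eqref{akhariH}, \eqref{akhariF} together with the recursion for $R$ coming from \eqref{apr} into a single discrete Gr\"onwall argument for the combined quantity $\xi_k$. First I would control the boundary term $R_{n+1}^{ap}-R_{n+1}$: subtracting the exact version of \eqref{apr} from its approximate counterpart and using $v_n^{ap}(0)-v_n(0)$, which by \eqref{apv} is an integral in $\rho$ of $f^v(L_n^{ap},H_n^{ap},F_n^{ap})-f^v(L_n,H_n,F_n)$ plus the interpolation error of the spectral method, I would bound $|R_{n+1}^{ap}-R_{n+1}|$ by $|R_n^{ap}-R_n|$ (with the $1/3$ factor on the $R_{n-1}$ term) plus $h^* c\,(\|L_n^{ap}-L_n\|+\|H_n^{ap}-H_n\|+\|F_n^{ap}-F_n\|)$ plus $h^*(h^{*2}+K(N)^{1/2})$; here the Lipschitz continuity of $f^v$ on the a priori bounded region guaranteed by the induction hypothesis \eqref{este2} and Lemma \ref{lemas} is used, and a Poincar\'e-type inequality converts the $\rho$-derivative norms appearing in \eqref{lastF}--\eqref{akhariF} into $L^2_{w^{0,0}}$ norms of the differences themselves.

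Next I would add \eqref{lastF}, \eqref{akhariH} and \eqref{akhariF}, absorb the $c_ih^*\|\partial(\cdot_{n+1}^N-\cdot_{n+1,1}^N)/\partial\rho\|^2$ terms on the left for $h^*$ small enough (the key structural point, which is why the non-classical second-order scheme \eqref{timed} with its $2h/3$ and $1/3$ coefficients was introduced), and use the triangle inequality $\|\partial(\cdot_{k}^{ap}-\cdot_k)/\partial\rho\|\le\|\partial(\cdot_k^N-\cdot_{k,1}^N)/\partial\rho\|+\|\partial(\cdot_{k,1}^N-\cdot_k)/\partial\rho\|$ together with \eqref{ABA1} (and its analogues for $L$, $H$) to replace the auxiliary projected polynomials $\cdot_{k,1}^N$ by the exact solutions at the cost of an $o(1)=K(N)$ contribution. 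The geometric factor $\sum_i 3^{-i}\le 3/2$ makes the history sum a genuine finite constant times $\max_{i\le n}\xi_i^2$. Setting $\eta_k:=\xi_k^2$ I would then arrive at an inequality of the form $\eta_{n+1}\le \eta_n + C h^*\sum_{i=0}^{n}\eta_i + Ch^*({h^*}^{4}+K(N))$, i.e. a standard discrete Gr\"onwall inequality.

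Applying the discrete Gr\"onwall lemma gives $\eta_{n+1}\le C\,e^{CTn h^*/(nh^*)}\cdot\dots$, more precisely $\max_{k\le n+1}\eta_k\le M_1^2 e^{2M_2 T}({h^*}^{4}+K(N))$, and taking square roots yields $\max_{k\le n+1}\xi_k\le M_1 e^{M_2 T}({h^*}^2+K(N)^{1/2})$, which is the claimed bound once one recalls $h^*=\tfrac{2h}{3}$ so ${h^*}^2$ and $h^2$ differ only by a constant absorbed into $M_1$. Finally, this estimate closes the induction: since the right-hand side tends to $0$ as $h\to0$, $N\to\infty$, the differences $|L_k^{ap}-L_k|$ etc.\ (again via Poincar\'e from the derivative norms, noting the Neumann boundary conditions in \eqref{fin1}--\eqref{Fba} and $L_0=H_0=F_0=0$) are smaller than the prescribed constants $L^*,H^*,F^*,R^*$ of \eqref{este1}, so the a priori hypothesis \eqref{este2} is verified at level $n+1$ and the induction proceeds for all $k\le M$.

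I expect the main obstacle to be the bookkeeping in the coupling: each of \eqref{lastF}, \eqref{akhariH}, \eqref{akhariF} has on its right-hand side the other two components' errors, and one must check that after adding them the resulting matrix of coefficients still permits absorbing the diagonal $h^*$-terms and closing Gr\"onwall uniformly in $n$; a second delicate point is ensuring the a priori boundedness region (needed for the Lipschitz constants of $f^L,f^H,f^F,f^v$ and for $1-R_{n+1}^{ap}$ staying bounded away from $0$) is preserved by the induction, which requires the final constants $M_1,M_2$ to be chosen independently of $n$ and then $h$ small enough depending only on $T$ and the problem data.
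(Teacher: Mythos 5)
Your proposal is correct and follows essentially the same route as the paper: sum the three energy estimates \eqref{lastF}, \eqref{akhariH}, \eqref{akhariF} into a recurrence for the squared error functional ($\phi_k$ in the paper, your $\eta_k=\xi_k^2$), apply a discrete Gr\"onwall iteration to get a factor $(1+M_3h^*)^{n+1}\leq e^{M_2T}$, and take square roots. The additional details you supply --- the explicit recursion for $R_k^{ap}-R_k$ via $f^v$, the triangle inequality with \eqref{ABA1} to replace the auxiliary polynomials $F_{k,1}^N$ by the exact solutions, and the closing of the induction on \eqref{este2} --- are steps the paper leaves implicit, so they strengthen rather than alter the argument.
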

\begin{proof}
From (\ref{lastF}), (\ref{akhariH}) and (\ref{akhariF}) we can conclude that there exists a constant $M_4,$ such that 
\[
\max_{k=0,\ldots,n+1}\{\phi_{k}\}\leq (1+M_4h^*)\phi_{n} +(1+M_4h^*)^2\max{\{\phi_{k}\}}_{k=0,1,\ldots n}+(1+M_4h^*)(h^*\|E_F^*\|^2_{\infty}+h^*K(N)),
\]
where
$${\phi}_{k} = \Vert \dfrac {\partial(L_k^{ap}-L_k)}{\partial \rho}\Vert_{w^{0,0}}^2+\Vert \dfrac{\partial(H_k^{ap}-H_k)}{\partial \rho}\Vert_{w^{0,0}}^2+\Vert \dfrac{\partial ( F_k^{ap}-F_k)}{\partial \rho}\Vert_{w^{0,0}}^2+\vert R_k^{ap}-R_k\vert^2,$$ $$\lim_{N\rightarrow\infty}K(N)=0.$$
and $\|E_F^*\|$ is as in (\ref{errff}). Therefore, there exists a constant $M_3$ that
\[
\max_{k=0,\ldots,n+1}\{\phi_{k}\}\leq (1+M_3h^*)\max{\{\phi_{k}\}}_{k=0,1,\ldots n}+(1+M_3h^*)(h^*\|E_F^*\|^2_{\infty}+h^*K(N)).
\]
Then, by applying the above recurrence relation we have
\[
\max_{k=0,\ldots,n+1}\{\phi_{k}\}\leq (1+M_3h^*)^{n+1}\phi_0+\vert\dfrac{(1+M_3h^*)^{n+1}-1}{M_3h^*}\vert(h^*\|E_F^*\|^2_{\infty}+h^*K(N)).\]
Finally, it may be concluded that there exist constants $M_1$ and $M_2$ such that
\begin{equation}
\max_{k=0,\ldots,n+1}\{\xi_{k}\}\leq M_1(e^{{M_2T}})({h^*}^{2}+(K(N))^{\frac{1}{2}}).
\end{equation}\label{conlast}
\begin{flushright}
$\blacksquare$
\end{flushright}
\end{proof}
Employing the General Sobolev inequalities, there exists a positive constant $M_5$ such that for $0\leq t\leq T$,
\begin{equation}\label{idn}
\vert F_{n+1}^{ap}-F_{n+1}\vert \leq M_5 \Vert F_{n+1}^{ap}-F_{n+1}\Vert_{w^{0,0}}.
\end{equation}
Using (\ref{idn}) and (\ref{lastF}), we can choose proper $N$ and $h$ such that
 \begin{equation}\label{idn2}
\vert F_{n+1}^{ap}-F_{n+1}\vert\leq F^*.
\end{equation}
Similar to the proof of the Theorem {\ref{convergetheo}} and (\ref{idn2}), we can show that
 \begin{equation}\label{conlast}
\vert L_{n+1}^{ap}-L_{n+1}\vert\leq L^*,
\end{equation}
and 
 \begin{equation}\label{idn4}
\vert H_{n+1}^{ap}-H_{n+1}\vert\leq H^*.
\end{equation}
Thus, using (\ref{idn2}), (\ref{conlast}), (\ref{idn4}) and Theorem \ref{convergetheo}, we can deduce that the sequence $\{L_n,H_n,F_n,R_n\}_{n=0}^{\infty}$ converges to the exact solution of the problem (\ref{e1})-(\ref{e3}) on $[0,1]\times [0,T]$. Now, using the principle of mathematical induction, Theorem (\ref{convergetheo}), (\ref{este1}),  (\ref{este2}), (\ref{idn2}), (\ref{conlast}) and (\ref{idn4}), we have
\begin{equation}
\vert L_k^{ap}-L_k\vert <L^*,~~ \vert H_k^{ap}-H_k\vert <H^*,~~ \vert F_k^{ap}-F_k\vert <F^*,~~ \vert R_k^{ap}-R_k\vert <R^*, ~~0\leq k\leq M.
\end{equation}
\subsection{Stability}
In this section, we want to prove the stability of the presented method. For this purpose, first, we consider the perturbed problem as follows
 \begin{eqnarray}\nonumber
&\dfrac{\partial {L}}{\partial t}-\dfrac{1}{(1-R(t))^2}\dfrac{\partial^2 {L}}{\partial \rho^2}+&\\\nonumber
&\left( \dfrac{-2}{\rho (1-R(t))^2+R(t)(1-R(t))}+ \dfrac{v(0,t)(\rho -1)}{1-R(t)}+\dfrac{2(1-\rho)\alpha}{1-R(t)}\right) \dfrac{\partial {L}}{\partial \rho}=f^L({L},{F})+p_1(\rho,t),&\\
&\dfrac{\partial {L}}{\partial \rho}=0\,\,\text{at}\,\, \rho =0, \, t >0,\quad \dfrac{\partial {L}}{\partial \rho}=0\,\,\text{at}\,\, \rho =1, \, t >0,\quad {L}(\rho,0)=0,&\label{per1}
\end{eqnarray}
\begin{eqnarray}\nonumber
& \dfrac{\partial {H}}{\partial t}-\dfrac{1}{(1-R(t))^2}\dfrac{\partial^2 {H}}{\partial \rho^2}+&\\\nonumber
&\left( \dfrac{-2}{\rho (1-R(t))^2+R(t)(1-R(t))}+ \dfrac{v(0,t)(\rho -1)}{1-R(t)}+\dfrac{2(1-\rho)\alpha}{1-R(t)}\right) \dfrac{\partial {H}}{\partial \rho}=f^H({H},{F})+p_2(\rho,t),&\\
&\dfrac{\partial{H}}{\partial \rho}=0 \,\,\text{at}\,\, \rho =0, \, t >0,\quad \dfrac{\partial{H}}{\partial \rho}=0 \,\,\text{at}\,\, \rho =1, \, t >0,\quad {H}(\rho,0)=0, &
\end{eqnarray}
\begin{eqnarray}\nonumber
 &\dfrac{\partial{F}}{\partial t}-\dfrac{D}{(1-R(t))^2}\dfrac{\partial^2 {F}}{\partial \rho^2}+&\\\nonumber
 &\left( \dfrac{-2D}{\rho (1-R(t))^2+R(t)(1-R(t))}+\dfrac{v}{1-R(t)}+ \dfrac{v(0,t)(\rho -1)}{1-R(t)}+\dfrac{2D(1-\rho)\beta}{1-R(t)}\right) \dfrac{\partial{F}}{\partial \rho}&\\
 &=f^F({L},{H},{F})+p_3(\rho,t),&\\
&\dfrac{\partial {F}}{\partial \rho}=0 \,\,\text{at}\,\, \rho =0, \, t >0,\quad\dfrac{\partial {F}}{\partial \rho}=0 \,\,\text{at}\,\, \rho =1, \, t >0,\quad{F}(\rho,0)=0,&\label{per2}
\end{eqnarray}
\begin{eqnarray}\nonumber
 &\dfrac{1}{1-R(t)}\dfrac{\partial v}{\partial \rho}=f^v({L},{H},{F})+p_4(\rho,t),&\\
&v(\rho,t)=0 \,\,\text{at}\,\, \rho =1, \, t >0,&
\end{eqnarray}
\begin{eqnarray}\nonumber
& \dfrac{d R}{ d t}=v(0,t), \, t>0,&\\ \label{per4}
&R (0)=\epsilon. &
 \end{eqnarray}
 In the following theorem, the stability of the proposed method is proved.
\begin{theorem}\label{stabtheo}
 Let $\epsilon_1$ be a positive constant and $\vert p_i\vert<\epsilon_1 \,\,(i=1,\ldots , 4)$. Then, under the assumptions of Lemma \ref{lema1}, there exist positive constants $M_1^*,\, M_2^*$such that
$$\max_{k=0,\ldots ,n+1}{\{\xi_{k}\}}\leq M_1^*(e^{M_{2}^*T}-1)({h^*}^2+\epsilon_1+(K(N))^{\frac{1}{2}}),$$
where
$$\xi_{k} = \Vert \dfrac {\partial(L_k^{ap}-L_k)}{\partial \rho}\Vert_{w^{0,0}}+\Vert \dfrac{\partial(H_k^{ap}-H_k)}{\partial \rho}\Vert_{w^{0,0}}+\Vert \dfrac{\partial ( F_k^{ap}-F_k)}{\partial \rho}\Vert_{w^{0,0}}+\vert R_k^{ap}-R_k\vert.$$
\end{theorem}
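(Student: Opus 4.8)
Here is the plan. The strategy is to reproduce, essentially line by line, the energy machinery used for Theorem~\ref{convergetheo}, but now carrying the four perturbations $p_1,\dots,p_4$ through every inequality. First I would apply the non-classical time stepping (\ref{timed}) together with the linearization $u(\rho,t_{n+1})=2u(\rho,t_n)-u(\rho,t_{n-1})+E^u$ to the perturbed system (\ref{per1})--(\ref{per4}), obtaining the exact analogue of (\ref{apL})--(\ref{apr}) in which the right-hand side of each $L$-, $H$-, $F$-equation acquires the extra source $h^*\bigl(2p_i(\rho,t_n)-p_i(\rho,t_{n-1})\bigr)$ and the $v$- and $R$-relations pick up the corresponding contribution of $p_4$. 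Since $\Vert p_i\Vert_{w^{0,0}}\le\Vert p_i\Vert_{\infty}<\epsilon_1$ on $[0,1]$, each such term has size $O(h^*\epsilon_1)$ in $L^2_{w^{0,0}}$, so after a Young inequality it contributes a term bounded by $c\,h^*\epsilon_1^2$ to the right-hand side of the energy estimate.

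Next I would repeat the proofs of Lemmas~\ref{lema1} and~\ref{lemas} (and their $L$- and $H$-counterparts): take the inner product of the collocation identity (\ref{pi-in}) with $\partial_\rho^2(F_{n+1}^N-F_{n+1,1}^N)$, use the operator $\mathtt{L}$ from (\ref{lu}) and the boundedness of $G(\rho)$, and apply Cauchy--Schwarz and Young exactly as in (\ref{gn1})--(\ref{akhari}). The only new ingredient is that the forcing discrepancy $g_n^*-\mathtt{L}F_{n+1,1}^N$ now also carries $h^*\bigl(2p_3(\cdot,t_n)-p_3(\cdot,t_{n-1})\bigr)$; estimating it adds a $c\,h^*\epsilon_1^2$ term to (\ref{akhari}), and likewise for $L,H$ (with $p_1,p_2$) and for the first-order equation for $v$ and the ODE for $R$ (with $p_4$, using that $|R_{n+1}^{ap}-R_{n+1}|$ is controlled by the accumulated $v$-differences). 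Invoking the a priori bounds (\ref{este1})--(\ref{este2}) and the local Lipschitz property of $f^L,f^H,f^F,f^v$ and of $G(\rho)$ in $R_{n+1}^{ap},v_n^{ap},v_{n-1}^{ap}$ (via Taylor's theorem, as in Section~\ref{sec3}), and assembling the three estimates as in the proof of Theorem~\ref{convergetheo}, I would reach a recurrence
\[
\max_{k\le n+1}\{\phi_k\}\le(1+Mh^*)\max_{k\le n}\{\phi_k\}+(1+Mh^*)\bigl(h^*\Vert E_F^*\Vert_{\infty}^2+h^*K(N)+c\,h^*\epsilon_1^2\bigr),
\]
where $\phi_k$ is the squared version of $\xi_k$ as in the proof of Theorem~\ref{convergetheo}.

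Finally, because the perturbed and unperturbed problems share the same zero initial data, $\phi_0=0$, so iterating the recurrence and using $\Vert E_F^*\Vert_{\infty}\le c(h^*)^2$ (see (\ref{errff})) and $nh^*\le T$ gives
\[
\max_{k\le n+1}\{\phi_k\}\le\frac{(1+Mh^*)^{n+1}-1}{Mh^*}\,\bigl(h^*\Vert E_F^*\Vert_{\infty}^2+h^*K(N)+c\,h^*\epsilon_1^2\bigr)\le C\bigl(e^{MT}-1\bigr)\bigl(c^2(h^*)^4+K(N)+c\,\epsilon_1^2\bigr).
\]
Taking square roots, using $\sqrt{a+b+d}\le\sqrt a+\sqrt b+\sqrt d$ together with $\xi_k\le 2\sqrt{\phi_k}$, and renaming constants turns this into $\max_{k\le n+1}\{\xi_k\}\le M_1^*\bigl(e^{M_2^*T}-1\bigr)\bigl({h^*}^2+\epsilon_1+(K(N))^{1/2}\bigr)$, which is the claim. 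I expect the real difficulty to be the same as in the convergence analysis, now compounded: the coefficient $G(\rho)$ and the convection terms of the $L,H,F$ equations depend on $R_{n+1}^{ap},v_n^{ap},v_{n-1}^{ap}$, which in turn depend through (\ref{apv})--(\ref{apr}) on $L,H,F$ and on $p_4$, so $p_4$ enters the parabolic part only indirectly and one must show the induced error stays $O(\epsilon_1)$ uniformly in $n$. Making all the nonlinear bounds uniform requires the a priori estimates (\ref{este1})--(\ref{este2}) to remain valid for the perturbed solution, which forces $\epsilon_1$, $h$ and $1/N$ to be chosen small enough --- exactly the bootstrap already performed after Theorem~\ref{convergetheo}; once it is in place, the remaining steps are a routine repetition of the discrete Gr\"onwall argument.
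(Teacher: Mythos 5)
Your proposal follows essentially the same route as the paper: the paper's own proof simply asserts that solving the perturbed system with the presented method yields the recurrence $\max_{k\le n+1}\{\phi_k\}\leq(1+M_3^*h^*)\max_{k\le n}\{\phi_k\}+(1+M_3^*h^*)(h^*\Vert E\Vert_\infty^2+h^*K(N)+h^*\epsilon_1)$ and then iterates it, which is exactly the discrete Gr\"onwall argument you describe. Your version is in fact more explicit about where the $p_i$ enter the energy estimates (and your tracking of $\epsilon_1^2$ in the squared quantities $\phi_k$ is arguably more consistent than the paper's unsquared $h^*\epsilon_1$), but the underlying argument is the same.
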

\begin{proof}
If we solve the perturbed problem (\ref{per1})-(\ref{per4}) using the presented method, one can conclude that there exist a positive constant $M_4^*$ such that
\[
\max_{k=0,\ldots,n+1}\{\phi_{k}\}\leq (1+M_4^*h^*)\phi_{n} +(1+M_4^*h^*)^2\max{\{\phi_{k}\}}_{k=0,1,\ldots n}+(1+M_4^*h^*)(h^*\|E\|^2_{\infty}+h^*K(N)+h^*\epsilon_1),
\]
where $\|E\|_{\infty}\leq M^*{h^*}^2$ and $M^*$ is a positive constant and also,
$${\phi}_{k} = \Vert \dfrac {\partial(L_k^{ap}-L_k)}{\partial \rho}\Vert_{w^{0,0}}^2+\Vert \dfrac{\partial(H_k^{ap}-H_k)}{\partial \rho}\Vert_{w^{0,0}}^2+\Vert \dfrac{\partial ( F_k^{ap}-F_k)}{\partial \rho}\Vert_{w^{0,0}}^2+\vert R_k^{ap}-R_k\vert^2,$$ $$\lim_{N\rightarrow\infty}K(N)=0.$$
So we get
\[
\max_{k=0,\ldots,n+1}\{\phi_{k}\}\leq (1+M_3^*h^*)\max{\{\phi_{k}\}}_{k=0,1,\ldots n}+(1+M_3^*h^*)(h^*\|E\|^2_{\infty}+h^*K(N)+h^*\epsilon_1),
\]
then, by applying the above recurrence relation we have
\[
\max_{k=0,\ldots,n+1}\{\phi_{k}\}\leq (1+M_3^*h^*)^{n+1}\phi_0+\vert\dfrac{(1+M_3^*h^*)^{n+1}-1}{M_3^*h^*}\vert(h^*\|E\|^2_{\infty}+h^*K(N)+h^*\epsilon_1).\]
Therefore, there exist costats $M_1^*$ and $M_2^*$ such that we have
$$\max_{k=0,\ldots,n+1}\{\xi_{k}\}\leq M_1^*(e^{{M_2^*T}})({h^*}^{2}+\epsilon_1+(K(N))^{\frac{1}{2}}).$$
\begin{flushright}
$\blacksquare$
\end{flushright}
\end{proof}
\section{Numerical experiments}\label{sec5}
The main target of this section is to investigate the numerical solution of the model of Atherosclerosis from two perspectives: 1-numerical solution point of view 2-biological simulation point of view. We first examine the numerical results from the first perspective.
We solve the model of Atherosclerosis by applying the finite difference/collocation method. The most important question to ask is how to construct trial functions which satisfy the boundary condition to establish the collocation method. The first and simple way that comes to mind is to use a linear combination of monomial polynomials. So, we approximate the functions $L(\rho,t) ,H(\rho,t), F(\rho,t)$  in the form of (\ref{formbasis}) as follows
\[L_{n+1}^N(\rho)=\sum_{i=0}^N l_i^{n+1} p_i(\rho) ,~~H_{n+1}^N(\rho)=\sum_{i=0}^N h_i^{n+1} p_i(\rho) ,~~F_{n+1}^N(\rho)=\sum_{i=0}^N f_i^{n+1} p_i(\rho),\]
where 
\[p_i(\rho)=\dfrac{\rho^{i+2}}{i+2}-\dfrac{\rho^{i+1}}{i+1},~~ i=1,2,\ldots, N,\]
which will denote by "TFBM" (Trial Functions Based on Monomials). Also, the Gauss quadrature  points $\{x_i^{0,0}\}_{i=1}^{N}$ (i.e., the zeros of Legendre polynomial of degree $N+1$) are considered as  collocation points. The typical parameter values and the initial conditions
for our numerical simulations are:
$k_1=10,\,k_2=10,\,K_1=10^{-2},\, K_2=0.5,\, D=8.64\times 10^{-7}, \, \mu_1=0.015,\, \mu_2=0.03,\, r_1=2.42\times 10^{-5},\, r_2=5.45\times 10^{-7},\, \lambda=2.573\times 10^{-3},\, \delta=-2.541\times 10^{-3},\, M_0=5\times 10^{-5},\, \alpha=1,\, \beta=0.01,\,L_0=14\times 10^{-4}, \,H_0=6\times 10^{-4},\,F_0=0$, which are taken from \cite{hao2014ldl,friedman2015free}. Notice that in our simulations, the radius of initial plaque $\epsilon$  in (\ref{etf}) is considered to be 0.9 cm.

Now, in order to verify our numerical results, we need to present the following definition.
\begin{definition}
A sequence $\{x_n\}_{n=1}^{\infty}$ is said to converge to $x$ with order $p$ if there exists a constant $C$ such that $\vert x_n-x\vert \leq Cn^{-p} ,~~\forall n$.
This can be written as $\vert x_n-x\vert=\mathcal{O}(n^{-p})$.  A practical method to calculate the rate of convergence for a discretization method is to use the following formula
\begin{equation}\label{ratio_remark}
p\approx \dfrac{\log_e(e_{n_2}/e_{n_1})}{\log_e(n_1/n_2)},
\end{equation}
where $e_{n_1}$ and $e_{n_2}$ denote the errors with respect to the step sizes $\dfrac{1}{n_1}$ and $\dfrac{1}{n_2}$, respectively \cite{gautschi1997numerical}.
\end{definition}
It is valuable to point out that our numerical calculations are carried out using the \textbf{MATLAB 2018a} program in a computer with the Intel Core i7 processor (2.90 GHz, 4 physical cores). \\
Lies in the fact that it is hard or sometimes impossible to reach the exact solution of most of the coupled nonlinear models analytically, and because we have shown that the presented method for solving the model is stable and convergent (See Theorem \ref{convergetheo}), so, we consider numerical results for the large $M=300$ and $N=10$ as an exact solution and for other values of $M$, we report the time-error in Table \ref{timeerrTFBM}. To better see the time-error of the presented approach numerically, we report the obtained results in Figure \ref{fig_time2TFBM}. As we see in Table \ref{NTFBM} and Figure \ref{FTFBM}, the non-classical finite difference method presented in (\ref{timed}) has almost $\mathcal{O}(h^2)$  error, which verifies our theoretical results presented in Theorem \ref{convergetheo}.
\begin{table}
\begin{center}
{\scriptsize{\begin{tabular}{lllllllllllll}
\hline
{Error of} &  & \multicolumn{1}{c}{M=100} & \multicolumn{1}{c}{} & \multicolumn{1}{c}{M=200} & \multicolumn{1}{c}{} & \multicolumn{1}{c}{M=300} & \multicolumn{1}{c}{} & \multicolumn{1}{c}{M=400} & \multicolumn{1}{c}{} & \multicolumn{1}{c}{M=500} & \multicolumn{1}{c}{} & \multicolumn{1}{c}{M=600} \\ 
\cline{1-1}\cline{3-3}\cline{5-5}\cline{7-7}\cline{9-9}\cline{11-11}\cline{13-13}
\multicolumn{1}{c}{L} &  & \multicolumn{1}{c}{$1.832329e-05$} & \multicolumn{1}{c}{} & \multicolumn{1}{c}{$7.33126e-06$} & \multicolumn{1}{c}{} & \multicolumn{1}{c}{$4.11065e-06$} & \multicolumn{1}{c}{} & \multicolumn{1}{c}{$2.58829e-06$} & \multicolumn{1}{c}{} & \multicolumn{1}{c}{$1.70362e-06$} & \multicolumn{1}{c}{} & \multicolumn{1}{c}{$1.12596e-06$} \\ 
\multicolumn{1}{c}{H} &  & \multicolumn{1}{c}{$2.57909e-07$} & \multicolumn{1}{c}{} & \multicolumn{1}{c}{$1.03322e-07$} & \multicolumn{1}{c}{} & \multicolumn{1}{c}{$5.79623e-08$} & \multicolumn{1}{c}{} & \multicolumn{1}{c}{$3.65061e-08$} & \multicolumn{1}{c}{} & \multicolumn{1}{c}{$2.40324e-08$} & \multicolumn{1}{c}{} & \multicolumn{1}{c}{$1.58853e-08$} \\ 
\multicolumn{1}{c}{F} &  & \multicolumn{1}{c}{$6.60404e-09$} & \multicolumn{1}{c}{} & \multicolumn{1}{c}{$4.21233e-09$} & \multicolumn{1}{c}{} & \multicolumn{1}{c}{$3.16373e-09$} & \multicolumn{1}{c}{} & \multicolumn{1}{c}{$2.26606e-09$} & \multicolumn{1}{c}{} & \multicolumn{1}{c}{$1.60468e-09$} & \multicolumn{1}{c}{} & \multicolumn{1}{c}{$1.11186e-09$} \\ 
\hline
\end{tabular}}}
\end{center}
\caption{\it{Maximum time-error with N=10 and various M by considering TFBM.}}
\label{timeerrTFBM}
\end{table}
\begin{figure}
\centering
\includegraphics[scale=.7]{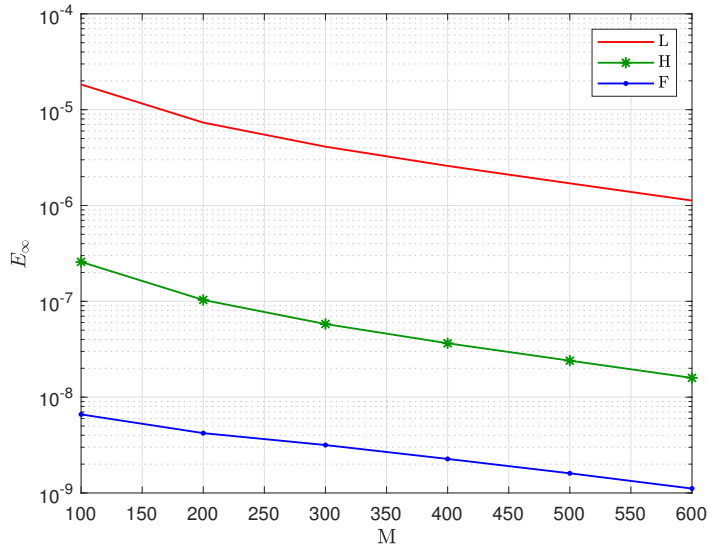}
\caption{\it{Maximum time-error functions with N=10 and various M by considering TFBM.}}
\label{fig_time2TFBM}
\end{figure}
\begin{minipage}{\textwidth}
	\begin{minipage}[b]{0.45\textwidth}
		\centering
	\includegraphics[scale=.7]{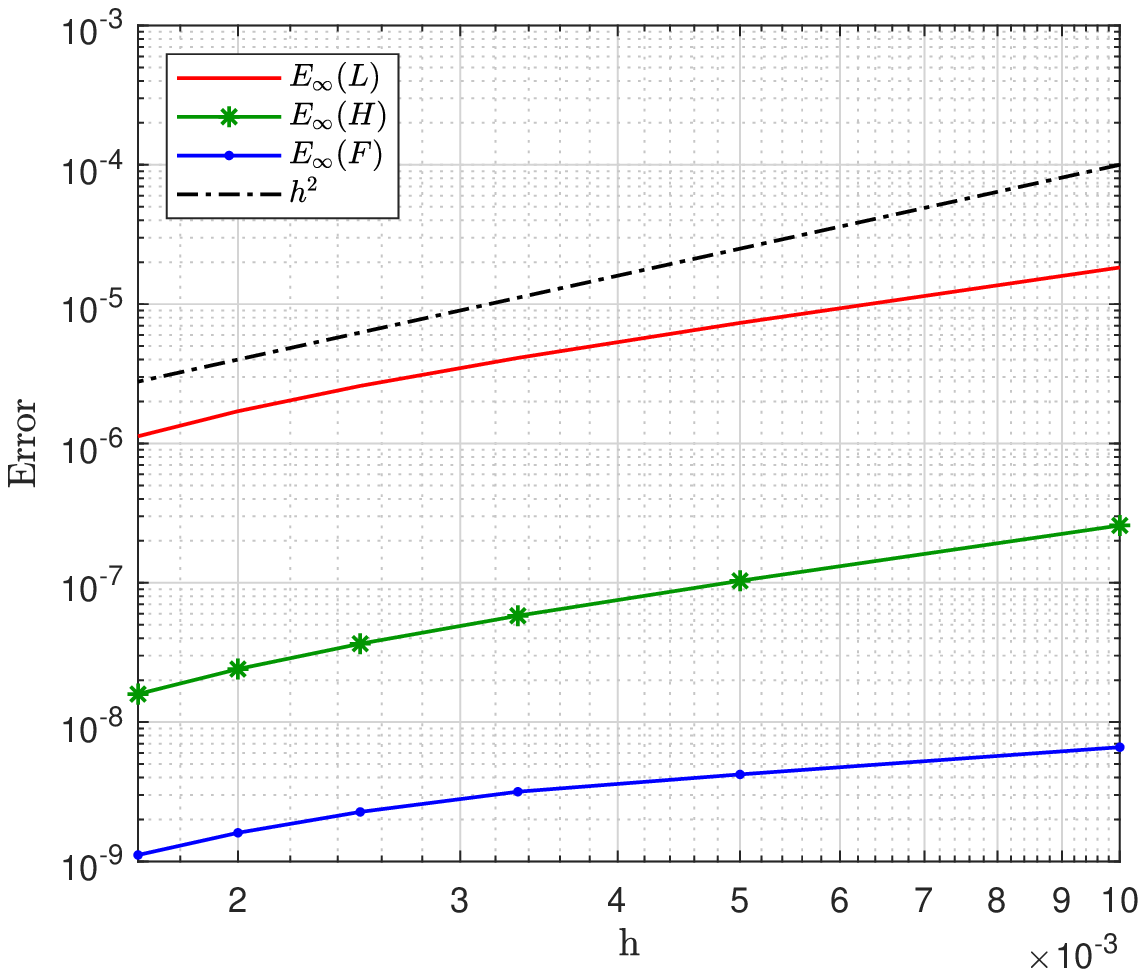}
		\captionof{figure}{\it{ The behaviour  of Maximum time-error with N=10 and various M in Log-Log scale by considering TFBM.}}\label{FTFBM}
	\end{minipage}
	\hfill
	\begin{minipage}[b]{0.45\textwidth}
		\centering
{
\begin{tabular}{lllllll}
\hline
\multicolumn{1}{c}{} & \multicolumn{1}{c}{} & \multicolumn{5}{c}{\text{Rate of convergence for}} \\ 
\cline{3-7}
\multicolumn{1}{c}{M} & \multicolumn{1}{c}{} & \multicolumn{1}{c}{L} & \multicolumn{1}{c}{} & \multicolumn{1}{c}{H} & \multicolumn{1}{c}{} & \multicolumn{1}{c}{F} \\ 
\cline{1-1}\cline{3-3}\cline{5-5}\cline{7-7}
\multicolumn{1}{c}{100} & \multicolumn{1}{c}{} & \multicolumn{1}{c}{-} & \multicolumn{1}{c}{} & \multicolumn{1}{c}{-} & \multicolumn{1}{c}{} & \multicolumn{1}{c}{-} \\ 
\multicolumn{1}{c}{200} & \multicolumn{1}{c}{} & \multicolumn{1}{c}{1.321} & \multicolumn{1}{c}{} & \multicolumn{1}{c}{1.319} & \multicolumn{1}{c}{} & \multicolumn{1}{c}{0.648} \\ 
\multicolumn{1}{c}{300} & \multicolumn{1}{c}{} & \multicolumn{1}{c}{1.426} & \multicolumn{1}{c}{} & \multicolumn{1}{c}{1.425} & \multicolumn{1}{c}{} & \multicolumn{1}{c}{0.706} \\ 
\multicolumn{1}{c}{400} & \multicolumn{1}{c}{} & \multicolumn{1}{c}{1.607} & \multicolumn{1}{c}{} & \multicolumn{1}{c}{1.607} & \multicolumn{1}{c}{} & \multicolumn{1}{c}{1.159} \\ 
\multicolumn{1}{c}{500} & \multicolumn{1}{c}{} & \multicolumn{1}{c}{1.874} & \multicolumn{1}{c}{} & \multicolumn{1}{c}{1.873} & \multicolumn{1}{c}{} & \multicolumn{1}{c}{1.546} \\ 
\multicolumn{1}{c}{600} & \multicolumn{1}{c}{} & \multicolumn{1}{c}{2.271} & \multicolumn{1}{c}{} & \multicolumn{1}{c}{2.270} & \multicolumn{1}{c}{} & \multicolumn{1}{c}{2.012} \\ 
\hline
\end{tabular}}
	\vspace{.8cm}\captionof{table}{\it{The rate of convergence with respect to time variable with N=10 and various M by considering TFBM.}}\label{NTFBM}
	\end{minipage}
\end{minipage}\\

In Table \ref{spaceerrTFBM}, we have presented the maximum space-errors  with $M=100$ and various values of $N$. To better see the space-error of numerical results, Figure \ref{report_1} is presented. 
\begin{table}
\begin{center}
{\begin{tabular}{lllllllll}
\hline
\multicolumn{1}{c}{Error} & \multicolumn{1}{c}{} & \multicolumn{1}{c}{N=2} & \multicolumn{1}{c}{} & \multicolumn{1}{c}{N=4} & \multicolumn{1}{c}{} & \multicolumn{1}{c}{N=6} & \multicolumn{1}{c}{} & \multicolumn{1}{c}{N=8} \\ 
\cline{1-1}\cline{3-3}\cline{5-5}\cline{7-7}\cline{9-9}
\multicolumn{1}{c}{L} & \multicolumn{1}{c}{} & \multicolumn{1}{c}{0.399186e-05} & \multicolumn{1}{c}{} & \multicolumn{1}{c}{0.286885e-05} & \multicolumn{1}{c}{} & \multicolumn{1}{c}{0.190745e-05} & \multicolumn{1}{c}{} & \multicolumn{1}{c}{0.119180e-05} \\ 
\multicolumn{1}{c}{H} & \multicolumn{1}{c}{} & \multicolumn{1}{c}{0.504070e-06} & \multicolumn{1}{c}{} & \multicolumn{1}{c}{0.359887-06} & \multicolumn{1}{c}{} & \multicolumn{1}{c}{0.239578e-06} & \multicolumn{1}{c}{} & \multicolumn{1}{c}{0.149847e-06} \\ 
\multicolumn{1}{c}{F} & \multicolumn{1}{c}{} & \multicolumn{1}{c}{0.293787e-07} & \multicolumn{1}{c}{} & \multicolumn{1}{c}{0.217007e-07} & \multicolumn{1}{c}{} & \multicolumn{1}{c}{0.144762e-07} & \multicolumn{1}{c}{} & \multicolumn{1}{c}{0.905871e-08} \\  
\hline
\end{tabular}}
\end{center}
\caption{\it{Maximum space-error with M=100 and various N by considering TFBM. }}
\label{spaceerrTFBM}
\end{table}
\begin{figure}[!ht]
\centering
\includegraphics[scale=.7]{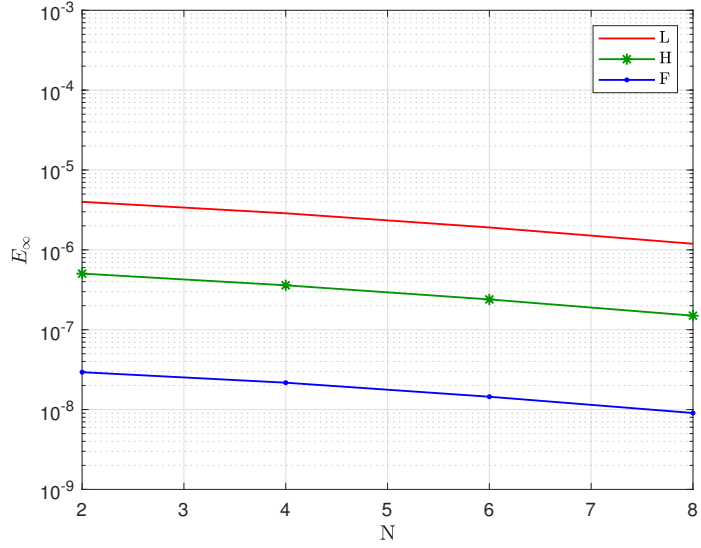}
\caption{\it{Maximum space-error functions with M=100 and various N by considering TFBM.}}
\label{report_1}
\end{figure}
\begin{table}
\begin{center}
\tiny{\begin{tabular}{lllllllllllllllllll}
\hline
CN &  & \multicolumn{1}{c}{N=4} & \multicolumn{1}{c}{} & \multicolumn{1}{c}{N=6} & \multicolumn{1}{c}{} & \multicolumn{1}{c}{N=8} & \multicolumn{1}{c}{} & \multicolumn{1}{c}{N=10} & \multicolumn{1}{c}{} & \multicolumn{1}{c}{N=12} & \multicolumn{1}{c}{} & \multicolumn{1}{c}{N=14} & \multicolumn{1}{c}{} & \multicolumn{1}{c}{N=16} & \multicolumn{1}{c}{} & \multicolumn{1}{c}{N=18} & \multicolumn{1}{c}{} & \multicolumn{1}{c}{N=20} \\ 
\cline{1-1}\cline{3-3}\cline{5-5}\cline{7-7}\cline{9-9}\cline{11-11}\cline{13-13}\cline{15-15}\cline{17-17}\cline{19-19}
Eq. (\ref{Lb}) &  & \multicolumn{1}{c}{272e+04} & \multicolumn{1}{c}{} & \multicolumn{1}{c}{115e+06} & \multicolumn{1}{c}{} & \multicolumn{1}{c}{529e+07} & \multicolumn{1}{c}{} & \multicolumn{1}{c}{258e+09} & \multicolumn{1}{c}{} & \multicolumn{1}{c}{131e+11} & \multicolumn{1}{c}{} & \multicolumn{1}{c}{689e+12} & \multicolumn{1}{c}{} & \multicolumn{1}{c}{\textbf{369e+12}} & \multicolumn{1}{c}{} & \multicolumn{1}{c}{\textbf{202e+15}} & \multicolumn{1}{c}{} & \multicolumn{1}{c}{\textbf{singular}} \\ 
Eq. (\ref{Hb}) &  & \multicolumn{1}{c}{272e+04} & \multicolumn{1}{c}{} & \multicolumn{1}{c}{115e+06} & \multicolumn{1}{c}{} & \multicolumn{1}{c}{529e+07} & \multicolumn{1}{c}{} & \multicolumn{1}{c}{258e+09} & \multicolumn{1}{c}{} & \multicolumn{1}{c}{131e+11} & \multicolumn{1}{c}{} & \multicolumn{1}{c}{689e+12} & \multicolumn{1}{c}{} & \multicolumn{1}{c}{\textbf{369e+12}} & \multicolumn{1}{c}{} & \multicolumn{1}{c}{\textbf{202e+15}} & \multicolumn{1}{c}{} & \multicolumn{1}{c}{\textbf{singular}} \\ 
Eq. (\ref{Fb}) &  & \multicolumn{1}{c}{367e+05} & \multicolumn{1}{c}{} & \multicolumn{1}{c}{303e+07} & \multicolumn{1}{c}{} & \multicolumn{1}{c}{229e+09} & \multicolumn{1}{c}{} & \multicolumn{1}{c}{166e+11} & \multicolumn{1}{c}{} & \multicolumn{1}{c}{117e+13} & \multicolumn{1}{c}{} & \multicolumn{1}{c}{814e+14} & \multicolumn{1}{c}{} & \multicolumn{1}{c}{\textbf{558e+15}} & \multicolumn{1}{c}{} & \multicolumn{1}{c}{\textbf{380e+17}} & \multicolumn{1}{c}{} & \multicolumn{1}{c}{\textbf{singular}}\\ 
\hline
\end{tabular}}
\end{center}
\caption{\it{Condition number of the coefficient matrices (CN).}}\label{condofmat}
\end{table}
It is worth to note that in the numerical results, the discrepancy between an exact value and some approximation to it, is called "maximum error" and is denoted by $E_{\infty}$.
In Table \ref{condofmat}, the condition numbers (CN) of the coefficient matrices of the collocation method are shown. In this table, high values of condition numbers are highlighted in bold. Because of the fact that the condition number of the coefficient matrices grows very fast when $N>10$, unfortunately, the Matlab software can not accurately extract the results. To overcome this difficulty, we need to propose proper trial functions which reduce the condition number significantly. In this case, we decide to consider Legendre polynomials (Jacobi polynomials with $\alpha=\beta=0$) to construct trial functions for the collocation method \cite{doha2018fully}. So, let $L_n(x)$ be the Legendre polynomial of degree $n$ and set 
\begin{equation}\label{aaaa}
p_n(x)=L_n(x)+c_nL_{n+1}(x)+d_nL_{n+2}(x), ~~ n\geq 0,
\end{equation}
where the constants $c_n$ and $d_n$ are uniquely determined in such a way that $p_n(x)$ satisfies the boundary conditions; in other word, $\dfrac{\partial p_n}{\partial x}(\pm 1)=0, ~~\forall n\geq 0$.\\
According to the features of Legendre polynomials we have
\begin{eqnarray}\nonumber
&&\dfrac{\partial p_n}{\partial x}(\pm 1)=\dfrac{\partial L_n}{\partial x}(\pm 1)+c_n\dfrac{\partial L_{n+1}}{\partial x}(\pm 1)+d_n\dfrac{\partial L_{n+2}}{\partial x}(\pm 1)=\\\label{ppp}
&& \dfrac{1}{2} (\pm 1)^{n-1} n(n+1)+c_n\dfrac{1}{2} (\pm 1)^{n} (n+1)(n+2)+d_n\dfrac{1}{2} (\pm 1)^{n+1} (n+2)(n+3)=0,
\end{eqnarray}
therefore, by solving (\ref{ppp}) one can easily conclude that
\begin{equation}
c_n=0,~~ d_n=-\dfrac{n(n+1)}{(n+2)(n+3)}.
\end{equation}
Thus (\ref{aaaa}) becomes as follows
\begin{equation}
p_n(x)=L_n(x)-\dfrac{n(n+1)}{(n+2)(n+3)}L_{n+2}(x), ~~ n\geq 0.
\end{equation}
So we can approximate the functions $L(\rho,t) ,H(\rho,t), F(\rho,t)$  in the form of (\ref{formbasis}) as follows
\begin{equation*}
L_{n+1}^N(\rho)=\sum_{i=0}^N l_i^{n+1} p_i(\rho) ,~~H_{n+1}^N(\rho)=\sum_{i=0}^N h_i^{n+1} p_i(\rho) ,~~F_{n+1}^N(\rho)=\sum_{i=0}^N f_i^{n+1} p_i(\rho),
\end{equation*}
where
\begin{equation}\label{tf}
p_i(\rho)= L_i (\rho) -\dfrac{(i+1)i}{(i+3)(i+2)}L_{i+2} (\rho) , ~~i=0,\ldots, N,
\end{equation}
which we will denote by "TFBL" (Trial Functions Based on Legendre polynomials).
\begin{remark} The scaling factors in the trial functions (\ref{tf}) play the role of precondition factor for the collocation matrices and reduce the condition number. \cite{huang2018spectral,khosravian2017generalized}.
\end{remark}
\begin{table}
\begin{center}
\scriptsize{\begin{tabular}{llllllllllllll}
\hline
{Error of} &  & \multicolumn{1}{c}{M=100} & \multicolumn{1}{c}{} & \multicolumn{1}{c}{M=200} & \multicolumn{1}{c}{} & \multicolumn{1}{c}{M=300} & \multicolumn{1}{c}{} & \multicolumn{1}{c}{M=400} & \multicolumn{1}{c}{} & \multicolumn{1}{c}{M=500} & \multicolumn{1}{c}{} & \multicolumn{1}{c}{M=600}\\ 
\cline{1-1}\cline{3-3}\cline{5-5}\cline{7-7}\cline{9-9}\cline{11-11}\cline{13-13}
\multicolumn{1}{c}{L} &  & \multicolumn{1}{c}{$2.21985e-06$} & \multicolumn{1}{c}{} & \multicolumn{1}{c}{$9.74874e-07$} & \multicolumn{1}{c}{} & \multicolumn{1}{c}{$5.66407e-07$} & \multicolumn{1}{c}{} & \multicolumn{1}{c}{$3.63391e-07$} & \multicolumn{1}{c}{} & \multicolumn{1}{c}{$2.41969e-07$} & \multicolumn{1}{c}{} & \multicolumn{1}{c}{$1.61184e-07$}\\ 
\multicolumn{1}{c}{H} &  & \multicolumn{1}{c}{$8.08703e-06$} & \multicolumn{1}{c}{} & \multicolumn{1}{c}{$3.67228e-06$} & \multicolumn{1}{c}{} & \multicolumn{1}{c}{$2.13760e-06$} & \multicolumn{1}{c}{} & \multicolumn{1}{c}{$1.37270e-06$} & \multicolumn{1}{c}{} & \multicolumn{1}{c}{$9.14550e-07$} & \multicolumn{1}{c}{} & \multicolumn{1}{c}{$6.09439e-07$}\\ 
\multicolumn{1}{c}{F} &  & \multicolumn{1}{c}{$3.89421e-06$} & \multicolumn{1}{c}{} & \multicolumn{1}{c}{$1.68696e-06$} & \multicolumn{1}{c}{} & \multicolumn{1}{c}{$9.75669e-07$} & \multicolumn{1}{c}{} & \multicolumn{1}{c}{$6.24533e-07$} & \multicolumn{1}{c}{} & \multicolumn{1}{c}{$4.15285e-07$} & \multicolumn{1}{c}{} & \multicolumn{1}{c}{$2.76382e-07$} \\ 
\hline
\end{tabular}}
\end{center}
\caption{\it{Maximum time-error with N=50 and various M by considering TFBL.}}\label{timeerrorth}
\end{table}
\begin{figure}
\centering
\includegraphics[scale=.75]{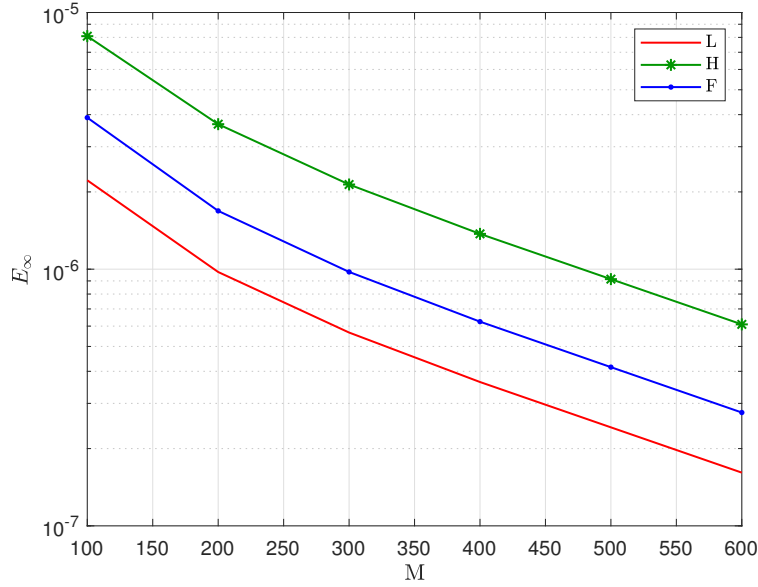}
\caption{\it{Maximum time-error with N=50 and various M by considering TFBL.}}
\label{ertime}
\end{figure}
\begin{minipage}{\textwidth}
	\begin{minipage}[b]{0.5\textwidth}
		\centering
	\includegraphics[scale=.7]{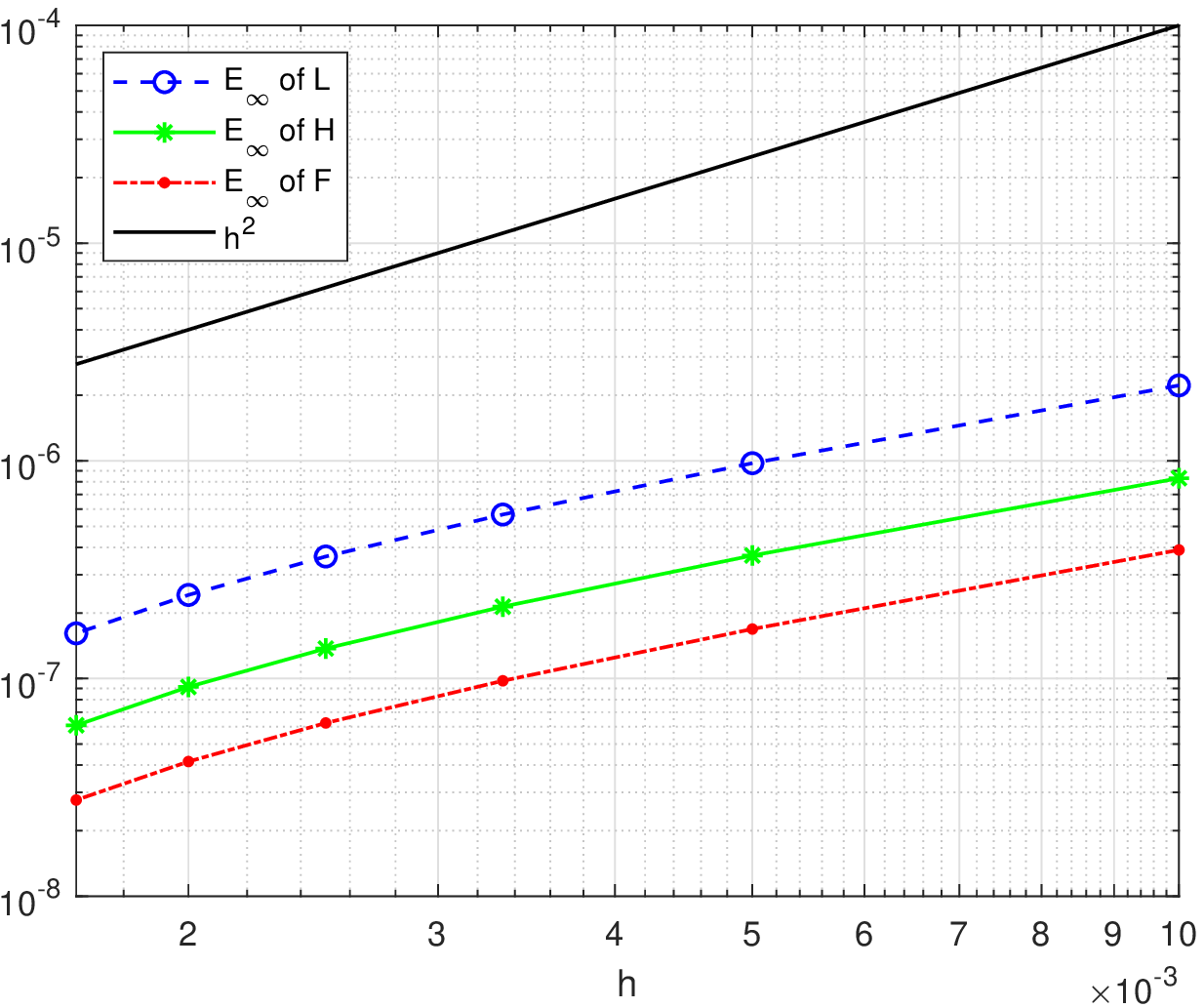}
		\captionof{figure}{\it{The behaviour of maximum time error with N=50 and various M in Log-Log scale by considering TFBL.}}\label{nesbat_orthof}
	\end{minipage}
	\hfill
	\begin{minipage}[b]{0.4\textwidth}
		\centering
{
\begin{tabular}{lllllll}
\hline
\multicolumn{1}{c}{} & \multicolumn{1}{c}{} & \multicolumn{5}{c}{\text{Rate of convergence for}} \\ 
\cline{3-7}
\multicolumn{1}{c}{M} & \multicolumn{1}{c}{} & \multicolumn{1}{c}{L} & \multicolumn{1}{c}{} & \multicolumn{1}{c}{H} & \multicolumn{1}{c}{} & \multicolumn{1}{c}{F} \\ 
\cline{1-1}\cline{3-3}\cline{5-5}\cline{7-7}
\multicolumn{1}{c}{100} & \multicolumn{1}{c}{} & \multicolumn{1}{c}{-} & \multicolumn{1}{c}{} & \multicolumn{1}{c}{-} & \multicolumn{1}{c}{} & \multicolumn{1}{c}{-} \\ 
\multicolumn{1}{c}{200} & \multicolumn{1}{c}{} & \multicolumn{1}{c}{1.118} & \multicolumn{1}{c}{} & \multicolumn{1}{c}{1.179} & \multicolumn{1}{c}{} & \multicolumn{1}{c}{1.206} \\ 
\multicolumn{1}{c}{300} & \multicolumn{1}{c}{} & \multicolumn{1}{c}{1.339} & \multicolumn{1}{c}{} & \multicolumn{1}{c}{1.334} & \multicolumn{1}{c}{} & \multicolumn{1}{c}{1.350} \\ 
\multicolumn{1}{c}{400} & \multicolumn{1}{c}{} & \multicolumn{1}{c}{1.542} & \multicolumn{1}{c}{} & \multicolumn{1}{c}{1.539} & \multicolumn{1}{c}{} & \multicolumn{1}{c}{1.550} \\ 
\multicolumn{1}{c}{500} & \multicolumn{1}{c}{} & \multicolumn{1}{c}{1.822} & \multicolumn{1}{c}{} & \multicolumn{1}{c}{1.822} & \multicolumn{1}{c}{} & \multicolumn{1}{c}{1.828} \\ 
\multicolumn{1}{c}{600} & \multicolumn{1}{c}{} & \multicolumn{1}{c}{2.228} & \multicolumn{1}{c}{} & \multicolumn{1}{c}{2.226} & \multicolumn{1}{c}{} & \multicolumn{1}{c}{2.223} \\ 
\hline
\end{tabular}}
		\vspace{.8cm}\captionof{table}{\it{ The rate of convergence with respect to the time variable with N=50 and various M by considering TFBL.}}\label{nesbat_orthot}
	\end{minipage}
\end{minipage}
\begin{table}
\begin{center}
\scriptsize{\begin{tabular}{lllllllllll}
\hline
\multicolumn{1}{c}{Error of} & \multicolumn{1}{c}{} & \multicolumn{1}{c}{N=10} & \multicolumn{1}{c}{} & \multicolumn{1}{c}{N=20} & \multicolumn{1}{c}{} & \multicolumn{1}{c}{N=40} & \multicolumn{1}{c}{} & \multicolumn{1}{c}{N=80} & \multicolumn{1}{c}{} & \multicolumn{1}{c}{N=100} \\ 
\cline{1-1}\cline{3-3}\cline{5-5}\cline{7-7}\cline{9-9}\cline{11-11}
\multicolumn{1}{c}{L} & \multicolumn{1}{c}{} & \multicolumn{1}{c}{$1.53065e-10$} & \multicolumn{1}{c}{} & \multicolumn{1}{c}{$7.04107e-12$} & \multicolumn{1}{c}{} & \multicolumn{1}{c}{$6.98306e-13$} & \multicolumn{1}{c}{} & \multicolumn{1}{c}{$1.21789e-13$} & \multicolumn{1}{c}{} & \multicolumn{1}{c}{$7.34745e-14$} \\ 
\multicolumn{1}{c}{H} & \multicolumn{1}{c}{} & \multicolumn{1}{c}{$2.96583e-11$} & \multicolumn{1}{c}{} & \multicolumn{1}{c}{$8.76799e-13$} & \multicolumn{1}{c}{} & \multicolumn{1}{c}{$2.18054e-13$} & \multicolumn{1}{c}{} & \multicolumn{1}{c}{$8.28391e-14$} & \multicolumn{1}{c}{} & \multicolumn{1}{c}{$5.54617e-14$} \\ 
\multicolumn{1}{c}{F} & \multicolumn{1}{c}{} & \multicolumn{1}{c}{$1.44016e-11$} & \multicolumn{1}{c}{} & \multicolumn{1}{c}{$6.17966e-12$} & \multicolumn{1}{c}{} & \multicolumn{1}{c}{$3.09801e-12$} & \multicolumn{1}{c}{} & \multicolumn{1}{c}{$1.69595e-12$} & \multicolumn{1}{c}{} & \multicolumn{1}{c}{$1.34437e-12$} \\ 
\hline
\end{tabular}}
\end{center}
\caption{\it{Maximum space-error with M=200 and various N by considering TFBL. }}
\label{spaceerrorth}
\end{table}
\begin{figure}
\centering
\includegraphics[scale=.8]{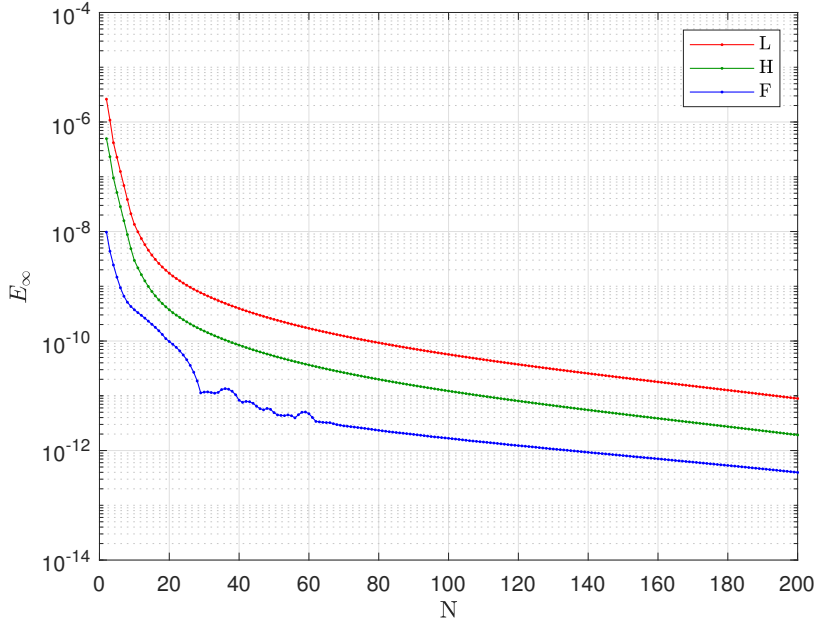}
\caption{\it{Maximum space error function for M=200 and various N.}}
\label{erspace}
\end{figure}

It should be noted that in order to use Legendre polynomials, we map the domain of the problem (\ref{fin1})-(\ref{fin2}) to $[-1,1]$.\\
We also point out that the Gauss quadrature points $\{x_i^{0,0}\}_{i=1}^{N}$ are considered as the collocation points.
As mentioned in TFBM case, because of the stability and convergence of the presented method, we consider the solution of the problem with $N=100$ and $M=1000$ as an exact solution.
In Table \ref{timeerrorth}, we have presented the maximum time-error $N=51$ and various values of $M$. It is observed that the time-error of the presented approach numerically, we report the obtained results in Figure \ref{ertime}. As we can see from Table \ref{nesbat_orthot} and Figure \ref{nesbat_orthof}, the non-classical finite difference method presented in (\ref{timed}) has almost $\mathcal{O}(h^2)$ error, which verifies our theoretical results presented in convergence Theorem \ref{convergetheo}. In Table \ref{spaceerrorth}, we illustrate the maximum space-errors $M=200$ and various values of $N$ by considering the solution of the problem with $N=200$ and $M=200$ as an exact solution. To better observe the space-error of numerical results, Figure \ref{erspace} is presented.
Also, the condition number of the coefficient matrices constructed by collocation method for equations (\ref{Lb})-(\ref{vb}) are shown in Table \ref{condt} and Figure \ref{condf}; which shows that, as we expect from TFBL, the condition number of matrices do not increase significantly by increasing $N$ even for $N=150$ . For a better comparison of condition numbers in both TFBM and TFBL cases, Figure \ref{CONCOM} is presented. One can easily see that using Legendre polynomials, the condition number of the coefficient matrices in the collocation method is significantly less than TFBM.\\
Now, in this position, the examination of  the numerical solutions from the perspective of biology and simulation is reported by presenting the rate of tumor growth with three various values of pairs $(L_0, H_0)$. The results are presented in Table \ref{radious} and Figure \ref{radius_of_plaque}, and they are compared to the risk map illustrated in figure \ref{riskmap} and 
Retrieved from \cite{friedman2015free}. As we expect, the level of $L_0$ and $H_0$ in blood directly affects  the growth and shrink of the plaque, that means for the values of $(L_0, H_0)$ below the "zero growth", the plaque grows as shown in Figure \ref{arterycom1}, and  for the values of $(L_0, H_0)$ above the "zero growth" the plaque shrinks, as shown in Figure \ref{arterycom2}. To better see the radius changes, a small part of the plaque in the vessel is magnified and is presented in the figures mentioned above. It is noteworthy that arrows in these two figures indicate the direction of growth or shrink of the plaque.
\begin{minipage}{\textwidth}
	\begin{minipage}[b]{0.40\textwidth}
		\centering
	\includegraphics[scale=.8]{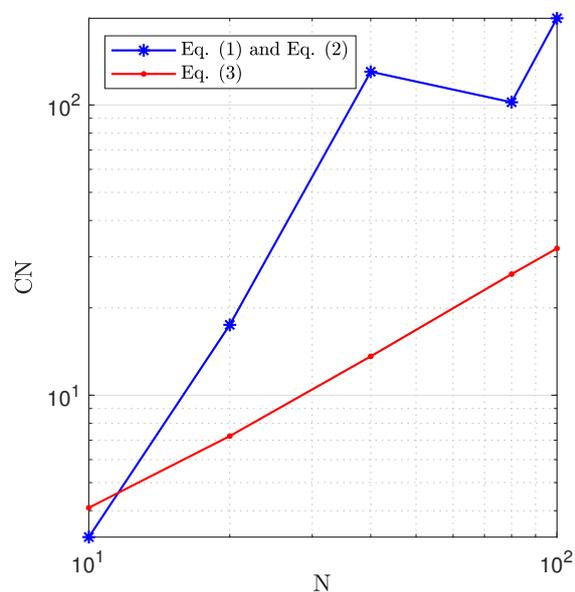}
		\captionof{figure}{\it{Condition number of the coefficient matrices (CN).}}\label{condf}
	\end{minipage}
	\hfill
	\begin{minipage}[b]{0.5\textwidth}
		\centering
{\begin{tabular}{|l|l|l|l|}
\hline
\multicolumn{1}{|c|}{N} & \multicolumn{1}{c|}{Eq. (\ref{Lb})} & \multicolumn{1}{c|}{Eq. (\ref{Hb})} & \multicolumn{1}{c|}{Eq. (\ref{Fb})} \\ 
\hline
\multicolumn{1}{|c|}{10} & \multicolumn{1}{c|}{3.245} & \multicolumn{1}{c|}{3.245} & \multicolumn{1}{c|}{4.096} \\ 
\hline
\multicolumn{1}{|c|}{20} & \multicolumn{1}{c|}{17.475} & \multicolumn{1}{c|}{17.475} & \multicolumn{1}{c|}{7.230} \\ 
\hline
\multicolumn{1}{|c|}{40} & \multicolumn{1}{c|}{1.302e+02} & \multicolumn{1}{c|}{1.302e+02} & \multicolumn{1}{c|}{13.61} \\ 
\hline
\multicolumn{1}{|c|}{80} & \multicolumn{1}{c|}{1.022e+02} & \multicolumn{1}{c|}{1.022e+02} & \multicolumn{1}{c|}{26.14} \\ 
\hline
\multicolumn{1}{|c|}{100} & \multicolumn{1}{c|}{1.990e+02} & \multicolumn{1}{c|}{1.990e+02} & \multicolumn{1}{c|}{32.05} \\ 
\hline
\end{tabular}}
		\vspace{.9cm}\captionof{table}{\it{Condition number of the coefficient matrices.}}\label{condt}
	\end{minipage}
\end{minipage}\\
\begin{figure}
\centering
\includegraphics[scale=.8]{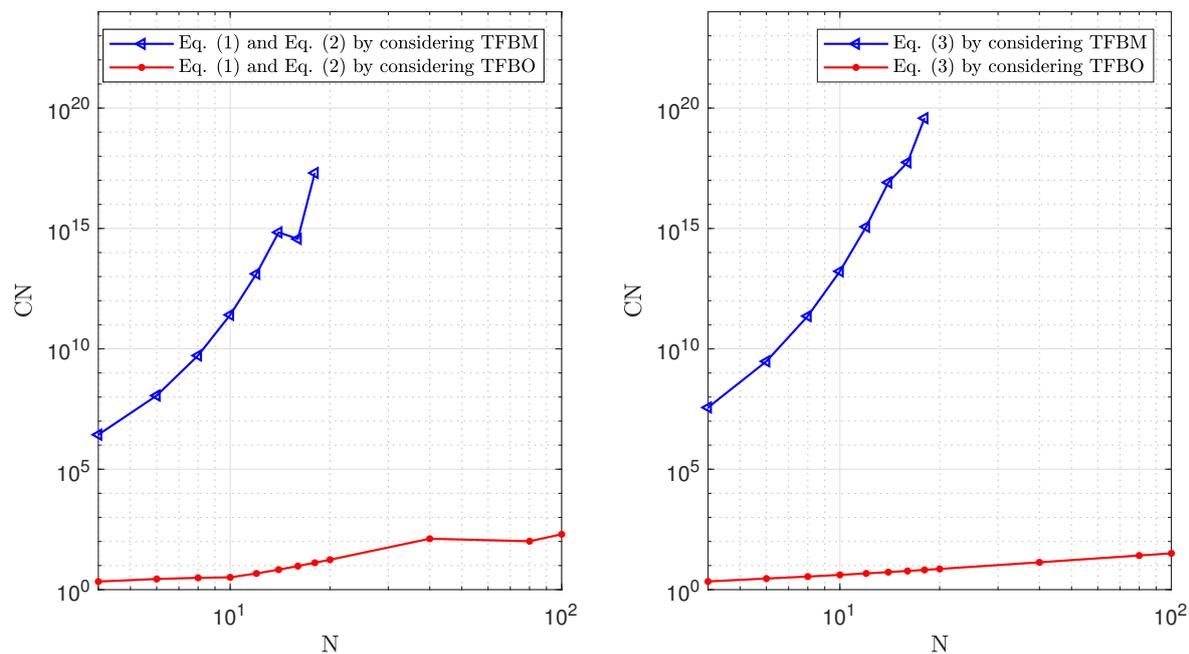}
\caption{\it{Comparison of condition numbers of the coefficient matrices in both TFBM and TFBL cases.}}
\label{CONCOM}
\end{figure}
\begin{table}
\begin{center}
\scriptsize{\begin{tabular}{lllllllllllllll}
\hline
$(L_0, H_0)$ &  & $0\,\,$ day &  & $10\,\,$ day &  & $20\,\,$ day &  & $30\,\,$ day &  & $40\,\,$ day &  & $50\,\,$ day &  & $60\,\,$ day \\ 
\cline{1-1}\cline{3-3}\cline{5-5}\cline{7-7}\cline{9-9}\cline{11-11}\cline{13-13}\cline{15-15}
$(150,45)$ &  & $0.900000$ &  & $0.897461$ &  & $0.894874$ &  & $0.892264$ &  & $ 0.889634$ &  & $0.886984$ &  & $0.884316$ \\ 
$(120,60)$ &  & $0.900000$ &  & $0.907537$ &  & $0.914481$ &  & $0.920822$ &  & $0.926622$ &  & $0.931935$ &  & $0.936810$ \\ 
$(46.5,139.5)$ &  & $0.900000$ &  & $0.900008$ &  & $0.900018$ &  & $0.900027$ &  & $0.900037$ &  & $0.900046$ &  & $0.900055$ \\ 
\hline
\end{tabular}}
\end{center}
\caption{\it{Variation of the radius of the plaque toward the various level of $L_0$ and $H_0$ in blood during the days. }}
\label{radious}
\end{table}
\begin{figure}
\centering
\includegraphics[scale=.8]{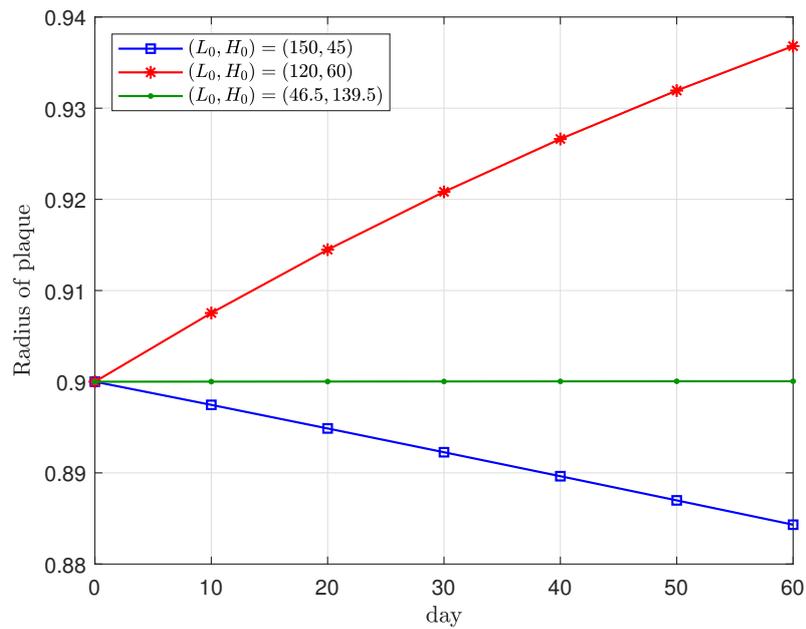}
\caption{\it{Variation of the radius of the plaque toward the various level of $L_0$ and $H_0$ in blood during the days. }}
\label{radius_of_plaque}
\end{figure}
\begin{figure}
\centering
\includegraphics[scale=.5]{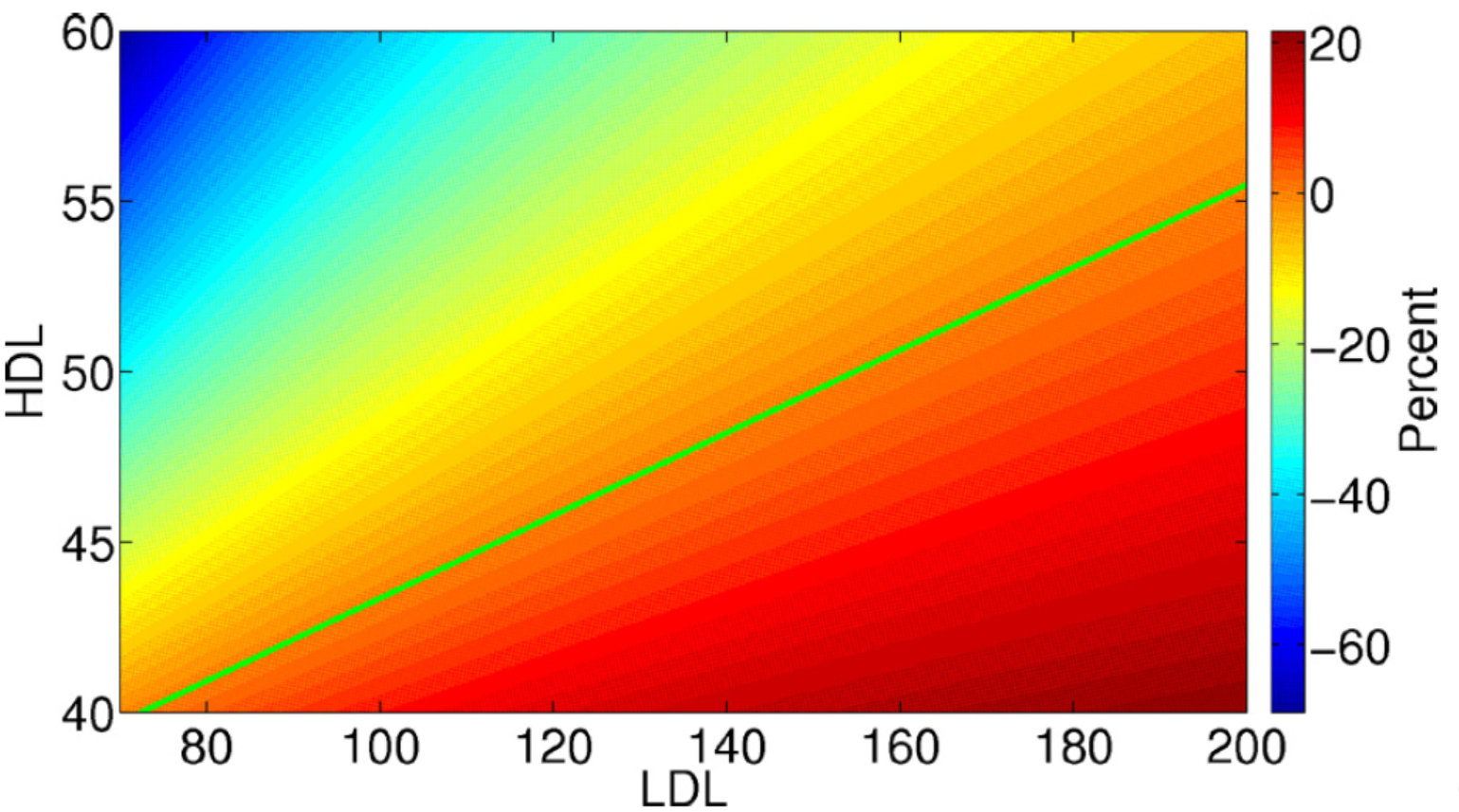}
\caption{\it{Risk Map. The values of LDL and HDL are measured in $mg/dl=10^{-4}g/cm3$ \cite{friedman2015free}.}}
\label{riskmap}
\end{figure}
\begin{figure}
\centering
\includegraphics[scale=.8]{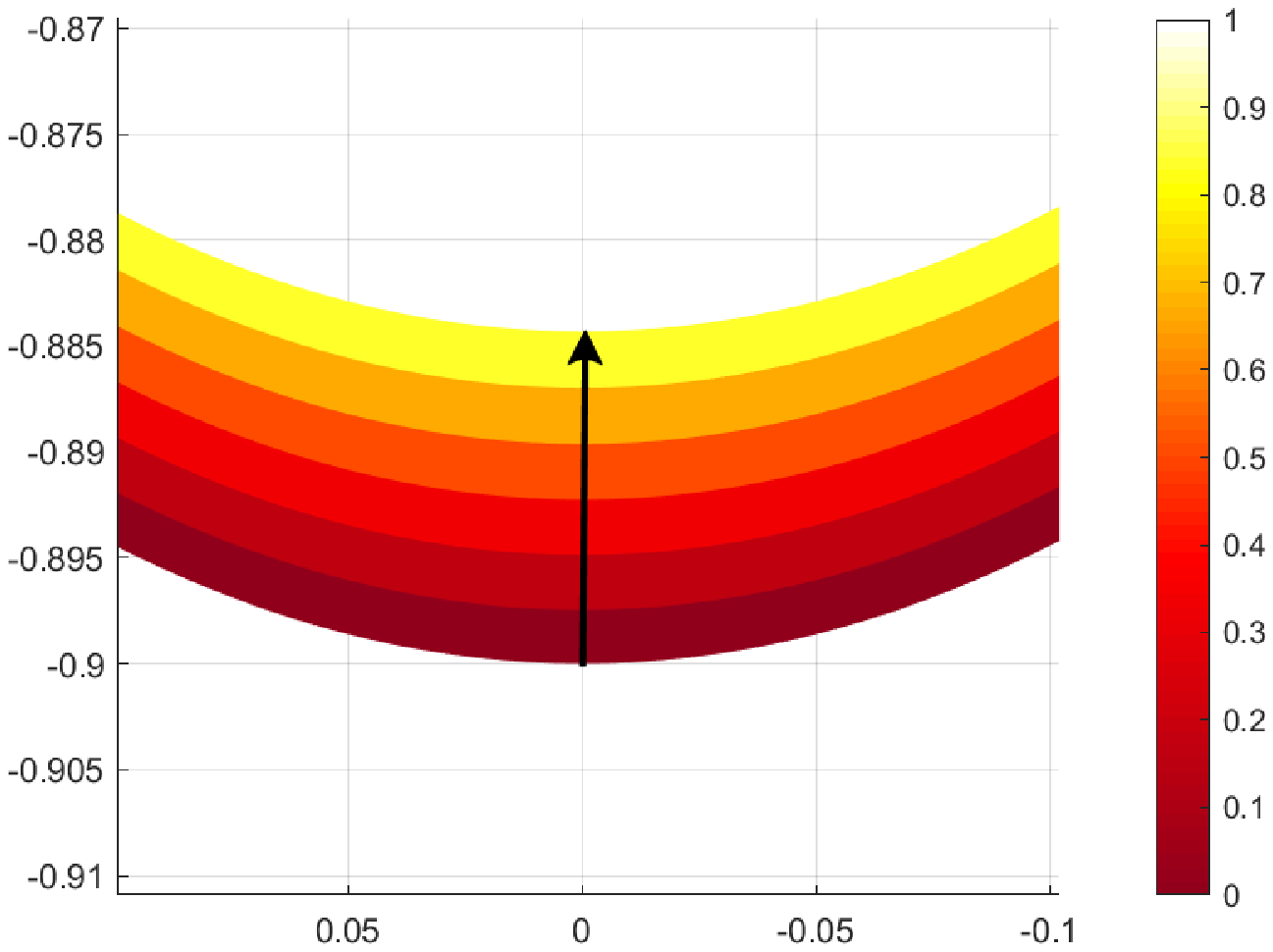}
\caption{\it{Variation of the radius of the plaque with $L_0=150 $ and $H_0=45 $ in a small part of the artery.}}
\label{arterycom1}
\end{figure}
\begin{figure}
\centering
\includegraphics[scale=.8]{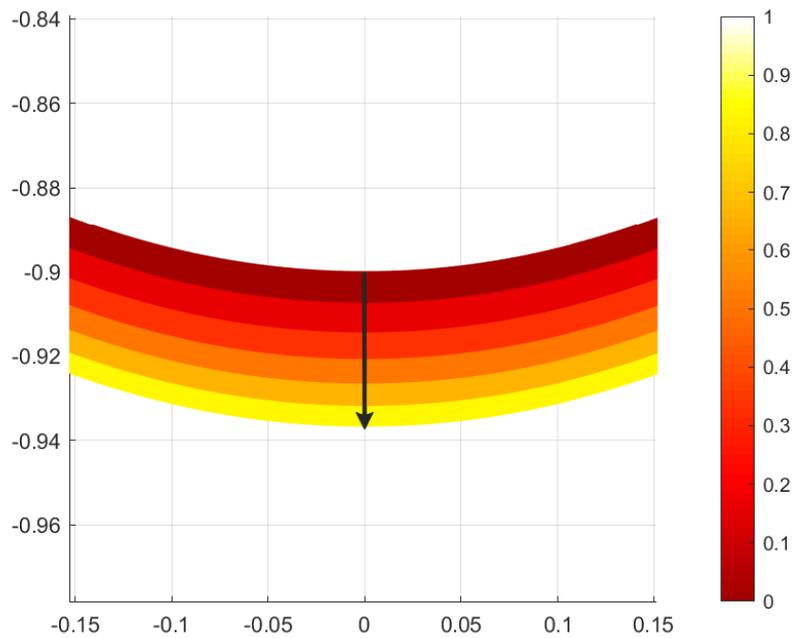}
\caption{\it{Variation of the radius of the plaque with $L_0=120 $ and $H_0=60 $ in a small part of the artery.}}
\label{arterycom2}
\end{figure}
\newpage
\section{Conclusion}
There are many mathematical methods for solving biological models. However, mathematical modeling often produces nonlinear differential equations. Therefore, we cannot always obtain the exact solution of these equations; so, developing numerical techniques to solve these equations is required. In this study, a mathematical model of Atherosclerosis is solved numerically and the convergence and stability analysis are presented. For the reader’s convenience, we give the main contributions of this study as follows \\
$\bullet$ In this article, we use the front fixing method to convert the moving boundary problem (\ref{e1})-(\ref{e3}) to a fix one (\ref{eqf1})-(\ref{eqfe}), because classical numerical methods are not effective to solve free and moving boundary problems and moreover, because of the suitability of the front fixing method to apply to problems with regular geometries along with the mesh-based methods.\\
$\bullet$ 
To achieve more comfortable results for numerical analysis, we have simplified the model by changing the mix boundary condition  of the equations (\ref{eqf1})-(\ref{eqf2}) to a Neumann one by applying a suitable change of variables (\ref{vc1})-(\ref{vc3}). \\
$\bullet$ To solve nonlinear systems, one way is to linearize the equations of the system and then, solve the linear one by classical methods. Since the model studied in this article is nonlinear, we have used Taylor theorem simultaneously both to linearize the equations and for constructing new second-order non-classical discretization formula to approximate time discretization (Finite difference method).\\
$\bullet$ In this article, we use spectral collocation method in space. To construct trial functions which satisfy the boundary conditions, the first way that comes to mind is to use a linear combination of monomial polynomials. But, there are some  problems to use this kind of polynomials. some of these problems are as follows\\
$~~~~$1. The condition number of collocation matrices increases significantly by increasing the size of the matrices for $N\geq 10$ and because of that, we are limited to increase the collocation points and achieve arbitrary errors. (See Table \ref{condofmat} and Table \ref{spaceerrTFBM})\\
$~~~~$2. The high error of collocation method affects the error of the finite difference method (See Table \ref{timeerrTFBM}).\\
To the above reasons, we use a linear combination of classical orthogonal polynomials or orthogonal functions to construct trial functions.\\
 So, using orthogonal polynomials, the condition number of collocation matrices does not increase significantly by increasing the collocation points even for $N=150$ (See Table \ref{condt}, Figure \ref{condf} and Figure \ref{CONCOM}), and because of that, the error of the collocation method decreases compared to the TFBM case (See Table \ref{spaceerrorth} and Figure \ref{erspace}).\\
$\bullet$ Moreover, the convergence and stability of the presented method is proved (See Theorem \ref{convergetheo} and Theorem \ref{stabtheo}) and the order of convergence is presented. Numerical results in both TFBM and TFBL cases show that the finite difference method displays an $\mathcal{O}(h^2)$ order of convergence, as we expect from convergence Theorem \ref{convergetheo} (See Figure \ref{FTFBM} and Figure \ref{nesbat_orthof}), and the space-error shows that using the collocation method, the results are converging to the exact solution.\\
$\bullet$ As illustrated in Figure \ref{radius_of_plaque} and Table \ref{radious}, we present the effect of the level of $L_0$ and $H_0$ in blood on the growth and shrink of the plaque. It is easily can be seen that for $(L_0, H_0)=(150, 45)$ (below the "zero growth"), the plaque grows and  for $(L_0, H_0)=(120, 60)$ (above the "zero growth") the plaque shrinks.


\begin{thebibliography}{10}

\bibitem{youttt}
Inflammation in atherosclerotic plaque formation [video file] youtube; [cited
  2013 jan 19]. retrieved from:.
\newblock {https://www.youtube.com/watch?v=Na6-kP9VYCU}.

\bibitem{anderson2000mathematical}
Alexander~RA Anderson, Mark~AJ Chaplain, E~Luke Newman, Robert~JC Steele, and
  Alastair~M Thompson.
\newblock Mathematical modelling of tumour invasion and metastasis.
\newblock {\em Computational and mathematical methods in medicine},
  2(2):129--154, 2000.

\bibitem{bankson1993role}
DD~Bankson, M~Kestin, and N~Rifai.
\newblock Role of free radicals in cancer and atherosclerosis.
\newblock {\em Clinics in laboratory medicine}, 13(2):463--480, 1993.

\bibitem{bentzon2014mechanisms}
Jacob~Fog Bentzon, Fumiyuki Otsuka, Renu Virmani, and Erling Falk.
\newblock Mechanisms of plaque formation and rupture.
\newblock {\em Circulation research}, 114(12):1852--1866, 2014.

\bibitem{cajori1928early}
Florian Cajori.
\newblock The early history of partial differential equations and of partial
  differentiation and integration.
\newblock {\em The American Mathematical Monthly}, 35(9):459--467, 1928.

\bibitem{calvez2009mathematical}
Vincent Calvez, Abderrhaman Ebde, Nicolas Meunier, and Annie Raoult.
\newblock Mathematical modelling of the atherosclerotic plaque formation.
\newblock In {\em ESAIM: Proceedings}, volume~28, pages 1--12. EDP Sciences,
  2009.

\bibitem{calvez2010mathematical}
Vincent Calvez, Jean~Gabriel Houot, Nicolas Meunier, Annie Raoult, and Gabriela
  Rusnakova.
\newblock Mathematical and numerical modeling of early atherosclerotic lesions.
\newblock In {\em ESAIM: Proceedings}, volume~30, pages 1--14. EDP Sciences,
  2010.

\bibitem{canuto2006spectral}
Claudio Canuto, M~Youssuff Hussaini, Alfio Quarteroni, and Thomas~A Zang.
\newblock {\em Spectral methods}.
\newblock Springer, 2006.

\bibitem{chaplain1995mathematical}
Mark~AJ Chaplain, Susan~M Giles, BD~Sleeman, and Richard~J Jarvis.
\newblock A mathematical analysis of a model for tumour angiogenesis.
\newblock {\em Journal of mathematical biology}, 33(7):744--770, 1995.

\bibitem{culshaw2003mathematical}
Rebecca~V Culshaw, Shigui Ruan, and Glenn Webb.
\newblock A mathematical model of cell-to-cell spread of hiv-1 that includes a
  time delay.
\newblock {\em Journal of mathematical biology}, 46(5):425--444, 2003.

\bibitem{deng2004mathematical}
Zhong-Shan Deng and Jing Liu.
\newblock Mathematical modeling of temperature mapping over skin surface and
  its implementation in thermal disease diagnostics.
\newblock {\em Computers in biology and medicine}, 34(6):495--521, 2004.

\bibitem{doha2018fully}
EH~Doha, WM~Abd-Elhameed, and YH~Youssri.
\newblock Fully legendre spectral galerkin algorithm for solving linear
  one-dimensional telegraph type equation.
\newblock {\em International Journal of Computational Methods}, page 1850118,
  2018.

\bibitem{eberhard2006numerical}
Jens~P Eberhard and Peter Frolkovic.
\newblock Numerical simulation of growth of an atherosclerotic lesion with a
  moving boundary.
\newblock 2006.

\bibitem{esmaili2017optimal}
Sakine Esmaili and Mohammad~Reza Eslahchi.
\newblock Optimal control for a parabolic--hyperbolic free boundary problem
  modeling the growth of tumor with drug application.
\newblock {\em Journal of Optimization Theory and Applications},
  173(3):1013--1041, 2017.

\bibitem{esmaili2018application}
Sakine Esmaili and MR~Eslahchi.
\newblock Application of fixed point-collocation method for solving an optimal
  control problem of a parabolic--hyperbolic free boundary problem modeling the
  growth of tumor with drug application.
\newblock {\em Computers \& Mathematics with Applications}, 75(7):2193--2216,
  2018.

\bibitem{flegg2012wound}
Jennifer~A Flegg, Helen~M Byrne, Mark~B Flegg, and DL~Sean McElwain.
\newblock Wound healing angiogenesis: the clinical implications of a simple
  mathematical model.
\newblock {\em Journal of theoretical biology}, 300:309--316, 2012.

\bibitem{franks2003modelling}
SJ~Franks, HM~Byrne, JR~King, JCE Underwood, and CE~Lewis.
\newblock Modelling the early growth of ductal carcinoma in situ of the breast.
\newblock {\em Journal of mathematical biology}, 47(5):424--452, 2003.

\bibitem{2015mathematical}
Avner Friedman and Wenrui Hao.
\newblock A mathematical model of atherosclerosis with reverse cholesterol
  transport and associated risk factors.
\newblock {\em Bulletin of mathematical biology}, 77(5):758--781, 2015.

\bibitem{friedman2015free}
Avner Friedman, Wenrui Hao, and Bei Hu.
\newblock A free boundary problem for steady small plaques in the artery and
  their stability.
\newblock {\em Journal of Differential Equations}, 259(4):1227--1255, 2015.

\bibitem{gautschi1997numerical}
Walter Gautschi.
\newblock {\em Numerical analysis}.
\newblock Springer Science \& Business Media, 1997.

\bibitem{gerrity1981role}
Ross~G Gerrity.
\newblock The role of the monocyte in atherogenesis: I. transition of
  blood-borne monocytes into foam cells in fatty lesions.
\newblock {\em The American journal of pathology}, 103(2):181, 1981.

\bibitem{hao2014ldl}
Wenrui Hao and Avner Friedman.
\newblock The ldl-hdl profile determines the risk of atherosclerosis: a
  mathematical model.
\newblock {\em PloS one}, 9(3):e90497, 2014.

\bibitem{hardy1908mendelian}
Godfrey~Harold Hardy et~al.
\newblock Mendelian proportions in a mixed population.
\newblock {\em Classic papers in genetics. Prentice-Hall, Inc.: Englewood
  Cliffs, NJ}, pages 60--62, 1908.

\bibitem{harvey1737exercitatio}
William Harvey.
\newblock {\em Exercitatio anatomica de motu cordis et sanguinis in
  animalibus}, volume~2.
\newblock 1737.

\bibitem{holmes2000mathematical}
MJ~Holmes and BD~Sleeman.
\newblock A mathematical model of tumour angiogenesis incorporating cellular
  traction and viscoelastic effects.
\newblock {\em Journal of theoretical biology}, 202(2):95--112, 2000.

\bibitem{huang2018spectral}
Can Huang, Zhimin Zhang, and Qingshuo Song.
\newblock Spectral methods for substantial fractional differential equations.
\newblock {\em Journal of Scientific Computing}, 74(3):1554--1574, 2018.

\bibitem{islam2016mathematical}
Md~Hamidul Islam and Peter Johnston.
\newblock A mathematical model for atherosclerotic plaque formation and
  arterial wall remodelling.
\newblock {\em ANZIAM Journal}, 57:320--345, 2016.

\bibitem{javierre2009mathematical}
E~Javierre, FJ~Vermolen, C~Vuik, and S~Van~der Zwaag.
\newblock A mathematical analysis of physiological and morphological aspects of
  wound closure.
\newblock {\em Journal of mathematical biology}, 59(5):605--630, 2009.

\bibitem{khosravian2017generalized}
Hassan Khosravian-Arab, Mehdi Dehghan, and MR~Eslahchi.
\newblock Generalized bessel functions: Theory and their applications.
\newblock {\em Mathematical Methods in the Applied Sciences},
  40(18):6389--6410, 2017.

\bibitem{little2009model}
Mark~P Little, Anna Gola, and Ioanna Tzoulaki.
\newblock A model of cardiovascular disease giving a plausible mechanism for
  the effect of fractionated low-dose ionizing radiation exposure.
\newblock {\em PLoS computational biology}, 5(10):e1000539, 2009.

\bibitem{luria1943mutations}
Salvador~E Luria and Max Delbr{\"u}ck.
\newblock Mutations of bacteria from virus sensitivity to virus resistance.
\newblock {\em Genetics}, 28(6):491, 1943.

\bibitem{marinho2012model}
EBS Marinho, Flora~S Bacelar, and RFS Andrade.
\newblock A model of partial differential equations for hiv propagation in
  lymph nodes.
\newblock {\em Physica A: Statistical Mechanics and its Applications},
  391(1-2):132--141, 2012.

\bibitem{meerschaert2013mathematical}
Mark~M Meerschaert.
\newblock {\em Mathematical modeling}.
\newblock Academic press, 2013.

\bibitem{newton1774methodus}
Isaac Newton.
\newblock Methodus fluxionum et seriarum infinitarum.
\newblock {\em Opuscula mathematica, philosophica et philologica}, 1, 1774.

\bibitem{world2004international}
World~Health Organization.
\newblock {\em International statistical classification of diseases and related
  health problems}, volume~1.
\newblock World Health Organization, 2004.

\bibitem{ougrinovskaia2010ode}
Anna Ougrinovskaia, Rosemary~S Thompson, and Mary~R Myerscough.
\newblock An ode model of early stages of atherosclerosis: mechanisms of the
  inflammatory response.
\newblock {\em Bulletin of mathematical biology}, 72(6):1534--1561, 2010.

\bibitem{owen1999mathematical}
R~Owen and Jonathan~A Sherratt.
\newblock Mathematical modelling of macrophage dynamics in tumours.
\newblock {\em Mathematical Models and Methods in Applied Sciences},
  9(04):513--539, 1999.

\bibitem{paul2013mathematical}
Salomonsky Paul-Michael.
\newblock A mathematical system for human implantable wound model studies.
\newblock 2013.

\bibitem{ramezani2008combined}
Mehdi Ramezani, Mehdi Dehghan, and Mohsen Razzaghi.
\newblock Combined finite difference and spectral methods for the numerical
  solution of hyperbolic equation with an integral condition.
\newblock {\em Numerical Methods for Partial Differential Equations: An
  International Journal}, 24(1):1--8, 2008.

\bibitem{rockne2009mathematical}
R~Rockne, EC~Alvord, JK~Rockhill, and KR~Swanson.
\newblock A mathematical model for brain tumor response to radiation therapy.
\newblock {\em Journal of mathematical biology}, 58(4-5):561, 2009.

\bibitem{schichl2004models}
Hermann Schichl.
\newblock Models and the history of modeling.
\newblock In {\em Modeling languages in mathematical optimization}, pages
  25--36. Springer, 2004.

\bibitem{seo2007mice}
David~M Seo, Pascal~J Goldschmidt-Clermont, Mike West, et~al.
\newblock Of mice and men: Sparse statistical modeling in cardiovascular
  genomics.
\newblock {\em The Annals of Applied Statistics}, 1(1):152--178, 2007.

\bibitem{shen2011spectral}
Jie Shen, Tao Tang, and Li-Lian Wang.
\newblock {\em Spectral methods: algorithms, analysis and applications},
  volume~41.
\newblock Springer Science \& Business Media, 2011.

\bibitem{singh2015role}
Randhir Singh, Sushma Devi, and Rakesh Gollen.
\newblock Role of free radical in atherosclerosis, diabetes and dyslipidaemia:
  larger-than-life.
\newblock {\em Diabetes/metabolism research and reviews}, 31(2):113--126, 2015.

\bibitem{stephanou2006mathematical}
Ang{\'e}lique St{\'e}phanou, Steven~R McDougall, Alexander~RA Anderson, and
  Mark~AJ Chaplain.
\newblock Mathematical modelling of the influence of blood rheological
  properties upon adaptative tumour-induced angiogenesis.
\newblock {\em Mathematical and Computer Modelling}, 44(1-2):96--123, 2006.

\bibitem{terry2012mathematical}
Emmanuelle Terry, Jacqueline Marvel, Christophe Arpin, Olivier Gandrillon, and
  Fabien Crauste.
\newblock Mathematical model of the primary cd8 t cell immune response:
  stability analysis of a nonlinear age-structured system.
\newblock {\em Journal of mathematical biology}, 65(2):263--291, 2012.

\bibitem{yang2016mathematical}
Yifan Yang, Willi J{\"a}ger, Maria Neuss-Radu, and Thomas Richter.
\newblock Mathematical modeling and simulation of the evolution of plaques in
  blood vessels.
\newblock {\em Journal of mathematical biology}, 72(4):973--996, 2016.

\bibitem{yule1925growth}
Ge~Yule.
\newblock {\em The growth of population and the factors which control it}.
\newblock Harrison \& Sons, 1925.

\bibitem{zhao2017convergence}
Yanmin Zhao, Yadong Zhang, Fawang Liu, I~Turner, Yifa Tang, and V~Anh.
\newblock Convergence and superconvergence of a fully-discrete scheme for
  multi-term time fractional diffusion equations.
\newblock {\em Computers \& Mathematics with Applications}, 73(6):1087--1099,
  2017.

\end{thebibliography}
\end{document}